




\documentclass[11pt]{amsart}


\usepackage{amsmath, amsthm}
\usepackage{eucal}
\usepackage{palatino}
\usepackage{euler}
\usepackage{mathrsfs}
\usepackage{amssymb}
\usepackage{amscd}
\usepackage{latexsym}
\usepackage{epsfig}
\usepackage{graphicx}
\usepackage{amsfonts}
\usepackage{psfrag}
\usepackage{caption}
\usepackage{rotating}
\usepackage{mathtools}
\usepackage{fullpage}

\usepackage[normalem]{ulem}

\usepackage{pifont}
\usepackage{epsfig}

\usepackage{url}
\usepackage{cite}
\usepackage{dsfont}
\usepackage{array}

\input xy
\xyoption{all}
\UseComputerModernTips


\numberwithin{equation}{section}
\numberwithin{figure}{section}
\numberwithin{table}{section}


\newtheorem{theorem}[equation]{Theorem}

\newtheorem{lemma}[equation]{Lemma}
\newtheorem{proposition}[equation]{Proposition}
\newtheorem{cor}[equation]{Corollary}

\theoremstyle{definition}
\newtheorem{definition}[equation]{Definition}
\newtheorem{remark}[equation]{Remark}

\newtheorem{example}[equation]{Example}


\makeatletter
\let\c@equation\c@figure
\makeatother

\makeatletter
\let\c@table\c@figure
\makeatother


\newcommand{\C}{{\mathbb{C}}}
\newcommand{\Z}{{\mathbb{Z}}}
\newcommand{\Q}{{\mathbb{Q}}}
\newcommand{\R}{{\mathbb{R}}}
\newcommand{\T}{{\mathbb{T}}}
\renewcommand{\SS}{{\mathbb{S}}}
\newcommand{\id}{\mathds{1}}
\newcommand{\X}{{\mathcal{X}}}
\renewcommand{\to}{\longrightarrow}
\newcommand{\algt}{\mathfrak{t}}
\renewcommand{\iff}{\Leftrightarrow}
\newcommand{\into}{\hookrightarrow}
\newcommand{\FreeProd}{\mathop{\scalebox{1.75}{\raisebox{0ex}{$\ast$}}}}

\DeclareMathOperator{\rank}{rank}
\DeclareMathOperator{\coker}{coker}
\DeclareMathOperator{\spanspan}{span}


\begin{document}

\title[The fundamental group and Betti numbers of toric origami manifolds]{The fundamental group and Betti numbers\\ of toric origami manifolds}

\author{Tara S. Holm}
\thanks{Tara Holm was partially supported by Grant \#208975 from the Simons Foundation and NSF Grant DMS--1206466.}
\address{Department of Mathematics, Malott Hall, Cornell
  University, Ithaca, New York 14853-4201, USA}
\email{tsh@math.cornell.edu}
\urladdr{\url{http://www.math.cornell.edu/~tsh/}}

\author{Ana Rita Pires}
\address{Department of Mathematics, Malott Hall, Cornell
  University, Ithaca, New York 14853-4201, USA}
\email{apires@math.cornell.edu}
\urladdr{\url{http://www.math.cornell.edu/~apires/}}

\keywords{toric symplectic manifold, toric origami manifold,
Delzant polytope, origami template,
fundamental group,
Betti numbers,
cohomology}

\subjclass[2010]{Primary: 53D20; Secondary: 55N91, 57R91}

\date{\today}


\begin{abstract}
Toric origami manifolds are characterized by origami templates, which are combinatorial
models built by gluing polytopes together along facets.  In this paper, we examine the topology
of orientable toric oigami manifolds with co\"orientable folding hypersurface.  We determine the
fundamental group.    In our previous paper \cite{holm-pires}, we studied the ordinary and equivariant
cohomology rings of simply connected  toric origami manifolds.  We conclude this paper by computing
some Betti numbers in the non-simply connected case.
\end{abstract}

\maketitle

\setcounter{tocdepth}{1}
\tableofcontents

\renewcommand{\arraystretch}{1.2}

\section*{Introduction}\label{sec:intro}

Smooth toric varieties and their generalizations are manifolds whose geometry and topology
can be characterized by combinatorial models.  The interplay between geometry and topology
on the one hand and algebra, combinatorics, and discrete geometry on the other has
been integral to our understanding of toric varieties.  In this paper, we study toric origami
manifolds, a class of toric manifolds that arise in symplectic geometry.  The geometry
of toric origami manifolds is encoded in an origami template: a collection of (equi-dimensional)
polytopes with certain facets identified.  In our previous paper \cite{holm-pires}, we studied
the simplest examples of toric origami manifolds, the acyclic ones.  In this manuscript, we develop
new techniques to address the complications that arise in the cyclic case.

We first study the fundamental group of a toric origami manifold.  Building on work
of Masuda and Park \cite{masuda-park} and others, we use the combinatorics of the
origami template to determine the fundamental group of a toric origami manifold 
(Theorem~\ref{thm:fund group}).  
The key trick is to build a simply connected cover of the origami template.
As a consequence of our result, we may deduce that
a toric origami manifold is simply connected if and only if it is acyclic.
We can use our result (namely, the form of the fundamental group)
to show the existence of a $4$-dimensional manifold equipped
with an effective $\T^2$ action which is not a toric origami manifold (Remark~\ref{rmk:type L}).
We then turn to the Betti numbers of a toric origami manifold.  When $M$ is orientable, there
is a natural decomposition $M=M_+\cup M_-$, where $M_+\cap M_-\cong Z$ is the folding
hypersurface.   There are situations in which we have control of the cohomology groups
of $Z$ and $M_+\sqcup M_-\cong M\setminus Z$, which allows us to determine certain Betti numbers of $M$.  
In dimension $4$, we may determine all Betti numbers, and hence the Euler characteristic
(Theorem~\ref{toric origami dim4}).  Again, this allows us to rule out a possible toric origami
structure on a specific $4$-manifold which is known to admit an effective $\T^2$ action
(Remark~\ref{non-eg: euler}).

The results in this paper were developed simultaneously to those in the recent preprint
of Ayzenberg, Masuda, Park and Zeng \cite{AMPZ}.  Their techniques rely on the assumption
that proper faces of the origami template be acyclic.  With this hypothesis, the authors are, for
the most part, able to determine the ring structure in cohomology, in terms of equivariant 
cohomology.  Our results apply to all origami templates, but our cohomological results
are only about Betti numbers.

The remainder of the paper is organized as follows.
We outline the basic notions and notation in Section~\ref{sec:background}, and compute the 
fundamental group of a toric origami
manifold in Section~\ref{sec:fund gp}. In Section~\ref{sec:coh Y/B}, we derive some auxiliary
results that we then use in Section~\ref{sec:coh results} to determine
some of the cohomology groups of toric origami manifolds.  We enumerate all of the Betti numbers
in dimension $4$.  We conclude with the full details of an example in $6$ dimensions, showing
how our techniques are tractable even in higher dimensions, when faced with specific examples.

\subsection*{Acknowledgements}
We would like to thank Allen Hatcher, Allen Knutson, Nick Sheridan and Reyer Sjamaar for useful conversations.
Remarks~\ref{rmk:type L}  and \ref{non-eg: euler} resulted from discussions of the second author with Ana Cannas da Silva,
and we are grateful for the possibility to include them here.

\section{Origami manifolds}\label{sec:background}

This is a summary of the background and set-up described in 
our previous paper \cite[\S 2]{holm-pires}, where there are more examples and details.
We include it again here to set the notation.  There is one new item: toric origami manifolds with boundary, 
which are an ingredient in Section~\ref{sec:fund gp}.

\subsection{Symplectic manifolds.}\label{se:symplectic}
We begin with a very quick review of symplectic geometry, following \cite{ca:book}.
Let $M$ be a manifold equipped with a {\bf symplectic form} 
$\omega\in \Omega^2(M)$: that is, $\omega$ is closed ($d\omega = 0$) and non-degenerate.
In particular, the non-degeneracy condition implies that $M$ must be an even-dimensional manifold.  

Suppose
that a compact connected abelian Lie group $\T= (\SS^1)^n$ acts on $M$ preserving $\omega$.
The action is {\bf weakly Hamiltonian} if for every vector $\xi\in\algt$ in the Lie algebra 
$\algt$ of $\T$, the vector field
$$
\X_\xi(p) = \frac{d}{dt}\Big[ \exp (t\xi)\cdot p \Big] \bigg|_{t=0}
$$
is a {\bf Hamiltonian vector field}. That is, we require $\omega(\X_\xi, \cdot )$ to be an exact one-form\footnote{\, The one-form $\omega(\X_\xi, \cdot )$ is automatically closed because the action preserves $\omega$.}:
\begin{equation}\label{eq:mmap}
\omega(\X_\xi, \cdot ) = d\phi^\xi.
\end{equation}
Thus each $\phi^\xi$ is a smooth function on $M$ defined by the differential equation \eqref{eq:mmap}, so determined 
up to a constant.  Taking them together, we may define a {\bf moment map}
$$
\begin{array}{rcc}
\Phi: M  & \to & \algt^* \\
 p & \mapsto & \left(\begin{array}{rcl}
                \algt & \longrightarrow & \R \\
                \xi & \mapsto & \phi^\xi(p)
                \end{array}\right).
\end{array}
$$

The action is {\bf Hamiltonian} if the moment map $\Phi$ can be chosen to be a $\T$-invariant map. Atiyah and Guillemin-Sternberg have shown that when $M$ is a compact Hamiltonian $\T$-manifold, the image $\Phi(M)$ is a convex polytope,
and is the convex hull of the images of the fixed points $\Phi(M^{\T})$ \cite{Ati82, gu-st:convexity}.

For an {\bf effective}\footnote{ \, An action is effective if no non-trivial 
subgroup acts trivially.} Hamiltonian $\T$ action on $M$, 
$
\dim(\T)\leq \frac{1}{2}\dim(M).
$
We say that the action is {\bf toric} if this inequality is in fact an equality.  
A symplectic manifold $M$ with a toric Hamiltonian $\T$ action is called a {\bf symplectic toric manifold}.  
Delzant used the moment polytope to classify symplectic toric manifolds.  

A polytope $\Delta$ in $\R^n$ is {\bf simple} if there are $n$ edges incident to each vertex, and it is {\bf rational} 
if each edge vector  has rational slope: it lies in $\Q^n\subset \R^n$.  
A simple polytope is {\bf smooth at a vertex} if the $n$ primitive vectors parallel to the edges at the vertex span the lattice 
$\Z^n\subseteq\R^n$ over $\Z$. It is {\bf smooth} if it is smooth at each vertex.  
A simple rational smooth convex polytope is called a {\bf Delzant polytope}.
We may now state Delzant's result.

\begin{theorem}[Delzant \cite{Del88}]
There is a one-to-one correspondence
$$
\left\{\begin{array}{c}
\mbox{compact toric}\\
\mbox{symplectic manifolds}\\
\end{array}\right\}
\leftrightsquigarrow
\left\{\begin{array}{c}
\mbox{Delzant polytopes}
\end{array}\right\} ,
$$
up to equivariant symplectomorphism on
the left-hand side and affine equivalence on the right-hand side.
\end{theorem}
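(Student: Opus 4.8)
The plan is to prove the bijection by constructing the two directions and then matching the equivalence relations. \emph{From manifolds to polytopes.} Given a compact symplectic toric manifold $(M^{2n},\omega)$ with Hamiltonian $\T=(\SS^1)^n$-action and moment map $\Phi$, the Atiyah--Guillemin--Sternberg theorem already gives that $\Delta:=\Phi(M)$ is a convex polytope, the convex hull of $\Phi(M^\T)$. To see that $\Delta$ is Delzant I would work near a fixed point $p\in M^\T$ using the equivariant Darboux (Marle--Guillemin--Sternberg) normal form: a $\T$-invariant neighborhood of $p$ is equivariantly symplectomorphic to a neighborhood of $0$ in $(\C^n,\omega_{\mathrm{std}})$ on which $\T$ acts linearly with weights $\alpha_1,\dots,\alpha_n\in\algt^*$, and effectiveness together with the toric condition $\dim\T=\tfrac12\dim M$ forces $\alpha_1,\dots,\alpha_n$ to be a $\Z$-basis of the weight lattice. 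In these coordinates $\Phi(z)=\Phi(p)-\tfrac12\sum_i|z_i|^2\,\alpha_i$ up to sign conventions, so locally $\Delta$ is the cone $\Phi(p)+\mathrm{span}_{\ge 0}\{-\alpha_i\}$: exactly $n$ edges meet at each vertex and their primitive direction vectors span the lattice. That is precisely simplicity, rationality, and smoothness, so $\Delta$ is Delzant. This assignment is invariant under equivariant symplectomorphism, up to the automorphisms of $\algt^*$ induced by $\mathrm{Aut}(\T)$ and by translation of $\Phi$, which is exactly affine equivalence.

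\emph{From polytopes to manifolds.} For the reverse direction I would recall Delzant's construction. Writing $\Delta=\{x\in\R^n:\langle x,u_i\rangle\ge c_i,\ i=1,\dots,d\}$ with primitive inward normals $u_i\in\Z^n$, smoothness of $\Delta$ guarantees that $\Z^d\to\Z^n$, $e_i\mapsto u_i$, is surjective; exponentiating its kernel produces a subtorus $K\subset\T^d$, closed because the $u_i$ are rational, with $\T^d/K\cong\T^n$. One then performs symplectic reduction of $(\C^d,\omega_{\mathrm{std}})$, with its standard $\T^d$-action, by $K$ at the level of the $K$-moment map determined by the constants $c_i$. Simplicity of $\Delta$ makes this a regular value and smoothness makes the $K$-action on the zero level set free, so the reduced space $M_\Delta$ is a compact symplectic manifold; it carries a residual Hamiltonian $\T^d/K\cong\T^n$-action, and a direct computation identifies its moment image with $\Delta$. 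Hence every Delzant polytope is realized.

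\emph{Uniqueness, the main obstacle.} The real content is showing that any compact symplectic toric $(M,\omega,\Phi)$ with $\Phi(M)=\Delta$ is equivariantly symplectomorphic to $M_\Delta$. Over the open interior $\Delta^\circ$ the $\T$-action on $\Phi^{-1}(\Delta^\circ)$ is free and this preimage is connected, so the action--angle theorem identifies it, $\T$-equivariantly and symplectically, with $\Delta^\circ\times\T$ equipped with the standard form; the bundle is trivial since $\Delta^\circ$ is contractible, and the symplectic form is then pinned down, so $\Phi^{-1}(\Delta^\circ)$ and $\Phi_\Delta^{-1}(\Delta^\circ)$ are identified. The crux is to extend this identification across the preimages of the faces of $\Delta$, which I would do by induction on codimension: near an orbit over a face $F$, the equivariant normal form presents $M$ as $\T\times_{\T_F}(\text{a linear symplectic slice determined by the normals to }F)$, and the same local model appears in $M_\Delta$ because $F$ carries identical combinatorial data; one reconciles the germ of the already-built symplectomorphism with this model via an equivariant Moser isotopy and patches the pieces together with an invariant partition of unity. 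Keeping these extensions compatible on overlaps and controlling the isotopies equivariantly across all strata is the delicate step, and is where essentially all the work lies. Once it is carried out, the bijection follows, with affine equivalence on the polytope side matching equivariant symplectomorphism on the manifold side as noted above.
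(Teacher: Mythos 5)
The paper does not prove this statement: it is quoted as Delzant's classical theorem, with a citation to \cite{Del88}, and is used purely as background. So there is no proof in the paper to compare yours against; I can only assess your sketch on its own terms.

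Your outline is the standard one and is essentially correct in structure: Atiyah--Guillemin--Sternberg convexity plus the equivariant local normal form at fixed points to show the image is Delzant; Delzant's symplectic reduction of $\C^d$ by the subtorus $K=\ker(\T^d\to\T^n)$ to realize every Delzant polytope (with the small caveat that it is really the freeness of the $K$-action on the level set, guaranteed by the smoothness condition, that makes the level regular, rather than simplicity per se); and action--angle coordinates over the interior plus local models over the faces for uniqueness. The one place where the sketch, taken literally, would fail is the final gluing: you cannot ``patch the pieces together with an invariant partition of unity,'' since a convex combination of symplectomorphisms is not a symplectomorphism. Delzant's actual argument constructs local equivariant symplectomorphisms over an open cover of the polytope and then shows the obstruction to assembling them into a global one lives in a \v{C}ech cohomology group of a sheaf on $\Delta$ that vanishes because $\Delta$ is convex (hence contractible). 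Replacing your partition-of-unity step with that cohomological vanishing argument would make the uniqueness part sound.
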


\subsection{Origami manifolds.}
We now relax the non-degeneracy condition on $\omega$, following \cite{CGP:origami}.
A {\bf folded symplectic form} on a $2n$-dimensional manifold $M$ is a $2$-form
$\omega\in \Omega^2(M)$ that is closed ($d\omega = 0$),  
whose top power $\omega^n$ intersects the zero section
transversely on a subset $Z$
and whose restriction to points in $Z$ has maximal rank.
The transversality forces $Z$ to be a codimension $1$ embedded submanifold of $M$.  We call $Z$
the {\bf folding hypersurface} or {\bf fold}.

Let $i:Z\into M$ be the inclusion of $Z$ as a submanifold of $M$.
Our assumptions imply that $i^*\omega$ has a $1$-dimensional kernel on $Z$.
This line field is called the {\bf null foliation} on $Z$.
An {\bf origami manifold} is a folded
symplectic manifold $(M, \omega)$ whose null foliation
is fibrating: $Z\stackrel{\pi}{\to} B$ is a fiber bundle with orientable circle fibers
over a compact base $B$.
The form $\omega$ is called an {\bf origami form}
and the bundle $\pi$
is called the {\bf null fibration}. A diffeomorphism between two origami manifolds which intertwines the origami forms is called an {\bf origami-symplectomorphism}.
The definition of a Hamiltonian action only depends on $\omega$ being closed.  Thus, in the folded framework, we may
define moment maps and toric actions exactly as in Section~\ref{se:symplectic}.

An oriented origami manifold $M$ with fold $Z$ may be \textbf{unfolded} into a symplectic manifold as follows. 
Consider the closures of the connected components of $M\setminus Z$, a manifold with boundary
which consists of two copies of $Z$. 
We collapse the fibers of the null fibration by identifying the boundary points that are in the same fiber of the 
null fibration of each individual copy of $Z$. 
The result, $M_0:=(M\setminus Z) \cup B_1 \cup B_2$, is a (disconnected) smooth manifold that can be naturally endowed 
with a symplectic form which on $M_0\setminus (B_1 \cup B_2)$ coincides with the origami form on $M\setminus Z$. 
Because this can be achieved using symplectic cutting techniques, the resulting manifold $M_0$ is called the 
\textbf{symplectic cut space} (and its connected components the \textbf{symplectic cut pieces}), and the process 
is also called \textbf{cutting}. 
The symplectic cut space of a nonorientable origami manifold is the $\Z_2$-quotient 
of the symplectic cut space of its orientable double cover.

The cut space $M_0$ of an oriented origami manifold $(M,\omega)$ inherits a natural orientation. It is the orientation on $M_0$ induced from the orientation on $M$ that matches the symplectic orientation on the symplectic cut pieces corresponding to the subset of $M\setminus Z$ where $\omega^n>0$ and the opposite orientation on those pieces where $\omega^n<0$. In this way, we can associate a $+$ or $-$ sign to each of the symplectic cut pieces of an orientend origami manifold, as well as to the corresponding connected components of $M\setminus Z$.

\begin{remark}\label{rmk:orientable}
In this paper we restrict to origami manifolds whose fold is {\bf co\"orientable}: that is, the fold has an orientable neighborhood. Note that this not imply that the manifold is orientable. Indeed, for an orientable $M$, the condition that $\omega^n$ intersects the zero section transversally implies that the connected components of $M\setminus Z$ which are adjacent in $M$ have opposite signs. Since $M$ is connected, picking a sign for one connected component of $M\setminus Z$ determines the signs for all other components. As a consequence, an origami manifold  $M$ with co\"orientable fold is orientable if and only if it is possible to make such a global choice of signs for the connected components of $M\setminus Z$. 
\end{remark}

\begin{proposition}[\!\! {\cite[Props.\ 2.5 \&  2.7]{CGP:origami}}] \label{prop:model}
Let $M$ be a (possibly disconnected) symplectic manifold with a codimension two symplectic submanifold $B$ and a symplectic involution $\gamma$ of a tubular neighborhood $\mathcal{U}$ of $B$ which preserves $B$\footnote{\, In the nonco\"orientable case, the involution must satisfy  additional conditions, see \cite[Def.\ 2.23]{CGP:origami}. In the co\"orientable case, we have $B=B_1\cup B_2$ and the involution $\gamma$ maps a tubular neighborhood of $B_1$ to one of $B_2$ and vice versa.}.
Then there is an origami manifold $\widetilde{M}$ such that $M$ is the symplectic cut space of $\widetilde{M}$.  Moreover, this manifold is unique up to origami-symplectomorphism. 
\end{proposition}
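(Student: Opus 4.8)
The plan is to reverse the cutting procedure by a \emph{radial blow-up} of $M$ along $B$, the analogue, for a codimension-two symplectic submanifold, of the real oriented blow-up of $\C$ at the origin: there $(t,\theta)\mapsto te^{i\theta}$ pulls the standard form back to $t\,dt\wedge d\theta=\tfrac12 d(t^2 d\theta)$, a folded form on $\R\times\SS^1$ with fold $\{t=0\}$. First I would invoke the symplectic neighborhood theorem to identify $\mathcal{U}$ symplectically with a neighborhood of the zero section in a Hermitian line bundle $L\to B$; after choosing a connection $1$-form $\alpha$ on the unit circle bundle $\pi\colon S(L)\to B$ and a radial coordinate $r$, a Moser-type (\emph{minimal coupling}) argument puts the form into the normal form $\pi^*\omega_B+\tfrac12 d(r^2\alpha)$ on $\{0\le r<\epsilon\}$. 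In the co\"orientable case $B=B_1\sqcup B_2$, the involution $\gamma$ identifies the two local models --- matching the normal bundle of $B_2$ with the conjugate of that of $B_1$ --- so a single model suffices; the nonco\"orientable case reduces to this one via the orientable double cover.

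Next I would carry out the blow-up: delete the zero section from $\mathcal{U}$ and glue $(-\epsilon,\epsilon)\times S(L)$ onto the rest of $M\setminus B$ by sending $(t,q)$ with $0<|t|<\epsilon$ to the point over $\pi(q)$ at radius $r=|t|$, on the side determined by the sign of $t$ and using $\gamma$ when $t<0$. Since $t^2=r^2$, the closed $2$-form $\widetilde\omega:=\pi^*\omega_B+\tfrac12 d(t^2\alpha)$ agrees with the normal form on the overlaps, hence glues with $\omega|_{M\setminus B}$ to a global closed $2$-form on a smooth manifold $\widetilde M$. To see that $\widetilde M$ is origami, write $\widetilde\omega=\pi^*\omega_B+t\,dt\wedge\alpha+\tfrac12 t^2 d\alpha$: every surviving term of $\widetilde\omega^{\,n}$ carries exactly one factor $t\,dt\wedge\alpha$, so $\widetilde\omega^{\,n}=t\,\mu$ with $\mu=n\,(\pi^*\omega_B)^{n-1}\wedge dt\wedge\alpha+O(t^2)$ nowhere zero near $Z:=\{t=0\}\times S(L)$; thus $\widetilde\omega^{\,n}$ vanishes transversally along $Z$. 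On $Z$ we have $i^*\widetilde\omega=\pi^*\omega_B$, whose kernel is the vertical tangent line of $S(L)\to B$; this line field fibrates, with orientable circle fibers over the compact base $B$. So $(\widetilde M,\widetilde\omega)$ is an origami manifold with fold $Z$ and null fibration $\pi$.

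It remains to identify the cut space of $\widetilde M$ and to prove uniqueness. Cutting $\widetilde M$ collapses the circle fibers of $Z$ on each side and is the identity away from $Z$; this re-glues the zero section, so the cut space is $(M\setminus B)\cup B=M$, and by the normal form of the first step the cut symplectic form is the original $\omega$. For uniqueness, suppose $\widetilde M'$ is another origami manifold with cut space $M$. Then $\widetilde M'\setminus Z'$ is symplectomorphic to $M\setminus B$, and a neighborhood of the fold $Z'$ is determined, through cutting, by the symplectic neighborhood of $B$ in $M$ together with $\gamma$; the uniqueness clause of the symplectic neighborhood theorem and a Moser argument then produce an origami-symplectomorphism $\widetilde M\to\widetilde M'$ intertwining the two origami forms.

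I expect the main obstacle to be the first step: arranging the given ambient symplectic form so that it matches the minimal-coupling model in which the blow-up can be performed, together with the parallel Moser argument needed for uniqueness. Once the local model is in hand, the blow-up and the verification that $\widetilde M$ is an origami manifold reduce to the local computations sketched above.
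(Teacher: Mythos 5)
The paper does not prove this proposition; it is imported verbatim from Cannas da Silva--Guillemin--Pires \cite{CGP:origami} (Props.\ 2.5 and 2.7), so there is no in-paper proof to compare against. Your proposal is a correct outline of exactly the construction used there: the radial blow-up of $M$ through $(\gamma,B)$ built from the minimal-coupling normal form, with the transversal vanishing of $\widetilde\omega^{\,n}$ along $\{t=0\}$ and the identification of the null fibration with $S(L)\to B$ verified as you compute, and with uniqueness handled (as you anticipate) by a Moser-type argument showing independence of the blow-up model.
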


This newly-created fold $Z\subset\widetilde{M}$ involves the radial projectivized normal bundle of $B\subset M$, so we call the origami manifold $\widetilde{M}$ the \textbf{radial blow-up} of  $M$ through $(\gamma,B)$. 
The cutting operation and the radial
blow-up operation  are in the following sense inverse to each other.

\begin{proposition}[\!\! {\cite[Prop.\ 2.37]{CGP:origami}}]
Let $M$ be an origami manifold with cut space $M_0$.  The radial blow-up
$\widetilde{M_0}$ is origami-symplectomorphic to $M$.
\end{proposition}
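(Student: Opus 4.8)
The plan is to reduce the statement to a local assertion in a tubular neighborhood of the fold and then to invoke the uniqueness clause of Proposition~\ref{prop:model}. Away from the fold $Z\subset M$ (equivalently, away from the collapsed base $B\subset M_0$), both the cutting operation and the radial blow-up operation are the identity: the open manifolds $M\setminus Z$, $M_0\setminus B$, and $\widetilde{M_0}\setminus\widetilde Z$ are canonically one and the same symplectic manifold. Hence all the content is concentrated in an arbitrarily small $\T$-invariant tubular neighborhood $\mathcal U$ of $Z$, and it suffices to produce an origami-symplectomorphism there that is the identity near $\partial\mathcal U$.

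First I would recall the normal form for a neighborhood of the fold from \cite{CGP:origami}: a neighborhood of $Z$ in $M$ is origami-symplectomorphic to $Z\times(-\varepsilon,\varepsilon)$, with fold $Z\times\{0\}$, equipped with an explicit folded symplectic form determined by the null fibration $\pi\colon Z\to B$ and a choice of principal connection, and restricting on $Z\times\{0\}$ to $\pi^*\omega_B$ for the symplectic form $\omega_B$ on $B$. In this model the cutting operation is transparent: collapsing the null circles on each of the two sides $Z\times(0,\varepsilon)$ and $Z\times(-\varepsilon,0)$ turns them into the two pieces of a tubular neighborhood of $B_1\sqcup B_2$ in $M_0$, each the total space of a disk bundle over $B$ with $B_i$ as zero section; and the radial blow-up of this pair through the natural involution is, by construction, exactly $Z\times(-\varepsilon,\varepsilon)$ with its fold reinstated as the radial projectivized normal bundle. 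Checking that cut-then-blow-up returns the identity on the model, compatibly with the collar near $\partial\mathcal U$, is then a direct unwinding of the two definitions.

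To globalize, I would verify that the pair $(M_0,B)$ together with the involution $\gamma$ produced by cutting $M$ satisfies the hypotheses of Proposition~\ref{prop:model}: $B=B_1\sqcup B_2$ is a codimension-two symplectic submanifold of $M_0$ (immediate from the cut construction), and $\gamma$ — the fiberwise antipodal map in the normal circle directions, interchanging the neighborhoods of $B_1$ and $B_2$ — is a symplectic involution preserving $B$, because the model form is even in the collar coordinate. Consequently both $M$ and $\widetilde{M_0}$ are origami manifolds whose symplectic cut space is $(M_0,\gamma,B)$, and the uniqueness statement in Proposition~\ref{prop:model} yields an origami-symplectomorphism $\widetilde{M_0}\cong M$.

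The main obstacle I anticipate is not the formal patching but making the local identification genuinely canonical: one must check that the data recorded by the cutting of $M$ is \emph{precisely} the involution and submanifold used to define $\widetilde{M_0}$ (not merely conjugate to them) and that the radial coordinate reconstructed by the blow-up matches the original collar coordinate up to the reparametrizations the normal form allows — this is where the Moser-type argument underlying Proposition~\ref{prop:model} does the real work. A minor additional point is the non-co\"orientable case, which is handled by running the co\"orientable argument $\T$-equivariantly on the orientable double cover and descending along the $\Z_2$-action, as in \cite{CGP:origami}.
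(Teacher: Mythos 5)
The paper you are working from does not prove this statement at all: it is quoted verbatim from \cite[Prop.\ 2.37]{CGP:origami} and used as a black box, so there is no in-paper argument to compare yours against. Judged on its own terms, your sketch is a reasonable reconstruction and follows the strategy one would expect from \cite{CGP:origami}: trivial identification away from the fold, a Moser-type normal form $Z\times(-\varepsilon,\varepsilon)$ near the fold in which cutting and radial blow-up are explicit, and then an appeal to the uniqueness clause of Proposition~\ref{prop:model}. The one point where you should be careful is the logical weight you place on that uniqueness clause. As stated, ``this manifold is unique up to origami-symplectomorphism'' could mean either (a) any origami manifold whose cut space realizes the data $(M_0,\gamma,B)$ is origami-symplectomorphic to the blow-up, or (b) merely that the blow-up construction is independent of the auxiliary choices (tubular neighborhood, connection, radial coordinate) made in building it. Under reading (a) your final step is a genuine formal deduction once you have checked that cutting $M$ produces exactly $(M_0,\gamma,B)$; under reading (b) the final step is close to restating the proposition, and the real content must be carried by your local normal-form computation, i.e.\ you must exhibit the origami-symplectomorphism on the model neighborhood directly and glue it to the identity outside. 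You flag this issue yourself in your last paragraph, which is the right instinct, but a complete write-up would need to commit to one of the two routes and supply the corresponding details (in particular, the normal-form theorem for folds and the verification that the cut involution is even in the collar coordinate). Your remark on the non-co\"orientable case is consistent with how the paper treats that case elsewhere, though note that the paper under review restricts attention to co\"orientable folds anyway (Remark~\ref{rmk:orientable}).
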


There exist Hamiltonian versions of these two operations which may be used to see that the moment map $\Phi$ for an origami manifold $M$ coincides, on each connected component of $M \setminus Z$ with the induced moment map $\Phi_i$  on the corresponding symplectic cut piece $M_i$. As a result, the moment image $\Phi(M)$ is the union of convex polytopes $\Delta_i$.

Furthermore, 
if the circle fibers of the null fibration for a connected component  $\mathcal{Z}$ of the fold $Z$ are orbits for a circle subgroup $\SS^1\subset \T$,
then  
$\Phi(\mathcal{Z})$ is a facet of each of the two polytopes corresponding to neighboring components of $M\setminus Z$.
Let us denote these  two polytopes $\Delta_1$ and $\Delta_2$.  We note that they must \textbf{agree} near $\Phi(\mathcal{Z})$: there is a neighborhood $\mathcal{V}$ of $\Phi(\mathcal{Z})$ in $\R^n$ such that $\Delta_1\cap\mathcal{V}=\Delta_2\cap\mathcal{V}$. The condition that the circle fibers are orbits 
is automatically satisfied when the action is toric, and in that case there is a classification theorem in terms of the moment data.

The moment data of a toric origami manifold can be encoded in the form of an origami template{, originally defined in~\cite[Def.\ 3.12]{CGP:origami}. Definition~\ref{def:template} below is a refinement of that original definition.  
Following~\cite[p.\ 5]{book:graph}, a \textbf{graph} $G$ consists of a nonempty set $V$ of \textbf{vertices} and a set $E$ of \textbf{edges} together with an incidence relation  that associates an edge with its two \textbf{end vertices}, which need not be distinct. Note that this allows for the existence of (distinguishable) multiple edges with the same two end vertices, and of \textbf{loops} whose two end vertices are equal.
We introduce some additional notation: let $\mathcal{D}_n$ be the set of all Delzant polytopes in $\R^n$ and $\mathcal{E}_n$ the set of all subsets of $\R^n$ which are facets of elements of $\mathcal{D}_n$.  

\begin{definition}\label{def:template}
An $n$-dimensional \textbf{origami template} consists of a graph $G$, called the \textbf{template graph}, and a pair of maps $\Psi_V: V\to\mathcal{D}_n$ and $\Psi_E:E\to\mathcal{E}_n$ such that:
\begin{enumerate}
\item if $e$ is an edge of $G$ with end vertices $u$ and $v$, then $\Psi_E(e)$ is a facet of each of the polytopes $\Psi_V(u)$ and  $\Psi_V(v)$, and these polytopes agree near $\Psi_E(e)$; and

\item if $v$ is an end vertex of each of the two distinct edges $e$ and $f$, then $\Psi_E(e)\cap\Psi_E(f)=\emptyset$.
\end{enumerate}
\end{definition}

The polytopes in the image of the map $\Psi_V$ are the Delzant polytopes of 
the symplectic cut pieces. For each edge $e$, the set $\Psi_E(e)$ is a facet 
of the polytope(s) corresponding to the end vertices of $e$. We refer to such 
a set as a {\bf fold facet}, as it is the image of the connected components of 
the folding hypersurface\footnote{ \, A nonco\"orientable  connected component 
of the folding hypersurface corresponds to a loop edge $e$.}.

With these combinatorial data in place, we may now state the classification theorem.

\begin{theorem}[\!\! {\cite[Theorem 3.13]{CGP:origami}}]\label{thm:origamiDelzant}
There is a one-to-one correspondence
$$
\left\{\begin{array}{c}
\mbox{compact toric}\\
\mbox{origami manifolds}
\end{array}\right\}
\leftrightsquigarrow
\left\{\begin{array}{c}
\mbox{origami templates}
\end{array}\right\} ,
$$
up to equivariant origami-symplectomorphism on the left-hand side,
and affine equivalence of the image of the template in $\mathbb{R}^n$ on the right-hand side.
\end{theorem}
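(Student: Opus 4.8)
The plan is to produce two mutually inverse assignments between the two sets and to check that each respects the relevant equivalence relation; both directions are built by applying Delzant's theorem piece by piece, together with the cutting and radial blow-up operations (Proposition~\ref{prop:model} and its converse \cite[Prop.\ 2.37]{CGP:origami}) and their Hamiltonian refinements. \emph{From a manifold to a template:} given a compact toric origami manifold $(M,\omega)$ with its toric $\T$-action, apply the Hamiltonian version of cutting along $Z$ to obtain the symplectic cut space $M_0=\bigsqcup_i M_i$. Each cut piece $M_i$ is a compact connected symplectic manifold carrying an effective Hamiltonian $\T$-action and the dimension count $\dim\T=\tfrac12\dim M_i$ is preserved, so $M_i$ is a compact symplectic toric manifold; by Delzant's theorem it corresponds to a Delzant polytope $\Delta_i\subset\R^n$, canonically up to affine equivalence, and (as recalled before the statement) the moment map of $M$ restricted to the component of $M\setminus Z$ attached to $M_i$ is the moment map of $\Delta_i$. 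Let $G$ have one vertex $v_i$ per cut piece and one edge per connected component $\mathcal Z$ of $Z$, with end vertices the piece(s) adjacent to $\mathcal Z$; set $\Psi_V(v_i)=\Delta_i$ and $\Psi_E(\mathcal Z)=\Phi(\mathcal Z)$. The discussion preceding the statement gives that $\Phi(\mathcal Z)$ is a facet of each adjacent $\Delta_i$ and that these polytopes agree near it, which is axiom (1); axiom (2), disjointness of two fold facets incident to a common vertex of $G$, follows because distinct components of $Z$ are disjoint embedded submanifolds of $M$, so a point lying in $\Psi_E(e)\cap\Psi_E(f)$ would be a contradiction after passing to the local normal form of the moment map near a fold. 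Loop edges record the nonco\"orientable fold components, completing the template.

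\emph{From a template to a manifold:} conversely, given an origami template $(G,\Psi_V,\Psi_E)$, form the disjoint union $M^{\mathrm{cut}}=\bigsqcup_v M_{\Psi_V(v)}$ of the Delzant toric manifolds attached to the vertices. For an edge $e$ with end vertices $u,v$, the facet $\Psi_E(e)$ of $\Psi_V(u)$ has moment-map preimage a codimension-two symplectic toric submanifold $B_e^u\subset M_{\Psi_V(u)}$, and similarly $B_e^v$; since $\Psi_V(u)$ and $\Psi_V(v)$ agree near $\Psi_E(e)$, Delzant's construction supplies a $\T$-equivariant symplectomorphism between tubular neighborhoods of $B_e^u$ and $B_e^v$ (for a loop edge, an involution of a neighborhood of a single $B_e$ covering a deck transformation). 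Axiom (2) forces the various $B_e^v$ over a fixed vertex to be pairwise disjoint, so these data assemble into one codimension-two symplectic submanifold $B\subset M^{\mathrm{cut}}$ equipped with a symplectic involution $\gamma$ of a tubular neighborhood exactly as required by Proposition~\ref{prop:model}; that proposition then produces an origami manifold $\widetilde M$ with cut space $M^{\mathrm{cut}}$, unique up to origami-symplectomorphism. The $\T$-actions on the pieces are intertwined by $\gamma$, so they glue to an effective Hamiltonian toric $\T$-action on $\widetilde M$, and the null fibration of each new fold is the radial projectivized normal circle bundle of the corresponding $B_e$, fibering over the compact base $B_e$ with orientable circle fibers; hence $\widetilde M$ is a compact toric origami manifold.

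It remains to see that the two assignments are mutually inverse and descend to the stated equivalences. Starting from $M$, cutting and then radially blowing up gives back an origami manifold origami-symplectomorphic to $M$ by \cite[Prop.\ 2.37]{CGP:origami}, and equivariantly so since every step is $\T$-equivariant; starting from a template, cutting the manifold just constructed returns $\bigsqcup_v M_{\Psi_V(v)}$, whose Delzant polytopes are the original $\Psi_V(v)$ by the uniqueness clause of Delzant's theorem, while the edges and incidences are recovered from the folds, so the original template is recovered up to affine equivalence. For the equivalences: an equivariant origami-symplectomorphism $M\to M'$ restricts to equivariant symplectomorphisms of corresponding cut pieces and sends fold components to fold components, hence induces an isomorphism of template graphs and, by the affine-uniqueness part of Delzant's theorem, a single affine transformation of $\R^n$ carrying one template to the other; conversely an affine equivalence of templates yields, piece by piece, equivariant symplectomorphisms of the cut manifolds compatible with the gluing data $(\gamma,B)$, which by the uniqueness statement in Proposition~\ref{prop:model} assemble into an equivariant origami-symplectomorphism of the radial blow-ups.

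The main obstacle is the bookkeeping that makes the template axioms match the geometric constraints precisely. In the forward direction one must verify that axiom (2) is genuinely forced by $Z$ being an embedded submanifold with fibrating null foliation --- a purely local assertion that one reduces to the standard normal form of a folded symplectic form along a fold, together with the Hamiltonian local model near a fold facet. In the reverse direction one must check that ``agreement near a facet'' produces an honest $\T$-equivariant symplectic identification of tubular neighborhoods, so that the involution $\gamma$ feeding Proposition~\ref{prop:model} exists \emph{and} commutes with the torus action, and that the resulting null foliation really is a circle fibration over a compact base with orientable fibers. Once these local compatibilities are in place, everything else is a vertex-by-vertex and facet-by-facet application of Delzant's theorem and of the cut/blow-up correspondence.
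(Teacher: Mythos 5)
This theorem is not proved in the paper: it is quoted verbatim from \cite[Theorem 3.13]{CGP:origami}, so there is no internal proof to compare against. Your outline is essentially the argument of that reference --- cut along $Z$ and apply Delzant vertex-by-vertex in one direction, radially blow up the disjoint union of Delzant manifolds via Proposition~\ref{prop:model} in the other, then invoke \cite[Prop.\ 2.37]{CGP:origami} and the uniqueness clauses for the two compositions --- and you correctly identify the substantive work as the local normal-form checks (equivariance of $\gamma$, the tubular-neighborhood identification coming from ``agreement near a fold facet'', and the disjointness forced by axiom (2)), so as a sketch it is sound and matches the standard proof.
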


For the purposes of this paper, we need to work with toric origami manifolds with a certain type of boundary.  
We begin by defining templates with boundary.
To do so, we now allow our graph $G$ to have {\bf dangling edges}: that is, an edge that has only
one endpoint.  Note that this is different from a loop edge.

\begin{definition}\label{def:template with boundary}
An $n$-dimensional \textbf{origami template with boundary} consists of a graph $G$, possibly including dangling edges, called the 
\textbf{template graph}, and a pair of maps $\Psi_V: V\to\mathcal{D}_n$ and $\Psi_E:E\to\mathcal{E}_n$ satisfying
the conditions (1) and (2) in Definition~\ref{def:template}.  
\end{definition}

\begin{remark}
Note that condition (1) of Definition~\ref{def:template} does not impose a constraint on a dangling edge, but condition
(2) may do so.
\end{remark}

To define the {\bf toric origami manifold with boundary} associated to an origami template with boundary, we may use a 
construction motivated by Theorem~\ref{thm:origamiDelzant}.  In this way, the boundary of the origami manifold is
contained in the fold.  More specifically, the component of the boundary corresponding to the dangling edge $e$ 
is a principal circle bundle over the toric symplectic manifold with moment image $\Psi_E(e)$.
If we collapse the circle fibers of this fibration, we obtain a toric origami manifold, possibly with boundary, with the
dangling edge $e$ removed from the template graph.

This is not the most general definition of a toric origami manifold with boundary, but it is the version that we will need
in the remainder of the paper.

The orbit space $X=M/\T $ of a toric origami manifold, possibly with boundary, 
is closely related to the origami template. When $M$ is a toric symplectic manifold, 
then the orbit space may be identified with the corresponding Delzant polytope; this identification is achieved by the moment map.
For a toric origami manifold, possibly with boundary, 
the orbit space is realized as the topological space obtained by gluing the polytopes in $\Psi_V(V)$ 
along the fold facets as specified by the map $\Psi_E$. More precisely, the orbit space is the quotient
\begin{equation}\label{eq:orbitspace}
X = \bigsqcup_{v\in V} (v,\Psi_V(v)) \Big/ \thicksim \ ,
\end{equation}
 where we identify $(u,x)\thicksim(v,y)$ if there exists an edge $e$ with endpoints $u$ and $v$ and the points $x=y\in\Psi_E(e)\subset \R^n$.
 Again, this identification is achieved by the moment map.
In simple low-dimensional examples, we can visualize the orbit space by superimposing the polytopes $\Psi_V(v)$ in 
$\mathbb{R}^n$ and indicating which of their facets to identify.
There is a deformation retraction from orbit space $X$ to the template graph.

There is a natural description of the faces of $X$. The  facets of a polytope are well-understood. The set of facets of $X$ is
$$
\bigsqcup_{\mathclap{\substack{v\in V \\ F \text{ facet of } \Psi_V(v)\\ F \text{ not a fold facet}}}} \,(v,F) \Big/ \thicksim \ ,
$$

\vskip 0.05in

\noindent where the equivalence relation is induced by the one in (\ref{eq:orbitspace}). The faces of $X$ are non-empty 
intersections of facets in $X$, together with $X$ itself.  This notion of face of the orbit space agrees with Masuda and 
Panov's definition \cite[{\S 4.1}]{masuda-panov}.

\section{The fundamental group of toric origami manifolds}\label{sec:fund gp}

We now proceed to compute the fundamental group of a toric origami manifold $M$.  As our manifolds are always
connected, we suppress the notation of a basepoint.  Key to this calculation are two lattices that arise in the 
definition of $M$ by its origami template.

\begin{definition}\label{def:NandNX}
The Delzant polytopes are subsets of $\R^n$, and the Delzant condition refers to a fixed choice of
lattice $N=\Z^n\subset \R^n$.  An important sublattice of $N$ is $N_X$, the sublattice spanned (over $\Z$)
by the normal vectors to the facets of $X=M/\T$.
\end{definition}

\noindent
Masuda and Park have investigated the relationship between the fundamental group of $M$, that
of $X$, and $N/N_X$.

\begin{proposition}[{\cite[Proposition~3.1]{masuda-park}}]\label{prop:masudapark}
Let $M$ be an orientable toric origami manifold and let $N_X$ be as in Definition~\ref{def:NandNX}.  Let $q_*\colon \pi_1(M)\to \pi_1(X)$ be the homomorphism induced from the quotient map $q\colon M\to X$.
Then there is an epimorphism
\[
\rho\colon N/N_X \times \pi_1(X) \to \pi_1(M)
\]
such that the composition $q_*\circ \rho \colon N/N_X \times \pi_1(X)\to \pi_1(X)$ is the projection on the second factor, in particular,
$\ker(\rho)$ is contained in $N/N_X$.
\end{proposition}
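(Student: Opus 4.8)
The plan is to compare $M$ with the free locus of the $\T$-action. Write $\partial X\subset X$ for the union of the facets of $X$ (the images of the non-fold facets of the template polytopes), and set $X^\circ=X\setminus\partial X$ and $M^\circ=q^{-1}(X^\circ)$. I would first pin down $\pi_1(M^\circ)$, then show that $\pi_1(M^\circ)\to\pi_1(M)$ is surjective with $N_X$ inside its kernel, and finally read off $\rho$ and its two asserted properties.

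\emph{Identifying $\pi_1(M^\circ)$.} By the local structure of a toric origami manifold, a point of $X$ carries non-trivial $\T$-isotropy exactly when it lies on some facet of $X$, so the $\T$-action is free on $M^\circ$ and $q\colon M^\circ\to X^\circ$ is a principal $\T$-bundle. Since $X$ deformation retracts onto the template graph, and the inclusion $\iota\colon X^\circ\into X$ is a homotopy equivalence ($X$ is locally a manifold with corners, so one may push in from a collar of $\partial X$), the space $X^\circ$ is homotopy equivalent to a graph; in particular $\iota_*\colon\pi_1(X^\circ)\to\pi_1(X)$ is an isomorphism and $H^2(X^\circ;\Z^n)=0$. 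Principal $\T$-bundles over $X^\circ$ are classified by $H^2(X^\circ;\pi_1(\T))=H^2(X^\circ;\Z^n)=0$, so $M^\circ\to X^\circ$ is trivial; a trivialization $M^\circ\cong\T\times X^\circ$ furnishes an isomorphism $\pi_1(M^\circ)\cong N\times\pi_1(X)$ under which $N=\pi_1(\T)$ is the class of an orbit and, via $\iota_*$, the homomorphism induced by $q|_{M^\circ}$ becomes the projection onto the $\pi_1(X)$ factor.

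\emph{Producing $\rho$.} The complement $M\setminus M^\circ=q^{-1}(\partial X)$ is a finite union of submanifolds of codimension at least two (the top stratum $q^{-1}(F^\circ)$, $F$ a facet, has codimension two), so general position makes the inclusion $j\colon M^\circ\into M$ induce a surjection $j_*\colon\pi_1(M^\circ)\twoheadrightarrow\pi_1(M)$. Along the preimage of the relative interior of a facet $F$, with primitive normal $v_F\in N$, the manifold $M$ is modelled on $\C\times(\T^{n-1}\times\R^{n-1})$ with the circle $\exp(\R v_F)\subset\T$ rotating the $\C$-factor; there an orbit of this circle through a nearby free point is a loop $\theta\mapsto(e^{i\theta}z_0,\ast)$ with $z_0\ne 0$, which bounds a disk in $M$, so $j_*(v_F)=1$. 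The $v_F$ lie in the abelian subgroup $N$ and by definition generate $N_X$, whence $N_X\subseteq\ker j_*$, and $j_*$ descends to an epimorphism $\rho\colon\pi_1(M^\circ)/N_X\cong(N/N_X)\times\pi_1(X)\to\pi_1(M)$. Since $\iota\circ q|_{M^\circ}=q\circ j$, the previous paragraph gives that $q_*\circ j_*$ is the projection onto $\pi_1(X)$; passing to the quotient by $N_X$, the composition $q_*\circ\rho$ is the projection on the second factor. Finally, if $\rho(\bar n,g)=1$ then $g=q_*\rho(\bar n,g)=1$, so $\ker\rho\subseteq N/N_X$.

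The step I expect to be the real work is the geometric input: verifying that the $\T$-action is free precisely over $X^\circ$ and that $M$ has the stated normal form $\C\times(\T^{n-1}\times\R^{n-1})$ along facet preimages --- this is what legitimizes both the triviality of $M^\circ\to X^\circ$ (hence the clean form of $\pi_1(M^\circ)$) and the relations $j_*(v_F)=1$ --- together with checking that $\iota$ is a homotopy equivalence so that $X^\circ$ is homotopy equivalent to the template graph. Everything downstream (the bundle classification, the transversality surjectivity, the bookkeeping with $N_X\subseteq N$) is then formal. One could in fact identify $\ker j_*$ with $N_X$ by a van Kampen argument over a neighbourhood of $\partial X$, upgrading $\rho$ to an isomorphism, but only the weaker statement above is asserted here.
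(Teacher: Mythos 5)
The paper does not prove this proposition at all: it is quoted verbatim from Masuda--Park \cite[Proposition~3.1]{masuda-park}, so there is no in-paper argument to compare against. Your blind proof is correct and is essentially the standard argument (and, as far as I can tell, the one in the cited source): restrict to the free locus $M^\circ=q^{-1}(X^\circ)$, observe that the principal $\T$-bundle over $X^\circ$ is trivial because $X^\circ$ is homotopy equivalent to the template graph and hence has vanishing $H^2$, get $\pi_1(M^\circ)\cong N\times\pi_1(X)$ with $q_*$ the projection, and then use that the complement of $M^\circ$ has codimension two so that $j_*$ is onto, while the local normal form along each facet preimage kills the corresponding primitive normal $v_F$, hence all of $N_X$. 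The two geometric inputs you flag as ``the real work'' are indeed the only points needing care, and both hold: the isotropy is trivial over the relative interiors of fold facets (the radial blow-up replaces the codimension-two isotropy locus $B$ by the freely-acted-on circle bundle $Z$), and pushing in along a collar of the non-fold facets gives the homotopy equivalence $X^\circ\simeq X$. So your write-up is a legitimate self-contained proof of the quoted statement.
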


\begin{remark}\label{rmk:pi1 for trivial}
As Masuda and Park note, $N/N_X$ is trivial, finite cyclic or infinite cyclic.
When $N/N_X$ is trivial, then $\rho$ is an isomorphism.
\end{remark}

We aim to show that $\rho$ is an isomorphism.
We now introduce several auxiliary spaces that will allow us to identify $\pi_1(M)$.
Let $\widetilde{X}$ denote the universal cover  of the orbit space $X$.  
Let $\widetilde{M}$ be the toric origami manifold corresponding to $\widetilde{X}$.
Note that $\widetilde{M}$ is non-compact unless the original template graph is a tree.  Then there is a covering map
$V: \widetilde{M}\to M$ and an injection $V_*: \pi_1( \widetilde{M})\to \pi_1(M)$.

Choose a fundamental domain $\mathcal{D}$ for the action of the deck transformations on $\widetilde{X}$ and consider its closure $\widetilde{X}_0=\overline{\mathcal{D}}$ inside $\widetilde{X}$. This has template graph $\widetilde{G}_0$, a spanning tree of the 
original template graph $G$
for $M$, together with some extra dangling edges. More explicitly, for every edge $e$ in $G$ that is not in the spanning tree, 
there are now two dangling edges in $\widetilde{G}_0$, one emanating from each end vertex of $e$.
The manifold $\widetilde{M}_{0}$ is an origami manifold with boundary. Note that in the same way that $X$ can be recovered from $\widetilde{X}_0$ by gluing along some of the facets, and $G$ may be recovered from $\widetilde{G}_0$ by splicing the 
dangling edges described above, the origami manifold $M$ can be recovered from $\widetilde{M}_{0}$ by 
appropriately identifying boundary components to each other.

Recall that the fundamental group of $X$ is a free group $F_k$, since $X$ deformation retracts to the template graph.
The Cayley graph of the free group $F_k$ is an infinite regular tree of degree $2k$.  We may think of $\widetilde{X}$
in terms of this infinite tree, where each vertex represents a copy of $\widetilde{X}_0$ and the edges represent the facets
by which the copies of $\widetilde{X}_0$ are glued together.  We introduce auxiliary spaces $\widetilde{X}_i$, for
$i\geq 0$, which consist of the $\frac{(2k)^{i+1}-1}{2k-1}$ copies of $\widetilde{X}_0$ that are distance at
most $i$ from the ``identity" copy of $\widetilde{X}_0$ in the Cayley graph of $F_k$.  We then may define the
origami manifold with boundary $\widetilde{M}_i$ to have template with boundary $\widetilde{X}_i$.

We note that the spaces $\widetilde{X}_i$ and the spaces $\widetilde{M}_i$ are nested.  
That is, we have a commutative diagram
$$
\xymatrix{
M \ar[d] & \widetilde{M} \ar[l]_(.6){\mathrm{cover}}\ar@<-2.5ex>[d] \supseteq \cdots &  \widetilde{M}_2\ar@{}[l]|-*[@]{\supseteq} \ar[d] & \widetilde{M}_1\ar@{}[l]|-*[@]{\supseteq}\ar[d] & \widetilde{M}_0\ar@{}[l]|-*[@]{\supseteq}\ar[d] \\
X& \widetilde{X} \ar[l]_(.6){\mathrm{univ.}}^(.6){\mathrm{cover}} \supseteq \cdots &  \widetilde{X}_2\ar@{}[l]|-*[@]{\supseteq} & \widetilde{X}_1\ar@{}[l]|-*[@]{\supseteq}& \widetilde{X}_0.\ar@{}[l]|-*[@]{\supseteq}
}
$$
We also include a figure showing $X$, $\widetilde{X}_0$, $\widetilde{X}_1$ and 
$\widetilde{X}_2$.
\begin{center}
\begin{figure}[h]
\includegraphics[width=0.7in]{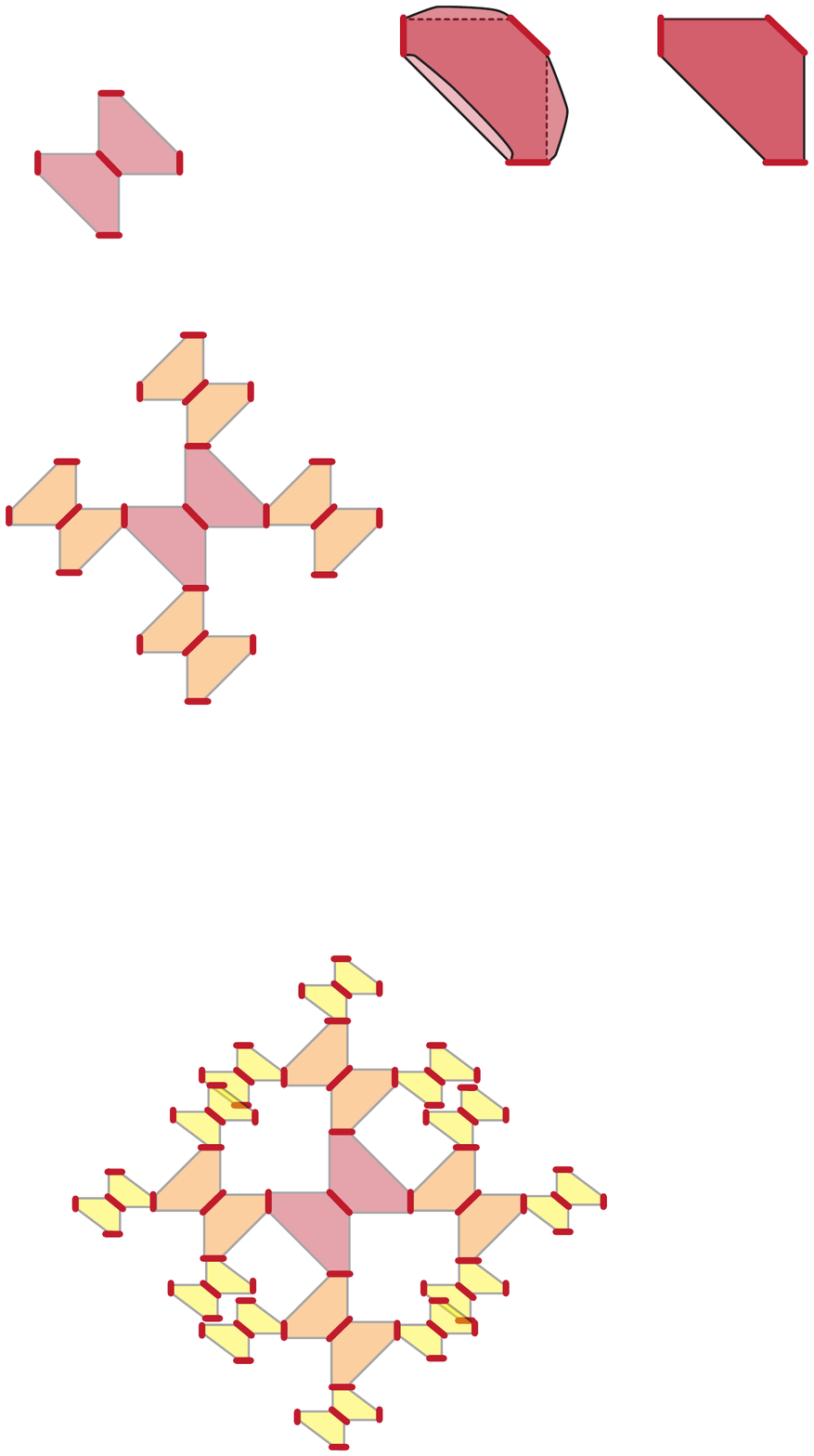} \hskip 0.2in
\includegraphics[width=0.85in]{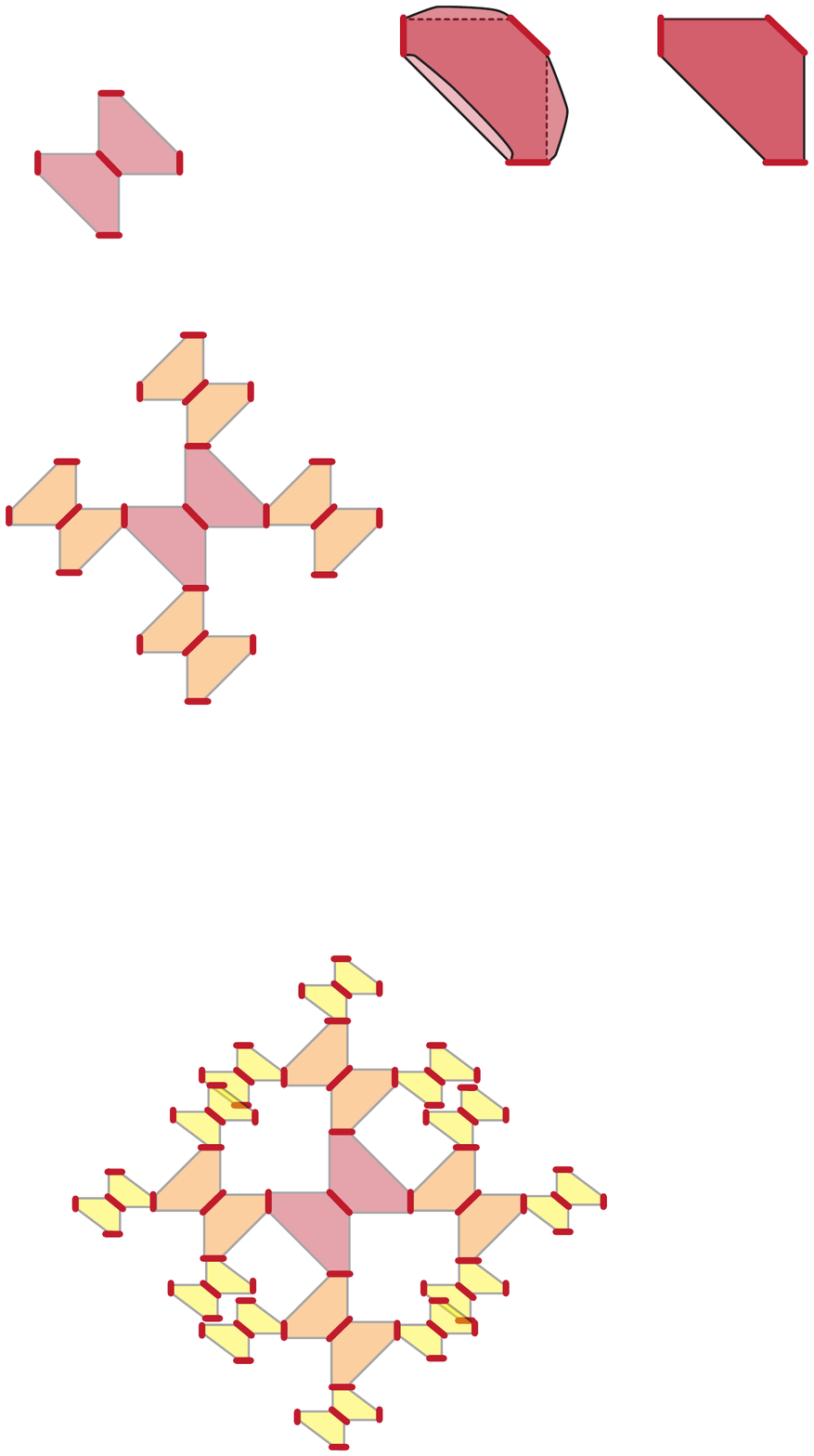} \hskip 0.2in
\includegraphics[width=1.1in]{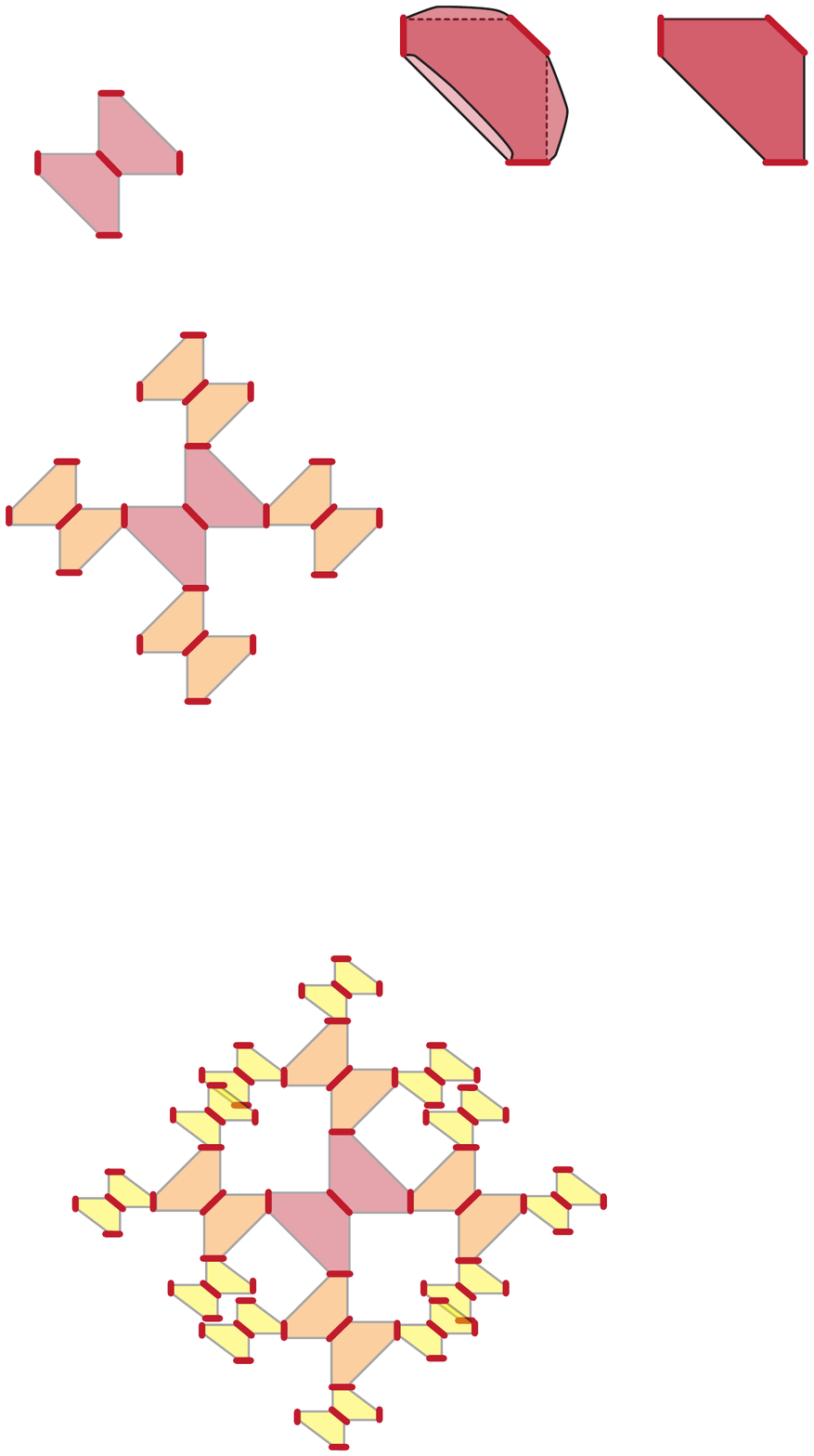} \hskip 0.2in
\includegraphics[width=1.65in]{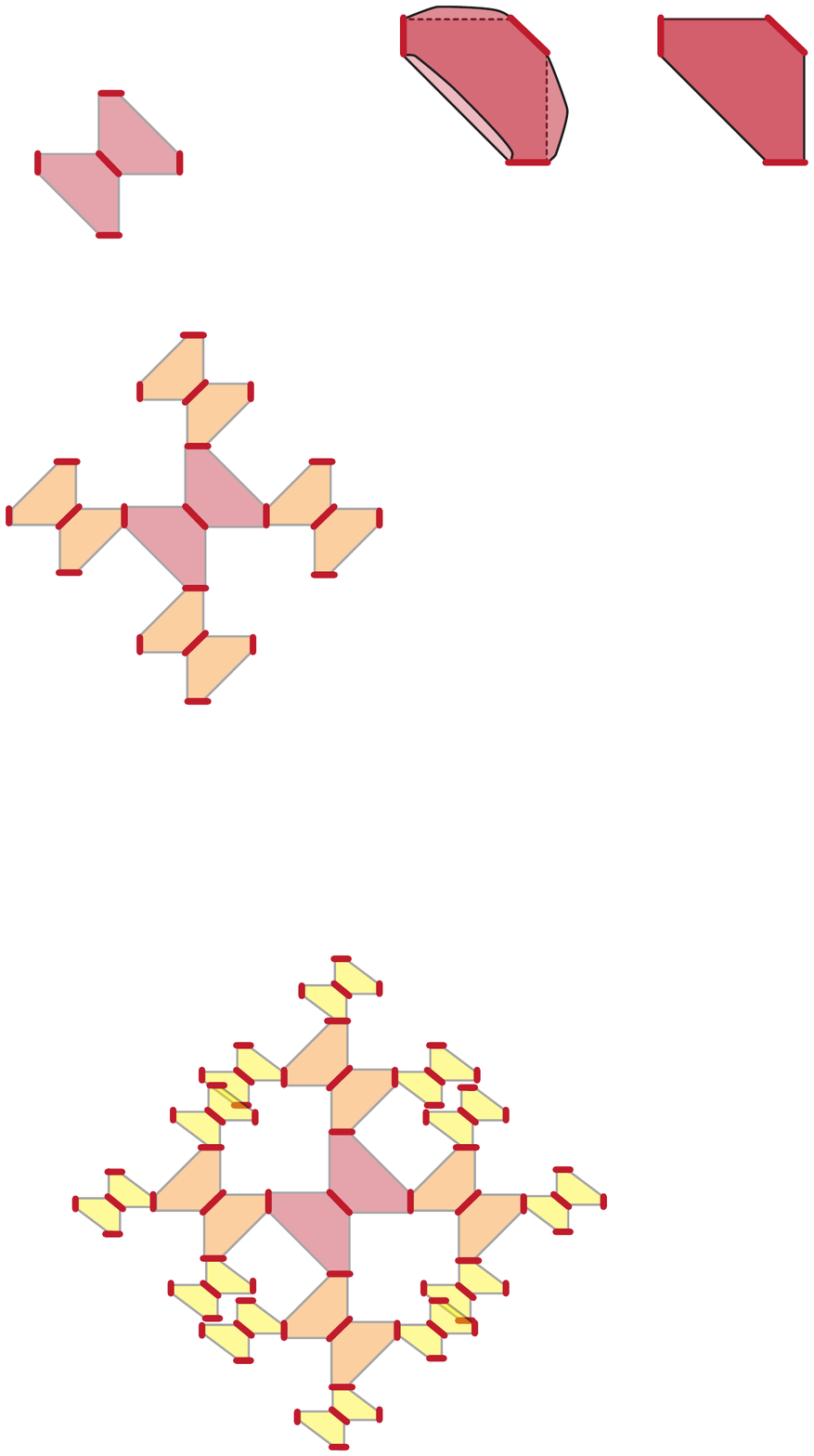} 
\caption{
From left to right: 
the moment map image of a toric origami manifold $M$ whose template graph has two vertices and three edges 
(the polytopes corresponding to the two vertices are identical and appear superimposed); and representations of 
$\widetilde{X}_0$, $\widetilde{X}_1$ and $\widetilde{X}_2$, drawn ``unfolded'' and with shrinking polytopes to 
prevent too many overlaps. The red ``boundary" facets correspond to the dangling edges of the template graph.
The moment images of each of these 3 spaces looks like the leftmost figure, but their templates are all different.}
\end{figure}
\end{center}

We now wish to compute the fundamental groups $\pi_1(\widetilde{M}_i)$ and $\pi_1(\widetilde{M})$.
Danilov computed the fundamental group of a normal toric variety associated to a fan \cite[Proposition~9.3]{danilov};
a detailed proof is given in \cite[Theorem 12.1.10]{coxlittleschenck}.

\begin{theorem}\label{thm:coxlittleschenck}
Let $\Sigma$ be a fan in $N_\mathbb{R}$ and let $N_\Sigma$ be the sublattice of $N$ generated by $|\Sigma|\cap N$. Then 
the fundamental group of the normal toric variety $X_\Sigma$ is $\pi_1(X_\Sigma)\cong N/ N_\Sigma$.
\end{theorem}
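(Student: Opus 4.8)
The plan is to reduce everything to the affine toric varieties $U_\sigma$ and then glue with the Seifert--van Kampen theorem; the affine computation is carried out by splitting off a torus factor and contracting what remains.

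\emph{Step 1: the affine pieces.} For $\sigma\in\Sigma$ put $N_\sigma=\langle\sigma\cap N\rangle$. I would first observe that $N_\sigma$ is saturated, i.e.\ $N_\sigma=(\spanspan_\R\sigma)\cap N=:N_1$: since $\sigma$ is full-dimensional in $V:=\spanspan_\R\sigma$, the subgroup $N_\sigma$ has finite index $d$ in $N_1$, and if $d>1$ then scaling a relative-interior point of $\sigma$ produces an arbitrarily large ball inside $\sigma$ which must meet every coset of $N_\sigma$ in $N_1$, contradicting $\sigma\cap N\subseteq N_\sigma$. As $N_1$ is saturated we may split $N=N_1\oplus N''$, whence $U_\sigma\cong U_\sigma'\times T_{N''}$ with $U_\sigma'$ the affine toric variety of $\sigma$ viewed as a full-dimensional cone in $(N_1)_\R$ and $T_{N''}\cong(\C^*)^{\rank N''}$. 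Choosing a one-parameter subgroup $\lambda$ in the relative interior of $\sigma$ and embedding $U_\sigma'\into\C^r$ by semigroup generators $\chi^{m_1},\dots,\chi^{m_r}$ of $\sigma^\vee\cap M_1$, the flow $\lambda(t)$ scales the $i$-th coordinate by $t^{\langle m_i,\lambda\rangle}$ with each exponent a positive integer; hence $(p,t)\mapsto\lambda(t)\cdot p$ extends continuously at $t=0$ by $p\mapsto x_\sigma$ and gives a strong deformation retraction of $U_\sigma'$ onto the distinguished torus-fixed point $x_\sigma$. Therefore $\pi_1(U_\sigma)\cong\pi_1(T_{N''})\cong N''\cong N/N_1=N/N_\sigma$, and the inclusion $T_N\into U_\sigma$ induces on $\pi_1$ precisely the quotient homomorphism $N\to N/N_\sigma$ (it is the identity on the $N''$-summand and kills $N_1$, which lands in the simply connected $U_\sigma'$).

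\emph{Step 2: gluing.} Now $X_\Sigma=\bigcup_\sigma U_\sigma$ is an open cover whose every finite intersection $U_\sigma\cap U_\tau\cap\cdots=U_{\sigma\cap\tau\cap\cdots}$ contains the dense torus $T_N$ and is thus path-connected, so the van Kampen theorem (in the form valid for arbitrary such covers) presents $\pi_1(X_\Sigma)$ as $\FreeProd_\sigma\pi_1(U_\sigma)$ modulo the relations that identify, for every pair $\sigma,\tau$, the two images of $\pi_1(U_{\sigma\cap\tau})$. Taking the basepoint in $T_N$ (so $\pi_1(T_N)=N$) and using Step 1, each $\pi_1(U_\sigma)$ is a quotient of $N$ and every relation is the image of a loop coming from $T_N$; chasing these through, the presentation collapses to
\[
\pi_1(X_\Sigma)\;\cong\;N\big/\!\sum_{\sigma\in\Sigma}N_\sigma\;=\;N/N_\Sigma,
\]
the last equality holding because $\bigcup_\sigma(\sigma\cap N)$ generates $\sum_\sigma N_\sigma$ and also generates $N_\Sigma=\langle\,|\Sigma|\cap N\,\rangle$.

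\emph{The main obstacle} is the bookkeeping in Step 2: one must verify carefully that the pairwise-intersection relations, together with the surjections $\pi_1(T_N)\twoheadrightarrow\pi_1(U_\sigma)$, really do force $\pi_1(X_\Sigma)$ to be the single quotient $N/\sum_\sigma N_\sigma$ with no further collapsing — the groupoid formulation of van Kampen is the cleanest way to see this, and it also accommodates an infinite fan, where one needs only that the cover consists of path-connected opens with path-connected finite intersections. By contrast, Step 1 is routine: once written in semigroup coordinates the contraction is immediate, the only point needing a word being continuity of the retraction as $t\to 0^+$.
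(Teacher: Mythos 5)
Your proof is correct, but note that the paper does not actually prove this statement --- it is quoted from Danilov and from Cox--Little--Schenck [Theorem 12.1.10], and your argument (contract each affine chart onto its distinguished point after splitting off a torus factor using the saturation of $N_\sigma$, then assemble with Seifert--van Kampen) is essentially the proof given in that reference. The one step worth writing out in full is the collapse of the van Kampen presentation to a single quotient of $N$, which works precisely because $\pi_1(T_N)=N$ surjects onto $\pi_1(U_{\sigma\cap\tau})$ for every pair of cones, so the amalgamation relations identify all the images of $N$ and the colimit is $N$ modulo the subgroup generated by all the kernels $N_\sigma$.
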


\noindent We  begin by computing $\pi_1(\widetilde{M}_i)$.

\begin{lemma}
Let $M$ be an origami manifold, possibly with boundary, such that the orbit space $X$ is simply connected
(or equivalently, the template graph $G$ of $M$ is a tree).  
Let $N_X$ be the sublattice of $N$ generated by the rays of the multi-fan corresponding to $M$.
Then the fundamental group
$\pi_1(M) \cong N/N_X$.
\end{lemma}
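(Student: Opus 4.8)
The plan is to reduce the statement to Danilov's computation of the fundamental group of a toric variety (Theorem~\ref{thm:coxlittleschenck}) by building a suitable fan out of the origami template. First I would observe that when the template graph $G$ is a tree, the orbit space $X$ is simply connected, and $M$ is obtained from the symplectic cut pieces $\Psi_V(v)$ by a sequence of radial blow-ups along the codimension-two symplectic submanifolds sitting over the fold facets. The key geometric input is that a radial blow-up of a toric manifold along a toric submanifold (in the sense of Proposition~\ref{prop:model}) does not change the fundamental group: away from the exceptional fold $Z$, the radial blow-up $\widetilde{M}$ is diffeomorphic to two copies of $M\setminus B$ glued along a circle bundle, and a Mayer--Vietoris/van Kampen argument shows $\pi_1(\widetilde{M})\cong\pi_1(M)$ since the circle fiber that is being added bounds a disk on each side (the normal disk to $B$). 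So the fundamental group of $M$ equals the fundamental group of any one of the cut pieces glued appropriately — more precisely, of the toric variety built from the ``multi-fan'' one gets by overlaying the normal fans of all the $\Psi_V(v)$.

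Concretely, I would associate to the template a (possibly non-complete, possibly non-convex, but still genuine) fan $\Sigma$ in $N_\R$ whose maximal cones are the vertex cones of all the Delzant polytopes $\Psi_V(v)$, noting that the compatibility condition~(1) in Definition~\ref{def:template} together with the tree hypothesis guarantees these cones fit together consistently into a fan (this is where acyclicity of $G$ is essential: a cycle in $G$ could force an inconsistency). Then $|\Sigma|\cap N$ generates exactly the sublattice $N_X$ of Definition~\ref{def:NandNX}, because the rays of $\Sigma$ are precisely the inward normals to the non-fold facets of the $\Psi_V(v)$, which are exactly the normals to the facets of $X$, and one checks that the lattice points of a union of simplicial smooth cones are generated by the rays. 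The van Kampen argument from the previous paragraph, applied inductively over the edges of the tree $G$ (starting from a single vertex polytope, whose associated toric manifold has $\pi_1 \cong N/N_X$ by Delzant's theorem together with the classical fact that a smooth compact toric variety with fan having full-dimensional support is simply connected, and more generally Theorem~\ref{thm:coxlittleschenck}), then identifies $\pi_1(M) \cong N/N_\Sigma = N/N_X$.

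For the version with boundary, I would note that collapsing the circle fibers over a dangling-edge fold facet is, on the level of fundamental groups, the same operation as a one-sided radial blow-up: attaching a collar that is a circle bundle over a toric symplectic manifold, where again the circle fiber bounds a normal disk, so $\pi_1$ is unchanged. Thus the boundary components may be deleted one at a time without affecting $\pi_1(M)$, reducing to the closed case already handled, and the sublattice $N_X$ is unaffected since a fold facet contributes no normal ray to $X$.

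The main obstacle I expect is the van Kampen bookkeeping at the gluing step: one must verify carefully that the circle fiber of the null fibration, viewed inside each adjacent cut piece, is null-homotopic there — this is true because it is the boundary of a normal disk to the codimension-two submanifold $B$, but making the open-set decomposition of $M$ precise (neighborhoods of $Z$ versus the two sides) and checking that all intersections are connected with the expected $\pi_1$ requires some care. A secondary subtlety is justifying that the overlaid vertex cones genuinely form a fan (face-to-face intersection) when $G$ is a tree; this follows from condition~(1) of Definition~\ref{def:template} propagated along the tree, but it should be stated explicitly.
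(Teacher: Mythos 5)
Your overall skeleton --- induct over the tree, apply the Seifert--van Kampen theorem at each gluing, and feed the base cases into Danilov's theorem (Theorem~\ref{thm:coxlittleschenck}) --- matches the paper's proof, but the step that carries all the content is justified incorrectly. You claim that gluing cut pieces along the fold does not change $\pi_1$ ``since the circle fiber that is being added bounds a disk on each side (the normal disk to $B$).'' That normal disk lives in the cut piece $M_i$, which contains $B_i$; it is \emph{not} contained in $M_i\setminus B_i$, which is the space actually appearing in the van Kampen decomposition of the radial blow-up. Removing a codimension-two toric submanifold genuinely changes $\pi_1$ (e.g.\ $\C P^1\setminus\{0,\infty\}=\C^*$ has $\pi_1=\Z$), so the fiber class need not die on either side, and $\pi_1$ of the glued manifold is an honest amalgamated product. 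The paper handles this by computing $\pi_1$ of the tubular neighborhood of the fold component as $N/N_{A\cap B}$ (that neighborhood is itself a toric variety, so Danilov applies) and then verifying the group-theoretic identity $N/N_{X_1}\,{\FreeProd}_{N/N_{A\cap B}}\, N/N_{X_2}\cong N/(N_{X_1}+N_{X_2})$; your proposal has no substitute for this computation. The same error sinks your treatment of the boundary case: collapsing the circle fibers over a dangling-edge facet \emph{does} change $\pi_1$ (it kills the fiber class) and \emph{does} change $N_X$ (the facet stops being a fold facet and contributes its normal ray to $N_X$). The simplest example already refutes the proposed reduction: the template with one vertex, polytope $[0,1]$, and two dangling edges gives the annulus $\SS^1\times[0,1]$ with $\pi_1=\Z=N/N_X$ (here $N_X=0$), whereas collapsing both boundary circles yields $\SS^2$ with trivial fundamental group.

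A second, independent gap is the assertion that the vertex cones of all the $\Psi_V(v)$ ``fit together consistently into a fan'' when $G$ is a tree. They do not in general: each compact polytope contributes a complete normal fan (minus the cones meeting the fold-facet rays), and cones coming from different polytopes can overlap in $N_\R$ without meeting along common faces --- this is precisely why the subject works with \emph{multi-fans} rather than fans, and why the lemma itself is phrased in terms of the multi-fan of $M$. Consequently Theorem~\ref{thm:coxlittleschenck} cannot be applied to your overlay $\Sigma$. The paper only ever invokes that theorem for genuine fans (the piece attached to a single polytope, and the tubular neighborhood of a single fold component) and assembles the global answer through the amalgamated-product calculation above.
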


\begin{proof}
We proceed by induction on the number of vertices in the template graph  $G$.  
The base case is when there is a single vertex.
Then the manifold $M$ is a symplectic manifold (possibly with boundary), and the corresponding multi-fan
is in fact a fan $\Sigma$.   Then $M$ is homeomorphic the normal toric variety $X_\Sigma$, 
and the result is a direct application of Theorem~\ref{thm:coxlittleschenck}.

For the induction step, we pick a leaf vertex $v$ of $G$.  
Denote the vertex set of $G$ by $V$ and the edge set $E$.
Given the leaf vertex $v$, let $e$ be the edge that connects it to the rest of $G$ and $f_1, f_2,\ldots$ the (possibly empty) list 
of dangling edges emanating from $v$. Let $\text{Star}(v)$ be the graph with a single vertex $v$ and dangling edges 
$\tilde{e},f_1,f_2,\ldots$, where $\tilde{e}$ is the new dangling edge obtained from $e$.  Next, let $\text{Star}(V\setminus v)$
be the graph with vertex set $V\setminus \{ v\}$ and edge set $E\setminus \{ e,f_1,f_2,\dots,\}\cup \{\tilde{e}\}$, 
where $\tilde{e}$ is the new dangling edge obtained from $e$. 

We now describe a cover of $M$ with two open sets.  The first set, $A$, is a small
neighborhood in $M$ of the toric origami manifold with boundary $M_1$ whose template graph is $\text{Star}(v)$  and
orbit space is $X_1$.
We may choose $A$ so that it deformation retracts to $M_1$.  By Theorem~\ref{thm:coxlittleschenck}, $\pi_1(A)\cong N/N_{X_1}$.
The second set, $B$, is a small
neighborhood in $M$ of the toric origami manifold with boundary $M_2$ whose template graph is $\text{Star}(V\setminus v)$ and
orbit space is $X_2$.
We may choose $B$ so that it deformation retracts to $M_2$. By the induction hypothesis, $\pi_1(B)\cong N/N_{X_2}$.

We note that the intersection $A\cap B$ is the tubular neighborhood of the connected component $Z$ of the fold $\mathcal{Z}$
corresponding to the edge $e$.  It is homeomorphic to the toric variety whose fan has rays the normals to the facets in the polytope
$\Psi_V(v)$ that are adjacent to the fold facet  $\Psi_E(e)$.  Thus, we may apply Theorem~\ref{thm:coxlittleschenck} to deduce that
$\pi_1(A\cap B) \cong N/N_{A\cap B}$, where $N_{A\cap B}$ is the sublattice of $N$ spanned by the rays  described in the 
previous sentence.

We may apply the Seifert--van Kampen Theorem to deduce that
$$
\pi_1(M) \cong N/N_{X_1} {\FreeProd}_{N/N_{A\cap B}} N/N_{X_2}.
$$
As in  \cite[Proof of Theorem 12.1.10]{coxlittleschenck}, the final step is to use presentations of the groups 
$N/N_{X_1}$, $N/N_{X_2}$ and $N/N_{A\cap B}$ in terms of
generators and relations to conclude that 
$$
N/N_{X_1} {\FreeProd}_{N/N_{A\cap B}} N/N_{X_2}= N/(N_{X_1}+N_{X_2}) = N/N_X.
$$
This completes the proof.
\end{proof}

\noindent
We may now compute $\pi_1(\widetilde{M}_i)$.

\begin{cor}\label{cor:M_i}
Let $\widetilde{M}_i$ be the origami manifold with boundary with orbit space $\widetilde{X}_i$, as described
above.  For each $i\geq 0$, the fundamental group is $\pi_1(\widetilde{M}_i)\cong N/N_{\widetilde{X}_i}= N/N_X$.
\end{cor}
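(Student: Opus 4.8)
The plan is to apply the preceding Lemma to each $\widetilde{M}_i$ directly and then to check that the sublattice $N_{\widetilde{X}_i}$ does not depend on $i$ and in fact equals $N_X$. First I would observe that the template graph $\widetilde{G}_i$ of $\widetilde{M}_i$ is a tree: by construction, $\widetilde{X}_i$ sits inside the universal cover $\widetilde{X}$ of $X$, which is simply connected, so its template graph is a tree, and $\widetilde{G}_i$ is a finite connected subgraph (with some dangling edges attached), hence still a tree. Therefore the Lemma applies verbatim and gives $\pi_1(\widetilde{M}_i)\cong N/N_{\widetilde{X}_i}$, where $N_{\widetilde{X}_i}$ is the sublattice of $N$ spanned by the rays of the multi-fan of $\widetilde{M}_i$, equivalently the sublattice generated by the primitive inward normal vectors to the (non-fold) facets of the orbit space $\widetilde{X}_i$.

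The remaining point is the equality $N_{\widetilde{X}_i}=N_X$. For this I would use that the covering map $\widetilde{X}\to X$, and the gluing map $\widetilde{X}_i \to X$ obtained by collapsing dangling edges and identifying facets, are built out of the template data of $X$ itself: each polytope $\Psi_V(u)$ appearing as a vertex-label of $\widetilde{X}_i$ is (an affine translate of) one of the finitely many Delzant polytopes $\Psi_V(v)$ of $X$, and its facet normals are exactly the facet normals of the corresponding polytope of $X$. Since $\widetilde{X}_0$ already contains a copy of a spanning tree's worth of polytopes of $X$ — indeed it contains a copy of every vertex polytope of $G$, as $\widetilde{G}_0$ is a spanning tree of $G$ with extra dangling edges — the normals occurring in $\widetilde{X}_0$ already generate the same sublattice as all the normals occurring in $X$. (Passing from facet of a polytope to ``fold facet'' only removes normals, but a fold facet of $X$ is a genuine facet of some adjacent vertex polytope, so its normal is still picked up; conversely every normal used to define $N_X$ comes from a facet of the orbit space, hence from a facet of one of the vertex polytopes, all of which appear in $\widetilde{X}_0$.) Thus $N_{\widetilde{X}_0}=N_X$, and since $\widetilde{X}_0\subseteq\widetilde{X}_i$ for all $i$ we get $N_X = N_{\widetilde{X}_0}\subseteq N_{\widetilde{X}_i}$; the reverse inclusion holds because every polytope occurring in $\widetilde{X}_i$ is still a copy of a vertex polytope of $G$, so contributes no new normals. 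Hence $N_{\widetilde{X}_i}=N_X$ for all $i\geq 0$, which combined with the Lemma gives $\pi_1(\widetilde{M}_i)\cong N/N_{\widetilde{X}_i}=N/N_X$.

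I expect the only real subtlety — the ``main obstacle,'' though it is minor — to be the bookkeeping in the claim that the sublattice generated by facet normals is insensitive to which copies of the vertex polytopes one includes and to the deletion of fold facets. This needs a careful statement that (a) the template graph of $\widetilde{X}_i$ is a tree so the Lemma genuinely applies, (b) up to the lattice-preserving affine identifications used in the gluing, the set of facet normals of $\widetilde{X}_i$ is contained in the set of facet normals of $X$, and (c) conversely $\widetilde{X}_0$ already realizes every vertex polytope of $G$ (because $\widetilde{G}_0$ is a spanning tree of $G$), so no normal is lost. Once these three points are in place the corollary follows immediately; no further computation is required.
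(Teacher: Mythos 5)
Your proof is correct and follows essentially the same route as the paper, whose entire argument is to invoke the preceding Lemma (the template graph of $\widetilde{X}_i$ is a tree) and observe that $N_{\widetilde{X}_i}=N_X$ for each $i$; your version merely spells out why every vertex polytope of $G$ already appears in $\widetilde{X}_0$ and hence contributes the same facet normals. One small caution on your parenthetical: by Definition~\ref{def:NandNX} the normals to fold facets are \emph{not} among the generators of $N_X$ (fold facets are excluded from the facets of the orbit space), but since exactly the same exclusion applies to $N_{\widetilde{X}_i}$ this does not affect your conclusion.
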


\begin{proof}
The only missing ingredient is to notice that $N_{\widetilde{X}_i}= N_X$ for each $i$.
\end{proof}

\noindent 
Now we  may compute $\pi_1(\widetilde{M})$.

\begin{cor}\label{cor:tilde M}
Let $\widetilde{M}$ be the toric origami manifold with boundary with orbit space $\widetilde{X}$, as described
above. The fundamental group is $\pi_1(\widetilde{M})\cong  N/N_X$.
\end{cor}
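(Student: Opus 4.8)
The plan is to realize $\widetilde{M}$ as an increasing union of the nested pieces $\widetilde{M}_i$ and then pass to a direct limit of fundamental groups. First I would observe that $\widetilde{X} = \bigcup_{i\geq 0} \widetilde{X}_i$, since every copy of $\widetilde{X}_0$ in the Cayley tree of $F_k$ sits at some finite distance from the identity copy, and that correspondingly $\widetilde{M} = \bigcup_{i\geq 0} \widetilde{M}_i$ as an ascending union of open (or compactly-contained-in-the-next) subsets. Since $\{\widetilde{M}_i\}$ is an ascending chain of subspaces covering $\widetilde{M}$ and any compact subset of $\widetilde{M}$ — in particular the image of a loop $S^1\to\widetilde{M}$ or of a homotopy $S^1\times[0,1]\to\widetilde{M}$ — is contained in some $\widetilde{M}_i$, the standard colimit argument gives $\pi_1(\widetilde{M}) \cong \varinjlim_i \pi_1(\widetilde{M}_i)$, with respect to the homomorphisms induced by the inclusions $\widetilde{M}_i \hookrightarrow \widetilde{M}_{i+1}$.

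Next I would identify these transition maps. By Corollary~\ref{cor:M_i}, each $\pi_1(\widetilde{M}_i)\cong N/N_X$, with the isomorphism natural in the sense that it is induced by the lattice data of the multi-fan, which is the same for every $\widetilde{X}_i$ (this is exactly the content of the observation $N_{\widetilde{X}_i}=N_X$). The inclusion $\widetilde{M}_i\hookrightarrow\widetilde{M}_{i+1}$ is compatible with these identifications — one can see this by tracking through the Seifert--van Kampen construction in the preceding Lemma, or more directly by noting that $\widetilde{M}_i$ deformation retracts onto a sub-origami-manifold realizing the same multi-fan rays, so the induced map $N/N_X = \pi_1(\widetilde{M}_i)\to\pi_1(\widetilde{M}_{i+1}) = N/N_X$ is the identity. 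Hence the direct system is $N/N_X \xrightarrow{\;\mathrm{id}\;} N/N_X \xrightarrow{\;\mathrm{id}\;} \cdots$, whose colimit is $N/N_X$, and therefore $\pi_1(\widetilde{M})\cong N/N_X$.

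The main obstacle is making precise that the inclusion-induced maps $\pi_1(\widetilde{M}_i)\to\pi_1(\widetilde{M}_{i+1})$ really are the identity under the chosen isomorphisms, rather than merely isomorphisms of abstract groups of the correct type (recall from Remark~\ref{rmk:pi1 for trivial} that $N/N_X$ is trivial, finite cyclic, or infinite cyclic, so there could in principle be nontrivial automorphisms, though in the cyclic cases these cannot change the colimit). The cleanest route is to observe that each generator of $\pi_1(\widetilde{M}_i)$ coming from an element of $N$ is represented by a loop supported in (a neighborhood of) the toric-variety piece over a single polytope $\Psi_V(v)$ in the identity copy of $\widetilde{X}_0$ — a piece common to all the $\widetilde{M}_i$ — so its class is literally carried to the corresponding class in $\widetilde{M}_{i+1}$. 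A secondary, more routine point is the general-topology fact that $\pi_1$ commutes with the colimit of an expanding sequence of subspaces; this is standard once one checks that every loop and every null-homotopy has compact image, which holds here because $S^1$ and $S^1\times[0,1]$ are compact and the $\widetilde{M}_i$ form an open cover (or an exhaustion by sets each contained in the interior of the next) of $\widetilde{M}$.
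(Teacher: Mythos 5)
Your proposal is correct and follows essentially the same route as the paper, which also writes $\widetilde{M}=\varinjlim \widetilde{M}_i$, invokes Corollary~\ref{cor:M_i}, and cites a standard exercise (Massey, Exercise 2.4.11) for the fact that $\pi_1$ commutes with this direct limit. Your extra care in checking that the inclusion-induced maps are the identity on $N/N_X$ (via loops supported over a single polytope in the identity copy of $\widetilde{X}_0$) is a point the paper leaves implicit, but it is the same argument.
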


\begin{proof}
We may describe $\widetilde{M}$ as a direct limit $\widetilde{M} = \varinjlim\widetilde{M}_i$, and so we
apply  Corolloary~\ref{cor:M_i} and Exercise 2.4.11 from \cite[pp.\ 67--68]{massey} to deduce that 
$\pi_1(\widetilde{M}) \cong \varinjlim \pi_1(\widetilde{M}_i)\cong \varinjlim N/N_{X} = N/N_X$.
\end{proof}

\noindent
We next show that $N/N_X$ is a subgroup of $\pi_1(M)$.

\begin{cor}
Let $M$ be the toric origami manifold with orbit space $X$, let $\widetilde{X}$ be the universal cover of $X$, and
$\widetilde{M}$  the toric origami manifold with boundary with orbit space $\widetilde{X}$, as described
above. Then there is an injection $N/N_X\into \pi_1(M)$.
\end{cor}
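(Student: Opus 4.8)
The plan is to realize $N/N_X$ as the fundamental group of a covering space of $M$. Recall that $\widetilde X$ is the universal cover of the orbit space $X$, that $\widetilde M$ is the toric origami manifold with boundary whose template graph is that of $\widetilde X$, and that (as noted just before Corollary~\ref{cor:tilde M}) there is a covering map $V\colon\widetilde M\to M$. By Corollary~\ref{cor:tilde M} we already know $\pi_1(\widetilde M)\cong N/N_X$. Since $V$ is a covering map, the induced homomorphism $V_*\colon\pi_1(\widetilde M)\to\pi_1(M)$ is injective, and composing it with the isomorphism $N/N_X\cong\pi_1(\widetilde M)$ gives the desired injection $N/N_X\into\pi_1(M)$. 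So most of the work is already done, and the only thing left to spell out is why $V$ is genuinely a covering map.

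For that, I would argue as follows. The deck group of $\widetilde X\to X$ is $\pi_1(X)\cong F_k$, acting freely and properly discontinuously on $\widetilde X$ by permuting the copies of $\widetilde X_0$ indexed by the vertices of the Cayley graph of $F_k$. Because a toric origami manifold is determined functorially by its template data (Theorem~\ref{thm:origamiDelzant}), each deck transformation of $\widetilde X$ induces an equivariant origami-symplectomorphism of $\widetilde M$, so $F_k$ acts on $\widetilde M$ lifting its action on $\widetilde X$. This lifted action is free: a nontrivial $g\in F_k$ moves every point of $\widetilde X$, hence moves every $\T$-orbit of $\widetilde M$, hence moves every point of $\widetilde M$; and it is properly discontinuous for the same reason it is on $\widetilde X$. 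Therefore $\widetilde M\to\widetilde M/F_k$ is a covering map, and $\widetilde M/F_k$ is a toric origami manifold with orbit space $\widetilde X/F_k=X$ and template graph $G$, so by the uniqueness in Theorem~\ref{thm:origamiDelzant} it is equivariantly origami-symplectomorphic to $M$; this identification is the covering map $V$.

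I do not expect a real obstacle. The one point that deserves care is the passage from the deck action on the orbit space to a free action on $\widetilde M$ itself, i.e. that a combinatorial self-map of the template lifts functorially to the manifold and that freeness on $\widetilde X$ forces freeness on $\widetilde M$; this is precisely what makes $V$ a covering rather than merely a map lying over the cover $\widetilde X\to X$. If one prefers to avoid invoking the classification theorem for this, the covering $V$ can instead be produced as the pullback of $\widetilde X\to X$ along the quotient map $q\colon M\to X$: the pullback carries a free $\T$-action with orbit space $\widetilde X$, and then the uniqueness half of Theorem~\ref{thm:origamiDelzant} identifies it with $\widetilde M$. Together with Proposition~\ref{prop:masudapark}, this corollary is what will let us conclude, in the next step, that the Masuda--Park epimorphism $\rho$ is an isomorphism.
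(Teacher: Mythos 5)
Your proof is correct and follows exactly the route the paper takes: the covering map $V\colon\widetilde M\to M$ induces an injection $V_*$ on fundamental groups, and Corollary~\ref{cor:tilde M} identifies $\pi_1(\widetilde M)$ with $N/N_X$. The only difference is that you supply a justification for why $V$ is genuinely a covering map, a point the paper asserts without proof in the setup preceding the corollary; your extra care there is sound and does not change the argument.
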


\begin{proof}
As noted above, we have a covering map $V: \widetilde{M}\to M$ and therefore there is an injection 
$V_*: \pi_1( \widetilde{M})\into \pi_1(M)$.
The result now follows from Corollary~\ref{cor:tilde M}.
\end{proof}

The group $N/N_X$ must be trivial or cyclic.  When it is trivial, then $\rho$ provides an isomorphism 
$\pi_1(M)\cong \pi_1(X)$ in
Proposition~\ref{prop:masudapark}. We now tackle the two separate cases when $N/N_X$ is finite and when it is isomorphic to $\Z$.

\begin{proposition}\label{prop: pi1 for finite cyclic}
Let $M$ be the toric origami manifold with orbit space $X$.  If $N/N_X$ is a finite cyclic group, then the surjection
$\rho$ from Proposition~\ref{prop:masudapark} is an isomorphism.
\end{proposition}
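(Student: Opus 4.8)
The plan is to leverage the two facts already established: the epimorphism $\rho\colon N/N_X\times\pi_1(X)\to\pi_1(M)$ of Proposition~\ref{prop:masudapark}, whose kernel is contained in the $N/N_X$ factor, and the injection $V_*\colon\pi_1(\widetilde M)\into\pi_1(M)$ with $\pi_1(\widetilde M)\cong N/N_X$ from Corollary~\ref{cor:tilde M}. Since $\ker(\rho)\subseteq N/N_X\times\{1\}$, the map $\rho$ is an isomorphism precisely when its restriction to $N/N_X\times\{1\}$ is injective, i.e.\ when $\rho|_{N/N_X}$ has trivial kernel. So everything reduces to showing that the composite $N/N_X = N/N_X\times\{1\}\hookrightarrow N/N_X\times\pi_1(X)\xrightarrow{\rho}\pi_1(M)$ is injective.

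First I would identify this composite with $V_*$ (up to automorphism of $N/N_X$). The covering $V\colon\widetilde M\to M$ is exactly the cover induced by the universal cover $\widetilde X\to X$ of orbit spaces, so on $\pi_1$ it realizes the inclusion of the ``vertical'' directions coming from the torus fibers over a single sheet; this should match, under the identification $\pi_1(\widetilde M)\cong N/N_X$ of Corollary~\ref{cor:tilde M}, with the way Masuda--Park build $\rho$ on the $N/N_X$ factor (their $\rho$ sends $N/N_X$ to loops in the torus orbits). Concretely: $q_*\circ\rho$ is projection to $\pi_1(X)$, so $\rho(N/N_X\times\{1\})\subseteq\ker(q_*)$; and $V_*(\pi_1(\widetilde M))$ also lies in $\ker(q_*)$ because $q\circ V$ factors through the universal cover $\widetilde X$. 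Thus both $\rho|_{N/N_X}$ and $V_*$ land in $\ker(q_*)\subseteq\pi_1(M)$, and I would check they have the same image by tracking a generator of $N/N_X$ (an orbit circle) through both constructions. Once the images agree and $V_*$ is known to be injective with domain of size $|N/N_X|$, a counting/finiteness argument forces $\rho|_{N/N_X}$ to be injective: its image is a quotient of $N/N_X$ that contains (hence equals) the image of the injection $V_*$, so no collapsing occurred.

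The cleanest way to run the last step, given that $N/N_X$ is a \emph{finite} cyclic group, is order-counting. Write $C = N/N_X$, a finite cyclic group of order $m$. The image $\rho(C\times\{1\})$ is a subgroup of $\ker(q_*)$ which is a quotient of $C$, so it has order dividing $m$. On the other hand $V_*(\pi_1(\widetilde M))\cong C$ has order exactly $m$ and sits inside $\ker(q_*)$. If I can show $V_*(\pi_1(\widetilde M))\subseteq\rho(C\times\{1\})$ — equivalently that the orbit-circle generating $\pi_1(\widetilde M)$ maps into the image of $\rho$'s first factor, which is immediate since that orbit circle is a torus orbit and Masuda--Park's $\rho$ is defined on exactly such orbit loops — then $m = |V_*(\pi_1(\widetilde M))| \le |\rho(C\times\{1\})| \le m$, forcing equality and hence injectivity of $\rho|_C$. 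Combined with $\ker(\rho)\subseteq C\times\{1\}$, this gives $\ker(\rho)=1$, so $\rho$ is an isomorphism.

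The main obstacle I anticipate is the bookkeeping in the second paragraph: making precise that the covering $V\colon\widetilde M\to M$ induces, on fundamental groups, the \emph{same} subgroup of $\ker(q_*)$ as the $N/N_X$-factor of $\rho$. This requires matching two a priori different descriptions of $N/N_X$ inside $\pi_1(M)$ — one via Masuda--Park's explicit loops in principal orbits, one via the deck-transformation/limit construction of $\widetilde M$ — and verifying the identification is compatible with the natural map $N/N_X \to \pi_1(M)$ (not, say, off by a nontrivial automorphism that could a priori change nothing, but must still be checked to land in the right place). Everything else is formal: finiteness of $C$ does all the work once the images are known to coincide.
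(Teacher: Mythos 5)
Your proposal is correct and follows essentially the same route as the paper: both arguments rest on the injection $V_*\colon \pi_1(\widetilde{M})\cong N/N_X\into\pi_1(M)$ together with an order count against $\rho(N/N_X\times\{\id\})\cong (N/N_X)/\ker(\rho)$. The only real difference is where one locates the image of $V_*$ --- the paper puts it inside the torsion factor of $\pi_1(M)\cong\Z/k\Z\times F_\ell$ because $F_\ell$ is torsion-free, while you put it inside $\ker(q_*)=\rho(N/N_X\times\{\id\})$ because $q\circ V$ factors through the simply connected $\widetilde{X}$ --- and in particular the generator-tracking identification of $V_*$ with $\rho|_{N/N_X}$ that you flag as the main obstacle is unnecessary: the formal containment you already derived, combined with surjectivity of $\rho$ and $q_*\circ\rho=\mathrm{pr}_2$, is all the counting argument needs.
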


\begin{proof}
We know that $\pi_1(M)\cong (N/N_X \times \pi_1(X))/\ker(\rho)$, and
that the kernel $\ker(\rho)\subset N/N_X$. Because $N/N_X$ is finite, we have an isomorphism $\pi_1(M) \cong \Z/k \Z \times F_\ell$, where 
$F_\ell\cong \pi_1(X)$ is a free group on $\ell$ generators. 
The image of $N/N_X$ under the injection $V_*$ must be in the $\Z/k\Z$ factor, since $F_\ell$ is free.  
The only way for the finite group $N/N_X$ to be a subgroup of $\Z/k \Z\cong (N/N_X)/\ker(\rho)$ is for $\ker(\rho)=\{\id\}$.
This completes the proof.
\end{proof}

\noindent Finally, we turn to the case when $N/N_X\cong \Z$.  
This situation turns out to be quite rigid.

\begin{proposition}\label{prop:YtimesT2}
The quotient $N/N_X\cong \Z$ if and only if the toric origami manifold $M^{2n}$ is 
equivariantly homeomorphic to $\T^2\times Y$, where $Y=Y_F$ is a toric symplectic manifold of dimension 
$2n-2$, the torus $\T^2$ is a toric origami manifold.
\end{proposition}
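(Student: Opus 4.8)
The plan is to analyze the structure forced by $N/N_X \cong \Z$ and show it pins down the template almost completely. First I would observe the easy direction: if $M = \T^2 \times Y_F$ with $\T^2$ the toric origami manifold whose template is a segment with two polytopes glued along a pair of endpoints (a ``cyclic'' segment, i.e. the graph is a single loop or a bigon), then the orbit space $X$ of $M$ is $S^1 \times \Delta_F$, whose facet normals are exactly the facet normals of $\Delta_F$ sitting in the sublattice $\Z^{n-1} \subset \Z^n$; the remaining $\Z$-direction is not spanned, so $N/N_X \cong \Z$. This direction is a direct computation with Definition~\ref{def:NandNX}.

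For the forward direction I would argue as follows. Since $N/N_X \cong \Z$, the sublattice $N_X$ has rank $n-1$; choose coordinates so that $N_X \subseteq \Z^{n-1} \times \{0\}$ and in fact (after saturating) $N_X = \Z^{n-1}\times\{0\}$, since $N/N_X$ torsion-free forces $N_X$ to be a primitive sublattice. Thus \emph{every} facet normal of \emph{every} polytope $\Psi_V(v)$ in the template lies in the hyperplane $\R^{n-1}\times\{0\}$. Now I would use the Delzant (smoothness) condition at a vertex of any $\Psi_V(v)$: the $n$ primitive edge vectors at a vertex span $\Z^n$, and they are dual to the $n$ facet normals at that vertex. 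If all facet normals lie in a common rational hyperplane, then each $\Psi_V(v)$ splits as a product $\Delta_v' \times [a_v, b_v]$ (or $\Delta_v' \times \R$ if unbounded, but the pieces are compact Delzant polytopes) with the last factor an interval in the $\R \cdot e_n$ direction, and $\Delta_v'$ a Delzant polytope in $\R^{n-1}$. This is the key rigidity step and I expect it to be the main obstacle: one must check that the splitting is \emph{compatible across the gluings} --- i.e. that the fold facets $\Psi_E(e)$, being facets of these product polytopes whose own normals also lie in $\R^{n-1}\times\{0\}$, must be ``vertical'' facets of the form $F' \times [a_v,b_v]$ (they cannot be the top/bottom faces $\Delta_v' \times \{a_v\}$, since those have normal $\pm e_n \notin N_X$). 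Hence gluing along fold facets only ever identifies the $\Delta_v'$ factors while leaving the interval factors untouched, so the whole template is a product: there is a one-dimensional origami template $T_{\mathrm{base}}$ (a graph of intervals glued at endpoints) and a fixed $(n-1)$-dimensional Delzant polytope $\Delta_F$ with $X \cong |T_{\mathrm{base}}| \times \Delta_F$.

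Next I would identify $|T_{\mathrm{base}}|$: its orbit space deformation retracts to the template graph $G$, and $\pi_1(X) = \pi_1(G)$ is free of rank equal to $b_1(G)$. But by Proposition~\ref{prop:masudapark} combined with the already-established injection $N/N_X \into \pi_1(M)$ and $N/N_X \cong \Z$, together with the fact that $\pi_1(M)$ is an extension of $\pi_1(X)=F_\ell$ by a quotient of $\Z$, one deduces (this is where the $\Z$-case differs from the finite case of Proposition~\ref{prop: pi1 for finite cyclic}) that the product structure forces $\pi_1(M) \cong \Z \times \pi_1(X)$ and that for the one-dimensional base the only way to get an \emph{orientable, coorientable} origami manifold with $N/N_X\cong\Z$ rather than trivial is that $G$ is a single cycle: a bigon (two vertices, two edges) or --- if one allows it --- the base must be exactly the $\T^2$ template. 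A one-dimensional Delzant ``polytope'' is just a segment, and a one-dimensional toric origami manifold built from a cycle of segments glued at endpoints, with the orientability/coorientability hypotheses of Remark~\ref{rmk:orientable}, is precisely $\T^2$ (it is $S^1$-worth of folds but globally the torus). Therefore $X \cong S^1 \times \Delta_F$ and, running the radial blow-up / cutting correspondence (Proposition~\ref{prop:model} and Theorem~\ref{thm:origamiDelzant}) factor-wise, $M \cong \T^2 \times Y_F$ equivariantly, where $Y_F$ is the toric symplectic manifold of the Delzant polytope $\Delta_F$.

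I would then remark that the main technical content is the two compatibility claims: (i) a compact Delzant polytope all of whose facet normals lie in a fixed coordinate hyperplane is a product of an interval with a lower-dimensional Delzant polytope (a short argument via the smoothness condition at any vertex, using that the dual basis to the facet normals must include $\pm e_n$), and (ii) the gluing maps of the template respect this product decomposition because fold facets cannot be the ``horizontal'' top/bottom faces. Granting these, the rest is assembling the covering-space and van Kampen bookkeeping already set up in Corollaries~\ref{cor:M_i} and~\ref{cor:tilde M} to conclude the $\pi_1$ statement, and invoking the classification Theorem~\ref{thm:origamiDelzant} to upgrade the template isomorphism to an equivariant homeomorphism (indeed origami-symplectomorphism) $M \cong \T^2 \times Y_F$.
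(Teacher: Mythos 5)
Your reverse direction is fine and matches the paper's. The forward direction, however, contains a genuine error at its central step. You assert that \emph{every} facet normal of \emph{every} polytope $\Psi_V(v)$ lies in $(N_X)_\R \cong \R^{n-1}\times\{0\}$, and consequently that the fold facets, ``whose own normals also lie in $\R^{n-1}\times\{0\}$,'' must be the vertical facets $F'\times[a_v,b_v]$ of the product decomposition. This is false: by the paper's description of the orbit space, the facets of $X$ are exactly the images of the \emph{non-fold} facets (fold facets are glued in pairs and become interior to $X$), so $N_X$ is spanned only by the non-fold facet normals. The fold facet normals are under no obligation to lie in $(N_X)_\R$ --- and indeed they cannot all do so, since a compact Delzant polytope's facet normals must positively span $\R^n$ (your parenthetical ``or $\Delta_v'\times\R$ if unbounded'' is exactly the contradiction you would reach if your claim were true). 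The correct picture is the opposite of yours: the paper shows that the non-fold facets are the vertical sides of an infinite prism $F+\R\cdot u$ over a fold facet $F$, and the fold facets are the two caps (their normals are the ones transverse to $(N_X)_\R$; there are exactly two because fold facets of a single polytope are pairwise disjoint by condition (2) of Definition~\ref{def:template}, and a non-fold cap is excluded because its normal would enlarge $N_X$). With your orientation of the product, gluing along fold facets would identify the $(n-1)$-dimensional factors and leave the interval factor untouched, yielding $X\cong X'\times[a,b]$ rather than $X\cong \SS^1\times F$, so your argument cannot reach the stated conclusion.

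Two further consequences of this inversion: (i) in your setup nothing forces each polytope to have exactly two fold facets, so your identification of the template graph as a single cycle does not follow; in the paper it is immediate because every vertex of $G$ has degree two. (ii) Your appeal to $\pi_1(M)\cong\Z\times\pi_1(X)$ to pin down the base is both unnecessary and dangerously close to circular, since the paper's computation of $\pi_1(M)$ in the $N/N_X\cong\Z$ case (Corollary~\ref{cor:pi1 for Z} and Theorem~\ref{thm:fund group}) is itself deduced from this Proposition. The final assembly step (a face-preserving linear homeomorphism $h:F\times[a_j,b_j]\to P_j$ inducing an equivariant homeomorphism $M_j\cong Y_F\times\SS^1\times[a_j,b_j]$, then gluing boundary components around the cycle) is the part of your outline that does survive once the decomposition is corrected.
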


\begin{proof}
\noindent ($\Longrightarrow$)
We begin by assuming that $N/N_X\cong \Z$.  This means that $N_{\R}/(N_X)_{\R}\cong \R$, and so $U=(N_X)_{\R}$
is a hyperplane in $N_{\R}=\R^n$.  Let $u$ be a non-zero vector orthogonal to $U$.

We fix a choice of a single polytope $P=\Psi_V(v)$ in the moment image, and fix
$F$ a fold facet of $P$, that is, $F=\Psi_E(e)$ for some edge $e$ in the template.  Let $\eta$ denote the normal vector to $F$.  
Let $F_1,\dots,F_s$ be the facets of $P$ adjacent to $F$, and let $\eta_1,\dots,\eta_s$ denote the normal vectors to the facets.
Because $P$ is a Delzant polytope, $\eta,\eta_1,\dots,\eta_s$ span $N$.  
The vectors $\eta_1,\dots,\eta_s$ must span a subspace of $U$.  Combined with the fact that $P$ is Delzant, we may conclude that
$\eta_1,\dots,\eta_s$ span an $(n-1)$-dimensional subspace, so they must span all of $U$.
Each hyperplane $\eta_i^\perp$ contains $\R\cdot u$.  Thus, the affine hyperplanes $\mathcal{H}_i$ that define the facets $F_i$
all contain an affine translation of $\R\cdot u$. This means that the intersection of affine half-spaces
$$
\bigcap_{i=1}^s \mathcal{H}_i^+
$$
used to define part of $P$ can be described as an infinite prism
$$
\bigcap_{i=1}^s \mathcal{H}_i^+ = F+  \R\cdot u,
$$
where $+$ denote the Minkowski sum. 
There cannot be another non-fold facet of $P$ because the Delzant condition would force that facet to have a normal
vector pointing out of $U$, contradicting the hypothesis that $N/N_X\cong \Z$.  Therefore, there are only fold facets remaining,
and because fold facets must be isolated, there can be only one additional fold facet $\widetilde{F}$ capping off $P$.  
Note that the infinite prism $\widetilde{F}+\R\cdot u$ is identical to $F+  \R\cdot u$, and indeed $F$ and $\widetilde{F}$
have the same combinatorial type.
This description as a subset of an infinite prism is valid for each polytope in the image of $\Psi_V$.  Moreover, because
adjacent polytopes must agree near their shared fold facet, the infinite prism 
is identical in each case.
This implies that the moment image of $M$ is contained in the infinite prism  $F+\R\cdot u$. 
Moreover the template graph must be a cycle.

For our fixed choice of $P$ and $F$, we have a hyperplane $U_F = \eta^\perp\cong \R^{n-1}$, with lattice
$U_F\cap N$. Let $Y=Y_F$ denote the toric symplectic manifold of dimension $2n-2$ corresponding to the 
Delzant polytope $F\subset U_F$.  We now consider the closure $M_j$ in $M$ of a connected component 
$W_j$ of $M\setminus Z$.  This corresponds to a vertex $v_j$ in the template graph, and hence a polytope
$P_j=\Psi_V(v_j)$. We want to show that $M_j$ is equivariantly homeomorphic to 
$Y_F \times \SS^1 \times [ a_j,b_j]$.  To do so, we will think of constructing a toric symplectic manifold in the
topological manner, by taking a quotient of $P\times \T$ by an equivalence relation to get $Y_P = P\times \T/\sim$.  
In this way, if $P_j= F\times [ a_j,b_j]$, then we get a splitting  of the symplectic cut piece $C_j = Y_F \times Y_{[a_j,b_j]} 
= Y_F \times \SS^2_{[a_j,b_j]}$, and hence $M_j = Y_F \times \SS^1 \times [ a_j,b_j]$.

For the general case, we proceed as follows: let $h:F\times {[a_j,b_j]}\to P_j$ be a linear homeomorphism, preserving faces, and consider the homeomorphism $h\times\id_{\T^n}:F\times {[a_j,b_j]}\times \T^n\to P_j\times \T^n$. The closure $M_j$ of $W_j$ is obtained from $P_j\times \T^n$ by collapsing the appropriate $\SS^1\subset \T^n$ fibers over those facets of $P_j$ which are in the image $h(\partial F\times {[a_j,b_j]})$. Note that the symplectic cut piece $C_j$ corresponding to $W_j$ would be obtained by further collapsing the appropriate $\SS^1\subset \T^n$ fibers over the remaining facets of $P_j$, namely $h(F\times\{a_j\})$ and $h(F\times\{b_j\})$. In the topological construction, the circle subgroup of $\T^n$ that we collapse over a particular facet is indicated by the normal vector to that facet. Because the homeomorphism $h$ does not change the normal vectors to the facets in the image $h(\partial F\times {[a_j,b_j]})$, the map $h\times\id_{\T^n}$ induces an equivariant homeomorphism between $Y_F\times \SS^1 \times {[a_j,b_j]}$ and  $M_j$, as desired.

Thus, we have seen that the closure $M_j$ of each connected component of $M\setminus Z$ is a manifold with boundary homeomorphic to $Y_F\times \SS^1\times [a_j,b_j]$, with two boundary components that correspond to the two fold facets of the polytope $P_j$. The whole manifold $M$ is obtained from the collection of $M_j$s  by identifying them along their boundaries as prescribed by $\Psi_E$.  Thus, we may deduce that $M$ is equivariantly homeomorphic to the union of the $Y_F\times \SS^1\times [a_j,b_j]$'s  along their boundaries, and therefore is equivariantly homeomorphic to $Y_F\times \T^2$.

\medskip

\noindent
($\Longleftarrow$)
If $M$ is equivariantly homeomorphic to $\T^2\times Y$ with $Y$ toric symplectic and $\T^2$ toric origami, then its moment image is as described in the paragraphs above: see \cite[Figure~14]{CGP:origami} for the moment image of a toric origami $\T^2$. Thus we must have that $N/N_X\cong \Z$.
\end{proof}

\begin{definition}\label{def:prismatic origami}
In the case when the quotient $N/N_X\cong \Z$ and $M\cong \T^2\times Y$,
we call the toric origami manifold {\bf prismatic}.
\end{definition}

\begin{cor}\label{cor:pi1 for Z}
If a toric origami manifold $M$ is prismatic, then its fundamental group is 
$\pi_1(M)=\mathbb{Z}^2$.
\end{cor}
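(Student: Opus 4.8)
The plan is to derive this directly from Proposition~\ref{prop:YtimesT2}, which is the substantial structural input. Since $M$ is prismatic, by Definition~\ref{def:prismatic origami} we have an equivariant homeomorphism $M \cong \T^2 \times Y$, where $Y = Y_F$ is a toric symplectic manifold of dimension $2n-2$. An equivariant homeomorphism is in particular a homeomorphism, so $\pi_1(M) \cong \pi_1(\T^2 \times Y) \cong \pi_1(\T^2) \times \pi_1(Y)$ by the standard fact that the fundamental group of a product is the product of the fundamental groups.

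The remaining point is that $\pi_1(Y)$ is trivial. This follows because $Y$ is a (compact) toric symplectic manifold: by Delzant's theorem it corresponds to a Delzant polytope $F$, and a smooth compact toric variety arising from a Delzant polytope is simply connected. One clean way to see this within the framework already set up in the paper: $Y_F$ is homeomorphic to the normal toric variety $X_\Sigma$ whose fan $\Sigma$ is the normal fan of the Delzant polytope $F$; since $F$ is a (smooth, full-dimensional) polytope, the support $|\Sigma|$ is all of $(U_F)_\R \cong \R^{n-1}$, so the sublattice $N_\Sigma$ generated by $|\Sigma| \cap (U_F\cap N)$ is the full lattice $U_F \cap N$. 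Theorem~\ref{thm:coxlittleschenck} then gives $\pi_1(Y_F) \cong (U_F\cap N)/N_\Sigma = 0$. Alternatively one may simply cite that smooth projective toric varieties are simply connected.

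Combining, $\pi_1(M) \cong \pi_1(\T^2) \times \pi_1(Y_F) \cong \Z^2 \times 0 = \Z^2$, which is the claimed statement.

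I do not anticipate any real obstacle here: the corollary is a short consequence of the preceding proposition together with a well-known fact about toric symplectic manifolds. The only thing to be slightly careful about is to record why $\pi_1(Y_F)$ vanishes rather than leaving it implicit, and to note that the equivariant homeomorphism of Proposition~\ref{prop:YtimesT2} is being used only as an ordinary homeomorphism for the purpose of computing $\pi_1$.
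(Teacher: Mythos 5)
Your proposal is correct and follows essentially the same route as the paper: invoke Proposition~\ref{prop:YtimesT2} to identify $M$ with $\T^2\times Y_F$, use that $\pi_1$ of a product is the product of the $\pi_1$'s, and note that a compact toric symplectic manifold is simply connected. The paper simply cites that last fact, whereas you justify it via Theorem~\ref{thm:coxlittleschenck}; this is a harmless elaboration, not a different argument.
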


\begin{proof}
The fundamental group is a homeomorphism invariant, and $\pi_1(\T^2\times Y)=\pi_1(\T^2)\times \pi_1(Y)$, where $\pi_1(\T^2)=\mathbb{Z}^2$ and $Y$ is simply connected because it is a toric symplectic manifold.
\end{proof}

\noindent 
We now have all the necessary ingredients to compute $\pi_1({M})$.

\begin{theorem}\label{thm:fund group}
Let $M$ be an orientable toric origami manifold with orbit space $X$, and let $N$ and $N_X$ be as in Definition~\ref{def:NandNX}. Then the fundamental group of $M$ is 
$$\pi_1(M)\cong N/N_X\times\pi_1(X).$$
\end{theorem}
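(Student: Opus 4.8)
The plan is to combine Proposition~\ref{prop:masudapark} with the results just established about the covering space $\widetilde{M}$. Proposition~\ref{prop:masudapark} already gives a surjection $\rho\colon N/N_X\times\pi_1(X)\to\pi_1(M)$ whose kernel is contained in the $N/N_X$ factor, and Remark~\ref{rmk:pi1 for trivial} tells us that $N/N_X$ is trivial, finite cyclic, or infinite cyclic. So the theorem reduces to showing that $\ker(\rho)$ is trivial in each of these three cases, and the heavy lifting has been done: the trivial case is immediate, the finite cyclic case is Proposition~\ref{prop: pi1 for finite cyclic}, and the infinite cyclic case is handled by Proposition~\ref{prop:YtimesT2} together with Corollary~\ref{cor:pi1 for Z}.

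Concretely, I would argue as follows. If $N/N_X$ is trivial, then $\rho$ is already an isomorphism by Remark~\ref{rmk:pi1 for trivial}, and there is nothing to prove. If $N/N_X$ is a nontrivial finite cyclic group, then Proposition~\ref{prop: pi1 for finite cyclic} says $\rho$ is an isomorphism, so again $\pi_1(M)\cong N/N_X\times\pi_1(X)$. Finally, suppose $N/N_X\cong\Z$. By Proposition~\ref{prop:YtimesT2}, $M$ is prismatic: it is equivariantly homeomorphic to $\T^2\times Y$ with $Y=Y_F$ a (simply connected) toric symplectic manifold. Since $X$ deformation retracts to the template graph, which in this case is a cycle, $\pi_1(X)\cong\Z$, so $N/N_X\times\pi_1(X)\cong\Z^2$. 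On the other hand, Corollary~\ref{cor:pi1 for Z} gives $\pi_1(M)\cong\Z^2$ directly. It remains only to check that the surjection $\rho\colon\Z^2\to\Z^2$ is an isomorphism; since $\Z^2$ is Hopfian (a finitely generated abelian group, or more generally any finitely generated group, has no proper surjection onto itself), any epimorphism $\Z^2\to\Z^2$ is an isomorphism, so $\ker(\rho)$ is trivial and $\pi_1(M)\cong N/N_X\times\pi_1(X)$.

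I expect the only genuinely nontrivial input to be Proposition~\ref{prop:YtimesT2}, whose proof is the structural heart of the $\Z$ case; in the statement of the theorem itself the main obstacle is really just bookkeeping, namely making sure the three cases of Remark~\ref{rmk:pi1 for trivial} are exhaustive and that in the infinite cyclic case the abstract isomorphism type of $\pi_1(M)$ coming from Corollary~\ref{cor:pi1 for Z} is correctly matched with the source $N/N_X\times\pi_1(X)$ of $\rho$ via the Hopfian property. Assembling these pieces, the proof is short: it is a case analysis on the isomorphism type of $N/N_X$, invoking Proposition~\ref{prop:masudapark}, Proposition~\ref{prop: pi1 for finite cyclic}, and Proposition~\ref{prop:YtimesT2}/Corollary~\ref{cor:pi1 for Z} in turn.
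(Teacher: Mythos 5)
Your proof is correct and follows essentially the same route as the paper: a case analysis on the isomorphism type of $N/N_X$ using Proposition~\ref{prop:masudapark}, Remark~\ref{rmk:pi1 for trivial}, Proposition~\ref{prop: pi1 for finite cyclic}, and Proposition~\ref{prop:YtimesT2} together with Corollary~\ref{cor:pi1 for Z}; the paper's own proof of the theorem only records the abstract isomorphism $\pi_1(M)\cong\Z^2$ in the prismatic case (noting $\pi_1(X)\cong\Z$ because the template graph is a cycle) and defers the verification that $\rho$ itself is an isomorphism to the corollary immediately following, which argues exactly as you do. One small correction: your parenthetical claim that every finitely generated group is Hopfian is false (the Baumslag--Solitar group $BS(2,3)$ is a standard counterexample), but the fact you actually need --- that the finitely generated abelian group $\Z^2$ is Hopfian --- is true, so your argument stands.
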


\begin{proof}
By Remark~\ref{rmk:pi1 for trivial}, Proposition~\ref{prop: pi1 for finite cyclic} and Corollary~\ref{cor:pi1 for Z}, the only thing missing is to check that for $M$ prismatic, the fundamental group of the orbit space is $\pi_1(X)=\Z$. This is true because $X$ deformation retracts onto the template graph, which as remarked in the proof of Proposition~\ref{prop:YtimesT2} is a cycle.
\end{proof}

\noindent 
In particular, this allows us to deduce that Masuda and Park's map $\rho$ \cite{masuda-park} is an isomorphism.

\begin{cor}
The epimorphism $\rho\colon N/N_X \times \pi_1(X) \to \pi_1(M)$ from Proposition~\ref{prop:masudapark} is an isomorphism.
\end{cor}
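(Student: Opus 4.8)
The plan is to assemble the pieces already established in this section, which together pin down $\pi_1(M)$ in every case, and then observe that Theorem~\ref{thm:fund group} says exactly that $\rho$ is an isomorphism. Concretely, Proposition~\ref{prop:masudapark} gives us the epimorphism $\rho\colon N/N_X\times\pi_1(X)\to\pi_1(M)$ with $\ker(\rho)\subseteq N/N_X$, and Remark~\ref{rmk:pi1 for trivial} tells us $N/N_X$ is trivial, finite cyclic, or infinite cyclic. I would simply go through these three cases: when $N/N_X$ is trivial, $\rho$ is an isomorphism by Remark~\ref{rmk:pi1 for trivial}; when $N/N_X$ is finite cyclic, it is an isomorphism by Proposition~\ref{prop: pi1 for finite cyclic}; and when $N/N_X\cong\Z$, the manifold is prismatic (Proposition~\ref{prop:YtimesT2}), so $\pi_1(M)=\Z^2$ by Corollary~\ref{cor:pi1 for Z} and $\pi_1(X)=\Z$ (since $X$ retracts onto the cyclic template graph), whence the source $N/N_X\times\pi_1(X)\cong\Z\times\Z$ surjects onto $\Z^2$ and a surjection $\Z^2\to\Z^2$ is an isomorphism.

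Since Theorem~\ref{thm:fund group} has already been proved by precisely this case analysis, the corollary is essentially immediate: the statement $\pi_1(M)\cong N/N_X\times\pi_1(X)$ together with the fact that the isomorphism is realized by (the map induced by) $\rho$ is what the theorem establishes. So the proof I would write just invokes Theorem~\ref{thm:fund group}: the theorem's proof shows that in each of the three cases the surjection $\rho$ has trivial kernel, hence is an isomorphism. One small point worth making explicit is that one should check the isomorphism in Theorem~\ref{thm:fund group} is genuinely $\rho$ and not merely some abstract isomorphism between the two groups; but this is built into the three cited ingredients, each of which concludes that $\ker(\rho)$ is trivial rather than that the two groups happen to be abstractly isomorphic.

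There is no real obstacle here — the corollary is a bookkeeping consequence of the theorem. The only point requiring a moment's thought is the prismatic case, where one must know that a surjective endomorphism of the finitely generated group $\Z^2$ is automatically injective (Hopficity of $\Z^2$, or simply that a surjective $\Z$-linear map $\Z^2\to\Z^2$ has determinant $\pm1$); this is already implicitly used in the proof of Theorem~\ref{thm:fund group} via Corollary~\ref{cor:pi1 for Z}. Accordingly, I would keep the proof to one or two sentences.

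\begin{proof}
By Theorem~\ref{thm:fund group}, $\pi_1(M)\cong N/N_X\times\pi_1(X)$, and its proof proceeds by showing, in each of the three cases for $N/N_X$ (trivial, finite cyclic, or infinite cyclic), that the kernel of the epimorphism $\rho$ of Proposition~\ref{prop:masudapark} is trivial: this is Remark~\ref{rmk:pi1 for trivial}, Proposition~\ref{prop: pi1 for finite cyclic}, and (using Corollary~\ref{cor:pi1 for Z} together with $\pi_1(X)\cong\Z$ for $M$ prismatic) the Hopficity of $\Z^2$, respectively. Hence $\rho$ is an isomorphism.
\end{proof}
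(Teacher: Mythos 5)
Your proposal is correct and follows essentially the same route as the paper: the trivial and finite cyclic cases are dispatched by Remark~\ref{rmk:pi1 for trivial} and Proposition~\ref{prop: pi1 for finite cyclic}, and in the prismatic case both arguments reduce to observing that $\pi_1(M)\cong(\Z\times\Z)/\ker(\rho)$ with $\ker(\rho)$ contained in the $N/N_X\cong\Z$ factor, which together with $\pi_1(M)\cong\Z^2$ forces $\ker(\rho)=\{\id\}$ (your appeal to Hopficity of $\Z^2$ is just a repackaging of that step).
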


\begin{proof}
By Remark~\ref{rmk:pi1 for trivial}, Proposition~\ref{prop: pi1 for finite cyclic}, we are only left with checking that $\rho$ is an isormorphism when $M$ is prismatic. Recall that in that case $N/N_X=\Z$ and $\pi_1(X)=\Z$. 

We know that $\pi_1(M)\cong (N/N_X \times \pi_1(X))/\ker(\rho)\cong \Z^2/\ker(\rho)$, and that $\ker(\rho)\subset N/N_X\cong\Z$. Since $\pi_1(M)=\Z^2$, the only possibility is that the kernel is the trivial subgroup $\ker(\rho)=\{\id\}$.
\end{proof}

Another consequence of Theorem~\ref{thm:fund group} is a characterization of simply connected toric origami manifolds.  
The following result indicates that the key assumption in our previous work \cite{holm-pires}, namely that the origami template
be acyclic, is a natural topological hypothesis.

\begin{cor}\label{cor:simply connected}
A toric origami manifold is simply connected if and only if its origami template is acyclic.
\end{cor}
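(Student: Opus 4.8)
The plan is to deduce the statement almost directly from Theorem~\ref{thm:fund group}, treating the two factors of $\pi_1(M)\cong N/N_X\times\pi_1(X)$ separately. First I would record the topological meaning of the second factor: since $X$ deformation retracts onto the template graph $G$, which is connected because $M$ is, we have $\pi_1(X)\cong\pi_1(G)$, a free group of rank equal to the first Betti number of $G$; hence $\pi_1(X)$ is trivial exactly when $G$ is a tree, i.e.\ exactly when the origami template is acyclic. The ``only if'' direction is then immediate: if $\pi_1(M)$ is trivial, so is the direct factor $\pi_1(X)$, and therefore $G$ is a tree, so the template is acyclic.

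For the ``if'' direction, assuming the template acyclic gives $\pi_1(X)=1$ for free, and it remains to check that $N/N_X=1$, thereby ruling out the finite cyclic possibility left open by Remark~\ref{rmk:pi1 for trivial}. The plan here is to exploit a leaf of the tree $G$. If $G$ has a single vertex $v$, then $\Psi_V(v)$ has no fold facets, so $N_X$ is spanned by the primitive normals at any vertex of this Delzant polytope and equals $N$. Otherwise $G$ has a leaf $v$, whose polytope $P=\Psi_V(v)$ carries exactly one fold facet $F$; since $P$ is a compact $n$-dimensional polytope it is not contained in the supporting hyperplane of $F$, so some vertex $w$ of $P$ does not lie on $F$, and then all $n$ facets of $P$ through $w$ are non-fold facets of $X$. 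Their primitive normals span $N$ over $\Z$ by the Delzant condition, so once more $N_X=N$, and $\pi_1(M)\cong 1\times 1$ is trivial.

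I expect the only genuinely non-routine point to be this last step: upgrading ``$N/N_X$ is trivial or finite cyclic'' to ``$N/N_X$ is trivial'' in the acyclic case. The geometric fact that makes it go through is simply that a compact polytope is never contained in one of its own supporting hyperplanes, so the unique fold facet at a leaf vertex cannot absorb all the vertices that carry a spanning set of facet normals.
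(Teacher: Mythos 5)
Your proposal is correct and follows essentially the same route as the paper: both directions rest on Theorem~\ref{thm:fund group}, with the cyclic case ruled out via the free factor $\pi_1(X)$ and the acyclic case handled by finding a vertex of a leaf polytope lying on no fold facet, where the Delzant condition forces $N_X=N$. Your write-up merely makes explicit two small points the paper leaves implicit (the single-vertex case and the reason a leaf polytope has a vertex off its unique fold facet).
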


\begin{proof}
If $M$ is a toric origami manifold and has at least one cycle in its template graph, then
there must be at least one infinite cyclic factor in $\pi_1(M)$ and $M$ not simply connected.

If the template graph is acyclic, then the $\pi_1(X)$ factor of $\pi_1(M)$ is trivial. In addition, any polytope corresponding to a leaf of the template graph has at least one vertex not contained in a fold facet. By the Delzant condition at that vertex, the lattice quotient $N/N_X$ is trivial. Thus $\pi_1(M)=\{\id\}$ and $M$ is simply connected.
\end{proof}

\noindent 
In Table~\ref{table:hirz} below, we show examples where $N/N_X$ takes on all possible types of group.

\renewcommand{\arraystretch}{1.3}
\begin{table}[h]
\begin{tabular}{ c | m{3.75cm} | m{5cm} | m{5.25cm}}\label{ref:hirz}
$M$  &  \centering $\SS^2\times\SS^1\times\SS^1 \cong \SS^2\times\T^2$ & \centering $\SS^3\times \SS^1$ & \centering $L(k ;1)\times\SS^1$ \tabularnewline \hline
\centering {$\Phi(M)$} & 
\centering \includegraphics[height=0.6in]{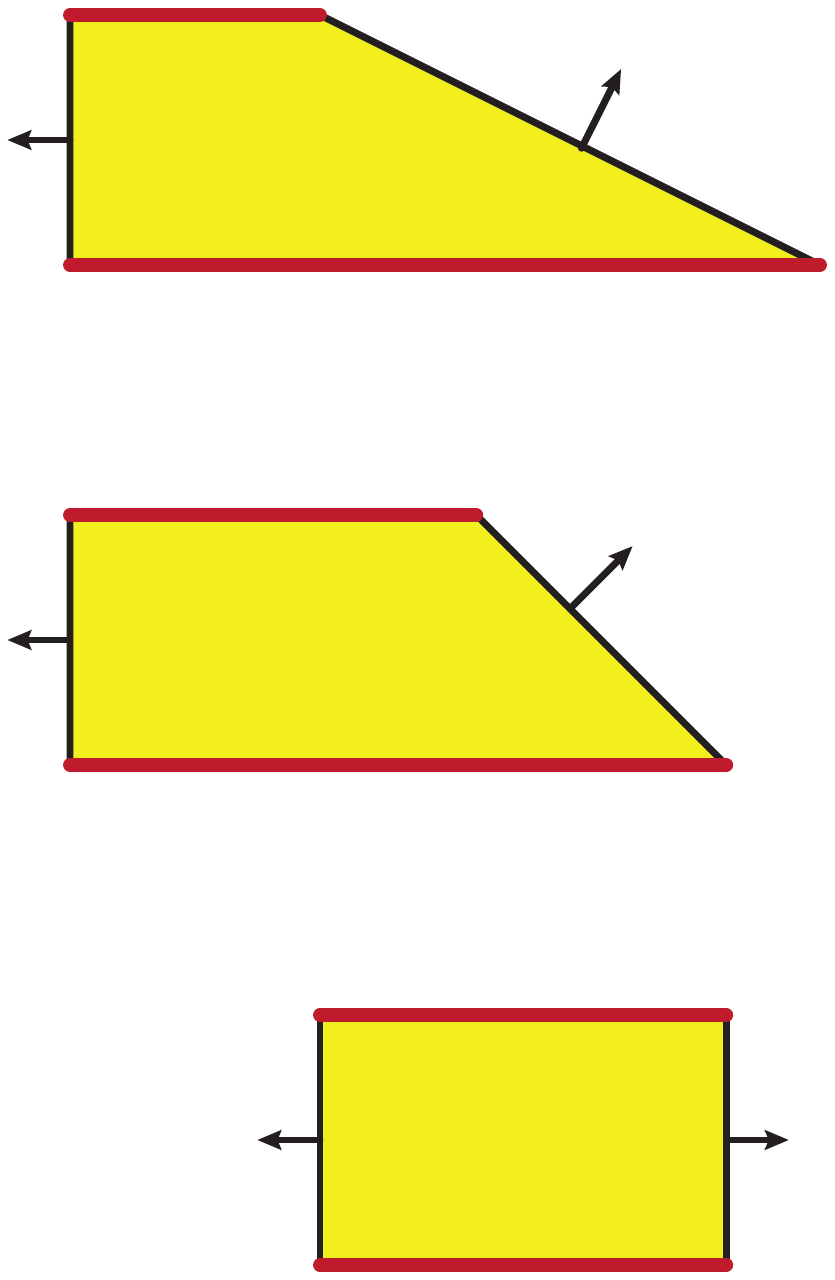} &
\centering \includegraphics[height=0.6in]{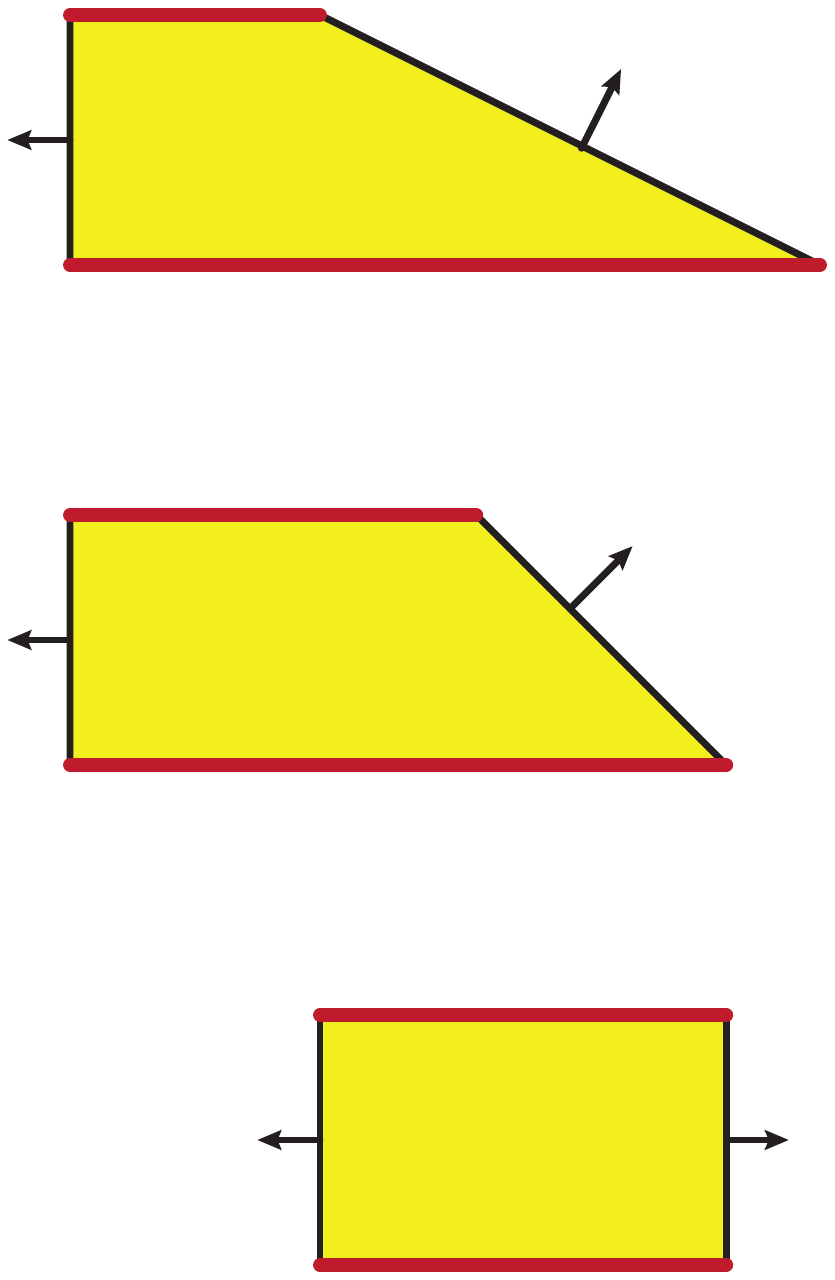} &
\centering \includegraphics[height=0.6in]{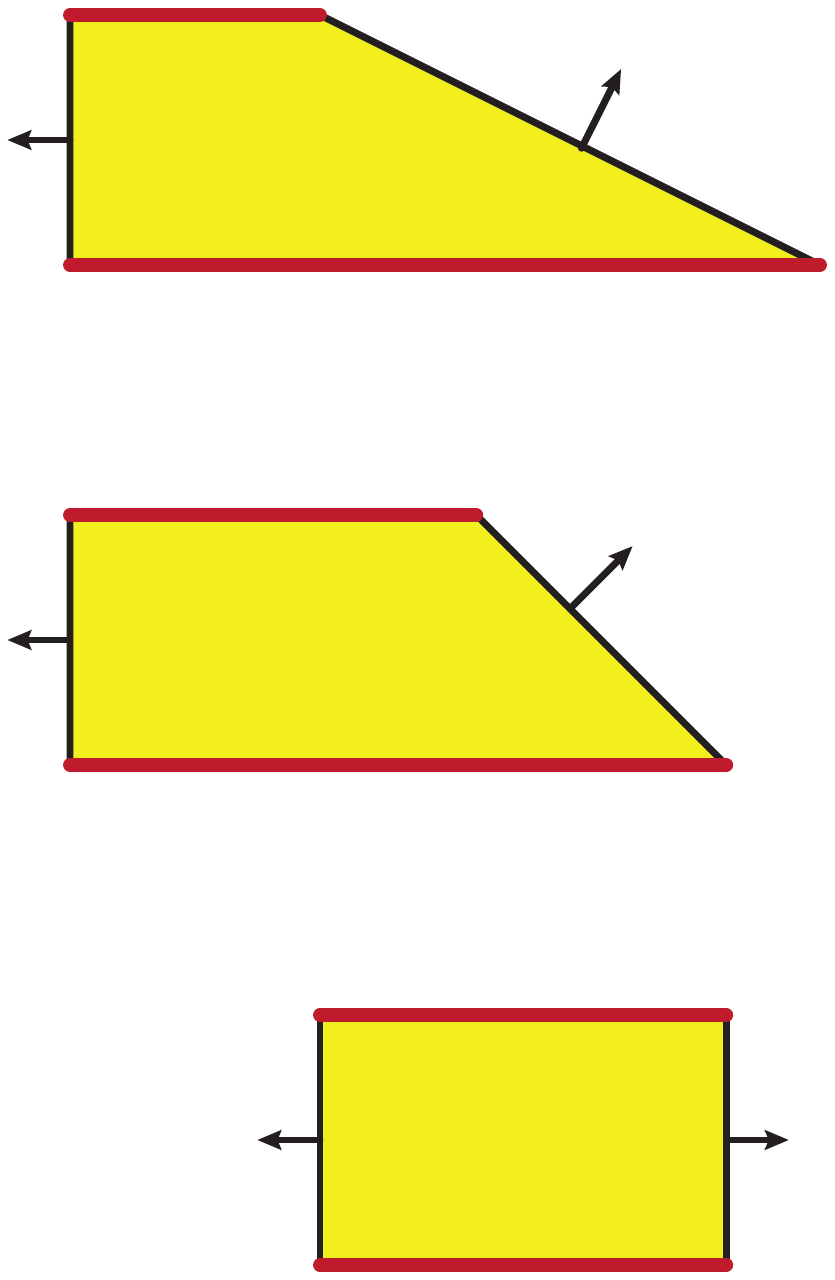}\tabularnewline \hline
$N/N_X$ & \centering $\Z$ & \centering $\{\id\}$ & \centering $\Z/k\Z$ \tabularnewline \hline
$\pi_1(M)$  &  \centering $\Z\times\Z$ & \centering $\Z$ & \centering $\Z/k\Z \times \Z$ \tabularnewline \hline
\end{tabular}\vskip 0.1in
\caption{Examples of the possible types of $N/N_X$.  In each case, the template graph has two vertices, connected
to one another by two edges.  Each quotient space has two facets, with facet normals indicated in the figures.}\label{table:hirz}
\end{table}
\renewcommand{\arraystretch}{1.2}

\begin{remark}\label{rmk:type L}
The form of the fundamental group of a toric origami manifold given by Theorem \ref{thm:fund group} 
excludes certain manifolds from admitting such a structure.
For example, a non-trivial finite cyclic group $\Z/k\Z$ cannot occur as the fundamental group of a toric 
origami manifold (as noted in the Proof of \cite[Corollary 3.2]{masuda-park} for the non-simply connected case). 
To verify this, we note that if $M$ is a toric origami manifold and has at least one cycle in its template graph, then
there must be at least one infinite cyclic factor in $\pi_1(M)$.  On the other hand, if the template graph is acyclic,
then $M$ is simply connected by Corollary~\ref{cor:simply connected}.
Orlik and Raymond introduced manifolds so-called  ``of type $L$'' as some of the building blocks for 4-manifolds admitting toric actions  \cite{orlikraymond}. More precisely, \cite[Theorem VI.1]{pao} states that every orientable compact smooth 4-manifold that admits an effective smooth action of $\T^2$ with at least one fixed point is diffeomorphic to a connected sum of copies of $\SS^4$, $\C P^2$ , $\overline{\C P^2}$, $\SS^2\times\SS^2$, $\SS^1\times\SS^3$, $L_n$ and $L_n'$, for $n\geq 2$. The fundamental group of the manifolds of type $L$ is $\pi_1(L_n)=\pi_1(L_n')=\Z/n\Z$ (see \cite[p.\ 296]{pao}), which implies that they do not admit a toric origami structure. It is easy to see that all the other building blocks do admit toric origami structures.
\end{remark}

\section{The cohomology of $M\setminus Z$}\label{sec:coh Y/B}

In this section we obtain results about the cohomology of open toric symplectic manifolds of the form $Y\setminus \mathcal{B}$, 
where $Y$ is a compact toric symplectic manifold and $\mathcal{B}$ is a (not necessarily connected) codimension two 
toric symplectic submanifold of $Y$. This is exactly the form that the connected components of $M\setminus Z$ take. 
In the following section, we will assemble these pieces in a Mayer-Vietoris sequence and deduce facts about the
cohomology of $M$. 

In this and the following section, we write
$$
b_i(X)=\rank \left(H_i^{\phantom{k}}(X;\Z)\right) \mbox{ and } b^i(X)=\rank \left(H^i(X;\Z)\right)
$$ 
to denote the $i^{\mathrm{th}}$ (respectively, homology and cohomology) Betti numbers of the space $X$.

We begin by stating a result about the Euler characteristic of a manifold $Y\setminus \mathcal{B}$. This fact is known in greater generality, see for example \cite[\S 4.5]{fulton}.

\begin{proposition}\label{co:euler}
The Euler characteristics of $Y$ and $\mathcal{B}$ are additive:
$$
\chi(Y\setminus\mathcal{B}) = \chi(Y)-\chi(\mathcal{B}).
$$
\end{proposition}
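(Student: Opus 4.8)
The plan is to use the additivity of the Euler characteristic for a reasonable decomposition of $Y$, together with the fact that $\mathcal{B}$ is a closed submanifold. First I would recall that for a finite CW pair $(Y, \mathcal{B})$ — and everything in sight is a compact toric symplectic manifold, hence a finite CW complex, with $\mathcal{B}$ a subcomplex up to homotopy — the Euler characteristic is additive over the pushout $Y = (Y\setminus \mathcal{U}) \cup \mathcal{U}$ where $\mathcal{U}$ is a closed tubular neighborhood of $\mathcal{B}$, with $(Y \setminus \mathcal{U}) \cap \mathcal{U}$ deformation retracting onto the unit normal sphere bundle $S(\nu)$ of $\mathcal{B}$ in $Y$. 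This gives $\chi(Y) = \chi(Y\setminus\mathcal{U}) + \chi(\mathcal{U}) - \chi(S(\nu))$.

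Next I would identify each piece. The neighborhood $\mathcal{U}$ deformation retracts onto $\mathcal{B}$, so $\chi(\mathcal{U}) = \chi(\mathcal{B})$. The open complement $Y\setminus\mathcal{B}$ deformation retracts onto the compact manifold-with-boundary $Y\setminus \mathring{\mathcal{U}}$, so $\chi(Y\setminus\mathcal{B}) = \chi(Y\setminus\mathring{\mathcal{U}}) = \chi(Y\setminus\mathcal{U})$ (the latter two differ only by the boundary $S(\nu)$, but a compact manifold with boundary has the homotopy type of its interior). The key vanishing is $\chi(S(\nu)) = 0$: since $\mathcal{B}$ has real codimension two, $\nu$ is a rank-two real bundle, and its unit sphere bundle is an $S^1$-bundle over $\mathcal{B}$; the Euler characteristic of the total space of a fiber bundle is the product of the Euler characteristics of base and fiber (by the multiplicativity of $\chi$ for fibrations, or by a Leray--Hirsch/Gysin argument), and $\chi(S^1) = 0$. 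Combining, $\chi(Y) = \chi(Y\setminus\mathcal{B}) + \chi(\mathcal{B}) - 0$, which rearranges to the claimed formula.

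I would remark that $\mathcal{B}$ having several connected components causes no trouble: one takes disjoint tubular neighborhoods and $\chi$ is additive over disjoint unions, so the argument applies componentwise. One could alternatively invoke the general statement cited in the excerpt (\cite[\S 4.5]{fulton}) for the Euler characteristic of a constructible set, but since the splitting $Y = (Y\setminus\mathcal{B}) \sqcup \mathcal{B}$ of $Y$ as a set of a closed piece and its open complement is already enough, the Mayer--Vietoris / tubular neighborhood argument above is self-contained and elementary.

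The main obstacle — really the only point requiring care — is justifying $\chi(S(\nu)) = 0$ and, relatedly, that $Y\setminus\mathcal{B}$ has the homotopy type of the compact manifold $Y\setminus\mathring{\mathcal{U}}$. Both are standard, but one should be careful that the cited multiplicativity of Euler characteristic over fibrations applies (it does, for fibrations with finite-type fibers over a finite-type base, which holds here since $\mathcal{B}$ is a compact manifold); alternatively one avoids fibration subtleties entirely by using the Gysin sequence of the oriented $S^1$-bundle $S(\nu)\to\mathcal{B}$, whose long exact sequence in rational cohomology forces the alternating sum of Betti numbers of $S(\nu)$ to telescope to $0$. Everything else is bookkeeping with Mayer--Vietoris and deformation retracts.
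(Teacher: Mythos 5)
Your proof is correct, but it takes a genuinely different route from the paper's. The paper works with the long exact sequence of the pair $(Y,Y\setminus\mathcal{B})$ in homology, replaces the relative terms via the duality isomorphism $H_*(Y,Y\setminus\mathcal{B})\cong H^{2n-*}(\mathcal{B})\cong H_{*-2}(\mathcal{B})$ (citing \cite[Proposition 3.46]{hatcher:AT} plus Poincar\'e duality on $\mathcal{B}$), and then takes the alternating sum of ranks along the exact sequence; the degree shift by $2$ is what makes the sign come out right. You instead decompose $Y$ as the union of a closed tubular neighborhood $\mathcal{U}$ of $\mathcal{B}$ and the complement of its interior, apply Mayer--Vietoris additivity of $\chi$, and kill the correction term by observing that the intersection retracts onto the unit normal circle bundle $S(\nu)\to\mathcal{B}$, whose Euler characteristic vanishes since $\chi(\SS^1)=0$. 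Both arguments are sound and both use the codimension hypothesis in an essential way (you via the circle fiber, the paper via the even degree shift). Your version is more elementary and self-contained; the paper's has the advantage that the duality isomorphism it invokes is exactly the one it develops further in Lemma~\ref{le:coAlexander} and then exploits to split the long exact sequence into the four-term sequences \eqref{4ESpairYB}, so the Euler-characteristic computation comes essentially for free from machinery the section needs anyway. The points you flag as requiring care --- that $Y\setminus\mathcal{B}$ has the homotopy type of the compact manifold $Y\setminus\mathring{\mathcal{U}}$, and that $\chi$ is multiplicative for the circle bundle (or, more robustly, the Gysin sequence argument) --- are indeed the only delicate steps, and your treatment of them is adequate.
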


\begin{proof}
We consider the long exact sequence for the pair $(Y,Y\setminus\mathcal{B})$,
with integer coefficients understood:
\begin{equation}\label{LES:euler}
\xymatrix{
\cdots\ar[r]  & H_{*}(Y\setminus\mathcal{B})\ar[r] & H_{*}(Y)\ar[r] &H_{*}(Y,Y\setminus\mathcal{B})\ar[r]
&  H_{*-1}(Y\setminus\mathcal{B})\ar[r] & \cdots.
}
\end{equation}
By \cite[Proposition 3.46]{hatcher:AT}, noting that $\mathcal{B}$ is compact and locally contractible and $Y$ is an orientable manifold, and by Poincar\'e duality for the manifold $\mathcal{B}$, we can replace the relative terms:
\begin{equation}\label{eq:hatcher and poincare duality}
H_{*}(Y,Y\setminus\mathcal{B})\cong H^{2n-*}(\mathcal{B})\cong H_{*-2}(\mathcal{B}).
\end{equation}

For simplicity of bookkeeping, we write $b_{-2}(\mathcal{B})=b_{-1}(\mathcal{B})=0$, since these ranks correspond, via the Poincar\'e duality in (\ref{eq:hatcher and poincare duality}), to
$$H_{-2}(\mathcal{B})\cong H^{2n}(\mathcal{B}) =0\mbox{ and } H_{-1}(\mathcal{B})\cong H^{2n-1}(\mathcal{B}) =0.$$
Now taking the alternating sum of the ranks of the terms in the sequence \eqref{LES:euler}, we obtain:
\begin{align*}
0&=\sum_{k=0}^{2n}(-1)^k \left[b_k(Y\setminus\mathcal{B})-b_k(Y)+b_{k-2}(\mathcal{B})\right]\\
&= \sum_{k=0}^{2n}(-1)^k b_k(Y\setminus\mathcal{B}) - \sum_{k=0}^{2n}(-1)^k b_k(Y)+\sum_{j=0}^{2n-2}(-1)^j b_{j}(\mathcal{B})\\
&=\chi(Y\setminus\mathcal{B}) -\chi(Y)+\chi(\mathcal{B}).
\end{align*}
This completes the proof.
\end{proof}

We now prove a Lemma related to \cite[Proposition 3.46]{hatcher:AT} that is a dual version of what is commonly called Alexander-Lefschetz duality.  We adapt the very explicit proof 
given by M\o ller \cite[Theorem~4.92]{moller} to this dual version, 
taking into account our special case that $\mathcal{B}$
is a submanifold of $Y$, suitably oriented.

\begin{lemma}\label{le:coAlexander}
There is an isomorphism $H^{j}(Y,Y\setminus\mathcal{B};\Z) \cong H_{2n-j}(\mathcal{B};\Z)$.
\end{lemma}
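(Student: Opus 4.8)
The plan is to imitate the proof of Alexander–Lefschetz duality as presented by M\o ller, but in the cohomological direction, exploiting the fact that $\mathcal{B}$ is a closed, suitably oriented submanifold of the closed oriented manifold $Y$. First I would invoke excision together with the tubular neighborhood theorem: a tubular neighborhood $\mathcal{U}$ of $\mathcal{B}$ in $Y$ deformation retracts onto $\mathcal{B}$, and $\mathcal{U}\setminus\mathcal{B}$ is the total space of the associated sphere bundle (here a circle bundle, since $\mathcal{B}$ has codimension $2$), so excising the complement of $\mathcal{U}$ gives $H^{j}(Y,Y\setminus\mathcal{B};\Z)\cong H^{j}(\mathcal{U},\mathcal{U}\setminus\mathcal{B};\Z)$, i.e.\ the cohomology of the Thom space of the normal bundle $\nu$ of $\mathcal{B}$ in $Y$. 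The orientation data we have — $Y$ is orientable and $\mathcal{B}$ is a symplectic, hence oriented, submanifold — guarantees that $\nu$ is an oriented rank-$2$ bundle, so the Thom isomorphism theorem applies.

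The key step is then the Thom isomorphism: cup product with the Thom class $u\in H^{2}(\mathcal{U},\mathcal{U}\setminus\mathcal{B};\Z)$ gives an isomorphism
\[
H^{j}(\mathcal{U},\mathcal{U}\setminus\mathcal{B};\Z)\;\cong\;H^{j-2}(\mathcal{B};\Z).
\]
Composing, we obtain $H^{j}(Y,Y\setminus\mathcal{B};\Z)\cong H^{j-2}(\mathcal{B};\Z)$. Finally, since $\mathcal{B}$ is a closed oriented manifold of dimension $2n-2$ (each connected component is a toric symplectic submanifold of real dimension $2n-2$, so Poincaré duality applies componentwise and hence to the disjoint union), Poincaré duality gives $H^{j-2}(\mathcal{B};\Z)\cong H_{(2n-2)-(j-2)}(\mathcal{B};\Z)=H_{2n-j}(\mathcal{B};\Z)$, which is the desired identification. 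One can alternatively package the whole argument by feeding this into the already-cited M\o ller proof \cite[Theorem~4.92]{moller} of Alexander duality and dualizing; the mechanics are the same.

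The main obstacle, and the place where the hypotheses are genuinely used, is getting the orientations and the Thom class to line up so that the isomorphism is canonical (or at least well-defined) rather than merely abstract: one must check that the normal bundle $\nu$ is orientable — which follows from $Y$ being oriented and $\mathcal{B}$ being a symplectically (hence canonically) oriented submanifold, so that $T Y|_{\mathcal{B}}\cong T\mathcal{B}\oplus\nu$ forces a compatible orientation on $\nu$ — and that the local-contractibility hypothesis needed for \cite[Proposition~3.46]{hatcher:AT} is automatic here since $\mathcal{B}$ is a smooth compact submanifold. No point-set pathology arises, so beyond this bookkeeping the argument is routine; the substance is simply the combination excision $+$ Thom isomorphism $+$ Poincaré duality on $\mathcal{B}$.
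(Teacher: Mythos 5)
Your argument is correct, but it takes a genuinely different route from the paper's. You factor the isomorphism as excision onto a tubular neighborhood, followed by the Thom isomorphism for the oriented rank-$2$ normal bundle $\nu$ of $\mathcal{B}$ in $Y$ (giving $H^{j}(Y,Y\setminus\mathcal{B};\Z)\cong H^{j-2}(\mathcal{B};\Z)$), followed by Poincar\'e duality on the closed oriented $(2n-2)$-manifold $\mathcal{B}$. The paper instead adapts M\o ller's proof of Alexander--Lefschetz duality directly: it builds a commutative ladder between the cohomology long exact sequence of the pair $(Y,Y\setminus\mathcal{B})$ and the homology long exact sequence of $(Y,U)$ for a neighborhood $U$ of $\mathcal{B}$, with vertical maps given by cap product with the orientation class of $Y$, the relative orientation class of $Y\setminus U$, and the relative class $\mu_{\mathcal{B}}\in H_{2n}(U,U\setminus\mathcal{B};\Z)$; it then passes to the direct limit over such neighborhoods and applies the Five Lemma, using that the outer vertical maps become Poincar\'e duality isomorphisms. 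Your route is more elementary and self-contained given that $\mathcal{B}$ is a smooth compact submanifold with an oriented normal bundle, and it avoids the limit over neighborhoods entirely; your orientation bookkeeping via $TY|_{\mathcal{B}}\cong T\mathcal{B}\oplus\nu$ is exactly the right check. What the paper's approach buys, beyond working for more general compact locally contractible subsets, is the explicit identification of the map $H_{2n-j}(\mathcal{B};\Z)\to H_{2n-j}(Y;\Z)$ in the resulting ladder as the map induced by inclusion --- this is used immediately afterwards (the map labelled with a star in diagram (4.4)) to compute $\widetilde{\varphi}_k$ on Chow/cohomology classes. If you adopt your proof, you would still need to verify separately that under your composite identification the connecting map $H^{2k}(Y,Y\setminus\mathcal{B})\to H^{2k}(Y)$ corresponds to the pushforward $i_!$ (equivalently, inclusion on homology after Poincar\'e duality), which is standard but not free. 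One small phrasing correction: the Thom isomorphism goes from $H^{j-2}(\mathcal{B};\Z)$ to $H^{j}(\mathcal{U},\mathcal{U}\setminus\mathcal{B};\Z)$ via $x\mapsto\pi^{*}(x)\cup u$; you state it in the inverse direction, which is harmless since it is an isomorphism, but worth stating carefully.
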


\begin{proof}
Let $U$ be an open neighborhood of $\mathcal{B}$ in $Y$.
We begin by recalling the particulars of cap products.  Both $Y$ and $\mathcal{B}$ are 
$\Z$-orientable manifolds,
and so we have the following maps induced by taking a cap product with appropriate orientation
classes.
\begin{enumerate}
\item[\ding{172}] For the compact manifold $Y$, we have orientation class $\mu_Y\in H_{2n}(Y;\Z)$,
which gives
$$
\xymatrix{
H^j(Y;\Z)\ar[rr]^(0.45){\mu_Y\cap -} & & H_{2n-j}(Y;\Z).
}
$$

\item[\ding{173}] For the manifold with boundary $Y\setminus U$, we have the relative orientation class 
$$
\mu_{Y\setminus U}\in H_{2n}(Y\setminus \mathcal{B}, U\setminus  \mathcal{B};\Z),
$$
which gives
$$
\xymatrix{
H^j(Y\setminus \mathcal{B};\Z)\ar[rr]^(0.4){\mu_{Y\setminus U}\cap -} &  & H_{2n-j}(Y\setminus \mathcal{B}, U\setminus  \mathcal{B};\Z).
}
$$

\item[\ding{174}] For the compact manifold $\mathcal{B}$, we have the relative orientation class 
$$
\mu_{\mathcal{B}}\in H_{2n}(U, U\setminus  \mathcal{B};\Z).
$$
By pre-composing with the excision isomorphism, we have
$$
\xymatrix{
H^j(Y,Y\setminus\mathcal{B};\Z) \cong H^j(U,U\setminus\mathcal{B};\Z) \ar[rr]^(0.65){\mu_{\mathcal{B}}\cap -} 
& & H_{2n-j}(U;\Z).
}
$$
\end{enumerate}

We use these maps to produce a diagram, with integer coefficients,
$$\small{
\xymatrix{
\cdots\ar[r] & H^{j-1}(Y\setminus\mathcal{B})\ar[r]\ar[d]_{\mbox{\ding{173}}} & H^j(Y,Y\setminus\mathcal{B})\ar[r]\ar[d]_{\mbox{\ding{174}}} & H^j(Y)\ar[r]\ar[d]_{\mbox{\ding{172}}} & H^j(Y\setminus\mathcal{B})\ar[r]\ar[d]_{\mbox{\ding{173}}} & \cdots \\
\cdots\ar[r] & H_{2n-(j-1)}(Y\setminus\mathcal{B}, U\setminus\mathcal{B})\ar[r] & H_{2n-j}(U)\ar[r] & H_{2n-j}(Y)\ar[r] & H_{2n-j}(Y\setminus\mathcal{B}, U\setminus\mathcal{B})\ar[r] & \cdots 
}}
$$
where the top row is the long exact sequence of the pair $(Y,Y\setminus\mathcal{B})$, and the bottom
row is the long exact sequence of the pair $(Y,U)$ where the terms $H_k(Y,U)$ are replaced by
$H_k(Y\setminus\mathcal{B}, U\setminus\mathcal{B})$ via excision.
This diagram commutes: the ``up to sign" discrepancy in \cite[Proof of Theorem~4.92]{moller} 
disappears because we may
choose $Y$ and $\mathcal{B}$ to be compatibly oriented.

We next take a limit over the poset $\mathcal{U}$ of neighborhoods $U$ containing $\mathcal{B}$
to obtain a limit diagram
\begin{equation}\label{eq:coAlexander}
\begin{array}{c}
\xymatrix{
\cdots\ar[r] & H^{j-1}(Y\setminus\mathcal{B})\ar[r]\ar[d]_{\mbox{\ding{183}}} & H^j(Y,Y\setminus\mathcal{B})\ar[r]\ar[d]_{\mbox{\ding{184}}} & H^j(Y)\ar[r]\ar[d]_{\mbox{\ding{182}}} & H^j(Y\setminus\mathcal{B})\ar[r]\ar[d]_{\mbox{\ding{183}}} & \cdots \\
\cdots\ar[r] & H_{2n-(j-1)}(Y\setminus\mathcal{B})\ar[r] & H_{2n-j}(\mathcal{B})\ar[r]_{\mbox{\ding{74}}} & H_{2n-j}(Y)\ar[r] & H_{2n-j}(Y\setminus\mathcal{B})\ar[r] & \cdots 
}
\end{array}
\end{equation}
which still commutes, and the bottom sequence remains exact under the limit.  We observe that the
map \ding{74} is induced by inclusion. We also note that
\ding{182} and \ding{183} are Poincar\'e Duality isomorphisms.  We now apply the Five Lemma to
deduce that \ding{184} is an isomorphism, completing the proof.
\end{proof}

We return to the long exact sequence of the pair $(Y,Y\setminus\mathcal{B})$, which is the top row in the diagram \eqref{eq:coAlexander}, with integer coefficients understood. The toric symplectic manifold $Y$ has cohomology concentrated in even degrees, up to 
degree $2n$. The space $\mathcal{B}$ is a disjoint union of toric symplectic manifolds, therefore its homology is concentrated in even degrees up to degree $2n-2$. 
Then by Lemma~\ref{le:coAlexander} the long exact
sequence splits into $4$-term exact sequences, with integer coefficients,
\begin{equation}\label{4ESpairYB}
0\to H^{2k-1}(Y\setminus\mathcal{B}) \to H^{2k}(Y,Y\setminus\mathcal{B}) \stackrel{\varphi_k}{\to} H^{2k}(Y) \to
H^{2k}(Y\setminus\mathcal{B}) \to 0.
\end{equation}
Thus, we may always identify $H^{2k-1}(Y\setminus\mathcal{B}) \cong \ker(\varphi_k)$ and 
$H^{2k}(Y\setminus\mathcal{B})\cong \mathrm{coker}(\varphi_k)$.

Let us now look more carefully at the map $\varphi_k$.  We have a
diagram
\begin{equation}\label{eq:rectangle}
\begin{array}{c}
\xymatrix{
 H^{2k}(Y,Y\setminus\mathcal{B})\ar[r]^(.6){\varphi_k}\ar[d]_{\mbox{\ding{184}}} & H^{2k}(Y)\ar[d]^{\mbox{\ding{182}}}  \\
 H_{2n-2k}(\mathcal{B})\ar[r]_{\mbox{\ding{74}}}\ar[d]_{\cong}^{PD} & H_{2n-2k}(Y)\ar[d]_{\cong}^{PD}\\
 H^{2k-2}(\mathcal{B})\ar@{-->}[r]_{\widetilde{\varphi}_k} & H^{2k}(Y)
}
\end{array},
\end{equation}
where all vertical maps are isomorphisms, and we define $\widetilde{\varphi}_k$ to be the map that makes the 
bottom square commute.
Recall from the comments after \eqref{eq:coAlexander} that \ding{74} is the natural map induced by the 
inclusion $i:\mathcal{B}\into Y$.  The homology groups of $Y$ and $\mathcal{B}$ are isomorphic to the 
Chow homology groups of those varieties.  The Chow groups of smooth toric varieties are very explicitly 
understood: they are spanned by classes, one for each $\T$-invariant subvariety.  A subvariety in $\mathcal{B}$
may be regarded as a subvariety of $Y$, and so the map \ding{74} maps the corresponding class on $\mathcal{B}$
to the class on $Y$.

When we apply Poincar\'e duality, we have very explicit presentations of the cohomology rings
$H^*(Y;\Z)$ and $H^*(\mathcal{B};\Z)$ as the face rings of the corresponding polytopes, modulo linear
relations.  That is, when the moment polytope $\Delta_Y$ for $Y$ has facets $F_1,\dots, F_d$, we may describe
\begin{equation}\label{eq:stanley-reisner}
H^*(Y;\Z) \cong \frac{\Z[y_1,\dots,y_d]}{\left\langle \begin{array}{c} \prod_{i\in I} y_i\ \Big|\ \bigcap_{i\in I} F_i = \emptyset \\ 
+\mbox{ linear terms}\end{array}
 \right\rangle},
\end{equation}
where each $y_i$ has degree $2$, and is the Poincar\'e dual of the codimension $2$ toric symplectic submanifold corresponding
to the facet $F_i$. The linear terms are determined by the geometry of the normal 
vectors to the facets.  We note, for bookkeeping purposes, that there are precisely $n=\frac{1}{2}\dim(Y)$ 
independent linear relations. That is, $\rank(H^2(Y)) = d-n$.
Equation~\eqref{eq:stanley-reisner} is the content of the Danilov-Jurkiewicz Theorem, which is carefully described
in \cite[Theorem~12.4.4]{coxlittleschenck}.  

For a connected component $B_s\subset \mathcal{B}$,
the moment image $\Delta_{B_s}$ of $B_s$ is one of the facets $F_s$.  The facets of $\Delta_{B_s}$
are each an intersection $F_s\cap F_j$, and so as above, we may describe
$$
H^*(B_s;\Z) \cong \frac{\Z[b_{j_1},\dots,b_{j_m}]}{\left\langle \begin{array}{c} \prod_{i\in I} b_{j_i}\ \Big|\ \left(\bigcap_{i\in I} F_{j_i}\right)\cap F_s = \emptyset \\ 
+\mbox{ linear terms}\end{array}
 \right\rangle}.
$$
Because the $y_i$ and $b_i$ are Poincar\'e duals to explicit submanifolds of $Y$ and $B_s$ respectively, and because \ding{74}
is induced by inclusion,
we may derive an explicit formula for $\widetilde{\varphi}_k$.
For the component $B_s\subset \mathcal{B}$ and a single
monomial $ \prod_{i\in I} b_{j_i}\in H^{2k-2}(B_s;\Z)$,
$$
\widetilde{\varphi}_k \left( \prod_{i\in I} b_{j_i} \right) = y_s\cdot \prod_{i\in I} y_{j_i}.
$$
This is not a ring map, as expected.

The following definition extends the notion of prismatic origami manifold 
(Definition~\ref{def:prismatic origami}) to a wider context.  Let $A$ be an open toric symplectic manifold 
with open moment polytope $\Delta_A$.  The lattice $N_{\Delta_A}$ is the sublattice of $N$ spanned
by the normal vectors to the facets of $\Delta_A$.

\begin{definition}
An open toric symplectic manifold $A$ with moment polytope $\Delta_A$ is  
{\bf prismatic}  if the quotient of lattices $N/N_{\Delta_A}$ is $\Z$.
\end{definition}

We now turn to $\widetilde{\varphi}_1:H^0(\mathcal{B};\Z)\to H^2(Y;\Z)$. The group 
$H^0(\mathcal{B};\Z)\cong\Z^r$ has one generator for each connected component of 
$\mathcal{B}$, each corresponding to a facet in $\Delta_Y$. The group $H^2(Y;\Z)$ 
has one generator for each facet of $\Delta_Y$, modulo linear relations. 
By our explicit description above, the map 
$\widetilde{\varphi}_1$ takes the generator of $H^0(\mathcal{B})$ corresponding 
to a facet $F_s$ of the polytope $\Delta_Y$ to the generator  $y_s\in H^2(Y;\Z)$ 
corresponding to the same facet.   
We may use our explicit description of $\widetilde{\varphi}_1$ to determine 
$\ker(\widetilde{\varphi}_1)\cong H^1(Y\setminus \mathcal{B};\Z)$  in general.

\begin{lemma}\label{le:kernel of map prismatic vs not prismatic}
The kernel of the map $\widetilde{\varphi_1}:H^0(\mathcal{B};\Z)\to H^2(Y;\Z)$ is $\ker(\widetilde{\varphi}_1)\cong\mathbb{Z}$ if $Y\setminus\mathcal{B}$ is prismatic and trivial otherwise.
\end{lemma}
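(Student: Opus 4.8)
The plan is to analyze the map $\widetilde{\varphi}_1 : H^0(\mathcal{B};\Z) \to H^2(Y;\Z)$ using the explicit combinatorial description just established. Write $\Delta_Y$ for the moment polytope of $Y$, with facets $F_1,\dots,F_d$ and primitive outward normals $v_1,\dots,v_d \in N$. The connected components of $\mathcal{B}$ correspond to a subset $S \subseteq \{1,\dots,d\}$ of facets (those that are fold facets of the template, i.e.\ the ones cut off to form $M \setminus Z$), so $H^0(\mathcal{B};\Z) \cong \Z^{|S|}$ with basis the classes $[B_s]$, $s \in S$. By the Danilov--Jurkiewicz presentation \eqref{eq:stanley-reisner}, $H^2(Y;\Z)$ is the quotient of $\Z\langle y_1,\dots,y_d\rangle$ by the $n$ independent linear relations $\sum_{i=1}^d \langle v_i, \xi\rangle\, y_i = 0$ for $\xi$ ranging over a basis of $N^*$. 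The map sends $[B_s] \mapsto y_s$. So $\ker(\widetilde{\varphi}_1)$ consists of integer combinations $\sum_{s\in S} c_s\, y_s$ that are identically zero in $H^2(Y;\Z)$, i.e.\ $\Z$-linear combinations of the relation vectors supported on $S$.

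First I would reformulate this as a lattice statement. Consider the map $\psi : N \to \Z^{|S|}$ sending $\xi \mapsto (\langle v_s,\xi\rangle)_{s\in S}$; dually, the relations supported on $S$ that hold in $H^2(Y;\Z)$ are exactly the image of $\psi^* $ — more precisely, $\ker(\widetilde{\varphi}_1)$ is the group of elements of $\Z^{|S|}$ that extend to a global linear relation among all $d$ normals, which (since the $y_i$ for $i\notin S$ are unconstrained) forces the coordinates outside $S$ to vanish, so $\ker(\widetilde{\varphi}_1) \cong \{\ell \in N^* \mid \langle v_i, \ell\rangle = 0 \text{ for all } i \notin S\}$. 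Equivalently, letting $N_{\mathcal B}$ denote the sublattice of $N$ spanned by $\{v_i : i\notin S\}$ — which is precisely $N_{\Delta_A}$ for $A = Y\setminus\mathcal{B}$, the open toric symplectic manifold whose facet normals are the non-fold facets — we get $\ker(\widetilde{\varphi}_1) \cong \mathrm{Hom}(N/N_{\mathcal B}, \Z)$, the free part of $N/N_{\Delta_A}$. Since $N/N_{\Delta_A}$ is trivial, finite cyclic, or infinite cyclic (as in Remark~\ref{rmk:pi1 for trivial}), this Hom group is $\Z$ exactly when $N/N_{\Delta_A} \cong \Z$, i.e.\ when $Y\setminus\mathcal{B}$ is prismatic, and is $0$ otherwise (trivial or torsion case). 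Note the torsion case indeed gives $0$ and not torsion, because $H^2(Y;\Z)$ is torsion-free, so the kernel is a subgroup of the torsion-free group $\Z^{|S|}$.

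I would then wrap this up by checking the two directions match the definition cleanly. If $Y\setminus\mathcal{B}$ is prismatic, then by definition $N/N_{\Delta_A} \cong \Z$, so there is (up to sign and scaling to a primitive generator) a unique-up-to-scalar $\ell \in N^*$ killing all non-fold normals; this $\ell$ restricted to the fold normals gives a generator of $\ker(\widetilde{\varphi}_1) \cong \Z$. Conversely, if $\ker(\widetilde{\varphi}_1)$ is nontrivial, it is a nonzero free abelian group sitting inside the annihilator of $N_{\Delta_A}$, forcing $N_{\Delta_A}$ to have rank $< n$, hence $N/N_{\Delta_A}$ infinite; combined with the trichotomy it must be $\Z$, so $Y\setminus\mathcal{B}$ is prismatic and $\ker(\widetilde{\varphi}_1) \cong \Z$.

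The main obstacle I expect is bookkeeping: being careful that ``the linear relations in \eqref{eq:stanley-reisner} are exactly the span of the $v_i$-coordinate functionals'' is stated correctly, and that restricting a global linear relation to the coordinates in $S$ is injective on the relevant subgroup — i.e.\ that there is no global relation supported entirely off $S$ contributing noise. This is handled by the observation that $\rank H^2(Y) = d - n$ with the $n$ relations coming from a basis of $N^*$, so a relation vector lies in $\ker(\widetilde\varphi_1)$ iff the corresponding functional on $N$ annihilates every $v_i$ with $i \notin S$; the rest is linear algebra over $\Z$ together with the already-known trichotomy for lattice quotients.
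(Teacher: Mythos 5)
Your proposal is correct and takes essentially the same approach as the paper: both identify $\ker(\widetilde{\varphi}_1)$ with the lattice of linear functionals (equivalently, the paper's $(N_{\Delta_{Y\setminus\mathcal{B}}})^\perp$, your $\mathrm{Hom}(N/N_{\Delta_{Y\setminus\mathcal{B}}},\Z)$) annihilating the normals of the non-fold facets, and then conclude via the trivial/finite-cyclic/infinite-cyclic trichotomy for that lattice quotient. The only difference is cosmetic phrasing of the perpendicular lattice as a $\mathrm{Hom}$ group.
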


\begin{proof}
Without loss of generality, we may assume that $\mathcal{B}$ corresponds to the disjoint union of facets 
$F_1,\ldots,F_r$ of $\Delta_Y$. Let $u_1,\ldots,u_d$ be the primitive outward pointing normals to all the facets
of $\Delta_Y$. Then 
$$H^0(\mathcal{B};\Z)=\mathbb{Z}x_1\oplus\ldots\oplus\mathbb{Z}x_r \,\,\,\,\,\,\,\,\,\,\mbox{  and  }\,\,\,\,\,\,\,\,\,\, H^2(Y;\Z)=\mathbb{Z}y_1\oplus\ldots\oplus\mathbb{Z}y_d /J,$$
 where $J$ is the ideal of linear relations, which are $\sum_{i=1}^d\langle v,u_i\rangle y_i$, for all $v\in N$. 
Henceforth, we will abuse notation, and let $y_i$ denote the equivalence class $y_i+J\in H^2(Y;\Z)$.

The map $\widetilde{\varphi_1}$ is given by $\widetilde{\varphi_1}(x_i)=y_i$.
Therefore an element $\sum_{i=1}^r a_i x_i\in H^0(\mathcal{B};\Z)$ is in the kernel of $\widetilde{\varphi}_1$ 
if and only if there exists a $v\in N$ such that 
$$\sum_{i=1}^r a_i y_i=\sum_{i=1}^d \langle v,u_i\rangle  y_i.$$
Equivalently, $v\in N$ must satisfy
\begin{equation}\label{eq:conditions on v} 
\langle v,u_i\rangle =a_i \mbox{ for } i=1,\ldots,r \,\,\,\mbox{ and }\,\,\, \langle v,u_i\rangle =0 \mbox{ for } i=r+1,\ldots,d .
\end{equation}
The second half of \eqref{eq:conditions on v} means that $v\in(N_{\Delta_{Y\setminus\mathcal{B}}})^\perp$.

If $Y\setminus\mathcal{B}$ is not prismatic, then the lattice quotient $N/N_{\Delta_{Y\setminus\mathcal{B}}}$ is either trivial or finite cyclic, which implies that $(N_{\Delta_{Y\setminus\mathcal{B}}})^\perp$ is trivial and so $\ker(\widetilde{\varphi}_1)$ is trivial as well.

If $Y\setminus\mathcal{B}$ is prismatic, then $(N_{\Delta_{Y\setminus\mathcal{B}}})^\perp\cong\mathbb{Z}$ and a generator $v$ of $(N_{\Delta_{Y\setminus\mathcal{B}}})^\perp$ will provide a non-zero element $\sum_{i=1}^d \langle v,u_i\rangle  y_i\in J$, and therefore gives a  generator $\sum_{i=1}^r \langle v,u_i\rangle  x_i$ of  $\ker(\widetilde{\varphi}_1)$.
\end{proof}

\noindent We can now prove the main result of the section.

\begin{theorem}\label{prop:prismatic vs not prismatic}
Let $Y$ be a $2n$-dimensional toric symplectic manifold with moment polytope $\Delta_Y$, and
let $d$ denote the number of facets of $\Delta_Y$.
Let $\mathcal{B}$ be a non-empty
codimension $2$ toric symplectic submanifold with $r$ connected components.  We may compute
the following Betti numbers of $Y\setminus \mathcal{B}$.

\begin{center}
  \begin{tabular}{ r | c | c | }
    & $Y\setminus\mathcal{B}$ prismatic & $Y\setminus\mathcal{B}$ not prismatic\\ \hline    
    $b^0(Y\setminus\mathcal{B})$ & $1$ & $1$ \\ \hline
    $b^1(Y\setminus\mathcal{B})$ & $1$ & $0$ \\ \hline
    $b^2(Y\setminus\mathcal{B})$ & $d-n-1$ & $d-n-r$ \\ \hline
    $b^{2n-1}(Y\setminus\mathcal{B})$ & $1$ & $r-1$ \\ \hline
    $b^{2n}(Y\setminus\mathcal{B})$ & $0$ & $0$ \\ 

    \hline
  \end{tabular}
\end{center}
\end{theorem}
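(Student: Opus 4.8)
The plan is to combine the four-term exact sequences \eqref{4ESpairYB} at $k=1$ and $k=n$ with the explicit description of the connecting map given by the rectangle \eqref{eq:rectangle}. Since all vertical maps in \eqref{eq:rectangle} are isomorphisms, $\varphi_k$ has the same kernel and cokernel (up to isomorphism) as $\widetilde\varphi_k$, and \eqref{4ESpairYB} then identifies
\[
H^{2k-1}(Y\setminus\mathcal{B};\Z)\cong\ker(\widetilde\varphi_k)\quad\text{and}\quad H^{2k}(Y\setminus\mathcal{B};\Z)\cong\coker(\widetilde\varphi_k).
\]
Two facts about the target groups will be used. By the Danilov--Jurkiewicz presentation \eqref{eq:stanley-reisner}, $\rank H^2(Y)=d-n$, and $H^{2n}(Y)\cong\Z$ is generated by the product of the $n$ divisor classes at any vertex of $\Delta_Y$. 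Also, each connected component $B_s$ of $\mathcal{B}$ is a compact connected toric symplectic manifold of dimension $2n-2$, so $H^0(\mathcal{B};\Z)\cong H^{2n-2}(\mathcal{B};\Z)\cong\Z^r$, with the generator of $H^{2n-2}(B_s)\cong\Z$ equal to the product of the $n-1$ divisor classes at any vertex of the facet $F_s$.

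The extreme degrees are immediate: $Y\setminus\mathcal{B}$ is a connected (removing a codimension-two submanifold does not disconnect) non-compact $2n$-manifold, so $b^0(Y\setminus\mathcal{B})=1$ and $b^{2n}(Y\setminus\mathcal{B})=0$. For $b^1$ and $b^2$ I take $k=1$: the map $\widetilde\varphi_1\colon H^0(\mathcal{B})\cong\Z^r\to H^2(Y)$ sends the generator $x_s$ to the facet class $y_s$, so $b^1(Y\setminus\mathcal{B})=\rank\ker(\widetilde\varphi_1)$ equals $1$ if $Y\setminus\mathcal{B}$ is prismatic and $0$ otherwise, by Lemma~\ref{le:kernel of map prismatic vs not prismatic}; and since $\rank(\mathrm{im}\,\widetilde\varphi_1)=r-\rank\ker(\widetilde\varphi_1)$,
\[
b^2(Y\setminus\mathcal{B})=\rank H^2(Y)-\rank(\mathrm{im}\,\widetilde\varphi_1)=(d-n)-r+\rank\ker(\widetilde\varphi_1),
\]
which equals $d-n-r$ in the non-prismatic case. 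For $b^{2n-1}$ I take $k=n$: by the formula for $\widetilde\varphi_k$ recorded in the excerpt, the generator of $H^{2n-2}(B_s)$ --- the product of the $n-1$ divisor classes at a vertex $v$ of $F_s$ --- is sent to the product of the $n$ divisor classes at $v$ regarded as a vertex of $\Delta_Y$, hence to a generator of $H^{2n}(Y)\cong\Z$. Therefore $\widetilde\varphi_n\colon\Z^r\to\Z$ is surjective (as $\mathcal{B}\neq\emptyset$), re-confirming $b^{2n}=0$, and $\ker(\widetilde\varphi_n)\cong\Z^{r-1}$, so $b^{2n-1}(Y\setminus\mathcal{B})=r-1$.

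It remains to evaluate the prismatic column, which carries the real content. When $Y\setminus\mathcal{B}$ is prismatic I would rerun the geometric argument from the proof of Proposition~\ref{prop:YtimesT2}: the normals of the facets of $\Delta_Y$ adjacent to a facet of $\mathcal{B}$ lie in the hyperplane $(N_{\Delta_{Y\setminus\mathcal{B}}})_{\R}$, and the Delzant condition then forces $\Delta_Y$ to be a prism $F\times[a,b]$ whose two end facets are its only fold facets. Hence $\mathcal{B}$ consists of exactly those two facets, so $r=2$, and $F$ has $d-2$ facets. Substituting $r=2$ into the formulas above gives $b^1=1$, $b^2=(d-n)-2+1=d-n-1$ and $b^{2n-1}=1$, together with $b^0=1$ and $b^{2n}=0$. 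Equivalently, prismaticity gives a $\T$-equivariant homeomorphism $Y\cong Y_F\times\SS^2$ carrying $\mathcal{B}$ onto $Y_F$ times the two poles, so $Y\setminus\mathcal{B}\simeq Y_F\times\SS^1$, and the five Betti numbers follow from the Künneth formula and the vanishing of $H^{\mathrm{odd}}(Y_F;\Z)$. The step I expect to require the most care is this structural statement in the prismatic case --- in particular, that $r=2$ --- since everything else is bookkeeping with \eqref{4ESpairYB} and \eqref{eq:rectangle}.
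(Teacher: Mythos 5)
Your proposal is correct and follows essentially the same route as the paper: the four-term exact sequences \eqref{4ESpairYB} at $k=1$ and $k=n$, Lemma~\ref{le:kernel of map prismatic vs not prismatic} for $b^1$, rank counts for $b^2$ and $b^{2n-1}$, and the observation that prismaticity forces $r=2$ (which the paper asserts without rerunning the geometric argument, as you do). The only cosmetic difference is at $b^{2n}$, where the paper invokes Poincar\'e--Lefschetz duality for a manifold with boundary while you deduce it from the surjectivity of $\widetilde{\varphi}_n$ onto the point class of $H^{2n}(Y)$; both are valid and yield the same rank count for $b^{2n-1}$.
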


\begin{proof}
We begin by noting that if $Y\setminus \mathcal{B}$ is prismatic then $\mathcal{B}$ necessarily has 
$r=2$ connected components. Using this,
we will determine all rows simultaneously in the prismatic and non-prismatic cases.

\vskip 0.1in
\noindent \fbox{{\bf Computation of} $b^0(Y\setminus\mathcal{B})$.} The manifold $Y\setminus{B}$ is connected,
so $b^0(Y\setminus\mathcal{B})=1$. \hfill \ding{52}

\vskip 0.1in
\noindent \fbox{{\bf Computation of} $b^1(Y\setminus\mathcal{B})$.} Recall that 
$\ker(\widetilde{\varphi}_1)\cong H^1(Y\setminus \mathcal{B};\Z)$.
Lemma \ref{le:kernel of map prismatic vs not prismatic}  says $b^1(Y\setminus\mathcal{B})=1$ when $Y\setminus\mathcal{B}$
is prismatic, and $b^1(Y\setminus\mathcal{B})=0$ otherwise. \hfill \ding{52}

\vskip 0.1in
\noindent \fbox{{\bf Computation of} $b^2(Y\setminus\mathcal{B})$.} 
We now identify the terms of \eqref{4ESpairYB} in the case $k=1$:
$$\xymatrix{
0\ar[r] & H^{1}(Y\setminus\mathcal{B}) \ar[r]& H^{2}(Y,Y\setminus\mathcal{B})\cong H^{2}(\mathcal{B})\cong \Z^r  \ar[r] 
 & H^{2}(Y)\cong \Z^{d-n}\ar[r] &
H^{2}(Y\setminus\mathcal{B}) \ar[r] & 0
}.
$$
A dimension count  proves that
$$b^{2}(Y\setminus\mathcal{B})=b^{1}(Y\setminus\mathcal{B})+d-n-r.$$
Substituting $b^1(Y\setminus\mathcal{B})$, and $r=2$ in the prismatic case, completes the calculation.
\hfill \ding{52}

\vskip 0.1in
\noindent \fbox{{\bf Computation of} $b^{2n}(Y\setminus\mathcal{B})$.} 
We note that $Y\setminus\mathcal{B}$ is homotopy equivalent to a manifold $X$ with boundary $\mathcal{Z}$.
Poincar\'e duality for manifolds with boundary implies that
$$
H^{2n}(Y\setminus\mathcal{B};\Z)\cong H^{2n}(X;\Z) \cong H_0(X,\mathcal{Z};\Z).
$$
We note that relative cohomology $H_0(X,\mathcal{Z};\Z)\cong
\widetilde{H}_0((X/\mathcal{Z};\Z)$, and this is $0$ because
$X/\mathcal{Z}$ is connected.
\hfill \ding{52}

\vskip 0.1in
\noindent \fbox{{\bf Computation of} $b^{2n-1}(Y\setminus\mathcal{B})$.} 
We now identify the terms of \eqref{4ESpairYB} in the case $k=n$:
$$
0\to H^{2n-1}(Y\setminus\mathcal{B}) \to H^{2n}(Y,Y\setminus\mathcal{B})\cong H^{2n-2}(\mathcal{B})\cong \Z^r \to H^{2n}(Y) \cong \Z \to
H^{2n}(Y\setminus\mathcal{B})=0\to 0
$$
We have the rightmost  equality $H^{2n}(Y\setminus\mathcal{B})=0$ by the previous computation.  
A dimension count now proves that $b^{2n-1}(Y\setminus\mathcal{B})=r-1$.
Substituting $r=2$ in the prismatic case completes the calculation.
\hfill\hfill\hfill\hfill\hfill\hfill\hfill\hfill\hfill\hfill\hfill\hfill\hfill\hfill\hfill\hfill\hfill \ding{52}
\end{proof}

We note that when $n=2$, Theorem~\ref{prop:prismatic vs not prismatic} gives all the Betti numbers
of $Y\setminus\mathcal{B}$.  Even in higher dimensions, in specific examples, it is often tractable to compute the various maps
$\widetilde{\varphi}_k$, and to compute all of the Betti numbers, as well as torsion in the cohomology groups
$H^*(Y\setminus\mathcal{B};\Z)$.  We conclude the section with such an example.

\begin{example}\label{ex:truncated cube}
Let $Y$ be the toric variety $\C P^1\times\C P^1\times\C P^1$ blown up at one fixed point.  This has the moment polytope
shown in Figure~\ref{fig:truncated cube}.
\begin{figure}[ht]
\centering
{
\includegraphics[scale=0.5]{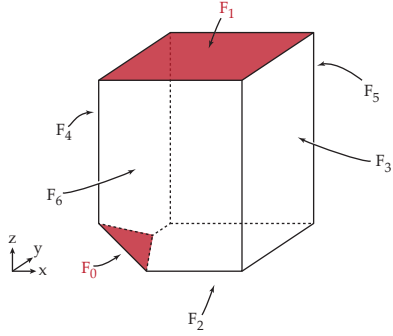} 
}
\caption{The moment map image for the $\T^3$ action on 
$\C P^1\times\C P^1\times\C P^1$ blown up at one fixed point.  The polytope is a cube truncated at one vertex.
}
\label{fig:truncated cube}
\end{figure}
In our calculations below, we will use the linear relations to simplify the presentations of the cohomology rings: that is,
we will use them to reduce the number of degree $2$ generators.  We have:
\begin{eqnarray*}
H^*(Y;\Z) & = &  \frac{\Z[y_0,y_1,y_2,y_3,y_4,y_5,y_6]}{\left\langle 
\begin{array}{c}
y_1y_2\ ,\ y_3y_4\ , \ y_5y_6\ , \
y_0y_1\ , \ y_0y_3\ , \ y_0y_5\ , \ y_2y_4y_6\\
-y_0+y_3-y_4\ , \ -y_0+y_5-y_6\ ,\ -y_0+y_1-y_2
\end{array}
\right\rangle}\\
& \cong &
\frac{\Z[y_0,y_2,y_4,y_6]}{\left\langle 
\begin{array}{c}
y_0^2-y_2^2\ ,\ y_0^2-y_4^2\ ,\ y_0^2-y_6^2\\
y_0^2+y_0y_2\ ,\ y_0^2+y_0y_4\ ,\ y_0^2+y_0y_6\ ,\ y_2y_4y_6\end{array}
\right\rangle}.
\end{eqnarray*}
We next compute the cohomology of $\mathcal{B}$:
\begin{eqnarray*}
H^*(B_0;\Z) & = & \frac{\Z[b_2,b_4,b_6]}{\left\langle b_2b_4b_6\ ,\ b_4-b_6\ ,\ b_2-b_4\right\rangle} \cong 
\frac{\Z[b_2]}{\left\langle b_2^3\right\rangle}; \mbox{ and}
\end{eqnarray*}
\begin{eqnarray*}
H^*(B_1;\Z) & = & \frac{\Z[b_3,b_4,b_5,b_6]}{\left\langle b_3b_4\ ,\ b_5b_6\ ,\ b_3-b_4\ ,\ b_5-b_6\right\rangle} \cong 
\frac{\Z[b_4,b_6]}{\left\langle b_4^2\ ,\ b_6^2\right\rangle}.
\end{eqnarray*}
Putting these two calculations together, and using $x_0$ and $x_1$ as degree zero dummy variable placeholders,
we have
$$
H^*(\mathcal{B};\Z) = x_0\frac{\Z[b_2]}{\left\langle b_2^3\right\rangle} \bigoplus x_1\frac{\Z[b_4,b_6]}{\left\langle b_4^2\ ,\ b_6^2\right\rangle}.
$$
We know that $\widetilde{\varphi}_k$ maps $b_i$ to $y_i$, and $x_i$ to $y_i$.  It is only a matter of bookkeeping to
compute
$$
\begin{array}{rclclcl}
H^0(Y\setminus\mathcal{B};\Z) & = & \Z & & & &  \\
H^1(Y\setminus\mathcal{B};\Z) & = & \ker(\widetilde{\varphi}_1) & = & 0 & & \\
H^2(Y\setminus\mathcal{B};\Z) & = & \coker(\widetilde{\varphi}_1) & = & \spanspan\{ y_4\ ,\ y_6\} & \cong & \Z^2\\
H^3(Y\setminus\mathcal{B};\Z) & = & \ker(\widetilde{\varphi}_2) & = & 0 & & \\
H^4(Y\setminus\mathcal{B};\Z) & = & \coker(\widetilde{\varphi}_2) & = & \spanspan\{ y_4y_6\} & \cong & \Z\\
H^5(Y\setminus\mathcal{B};\Z) & = & \ker(\widetilde{\varphi}_3) & = & \spanspan\{ x_0b_2^2-x_1b_4b_6\} & \cong &\Z \\
H^6(Y\setminus\mathcal{B};\Z) & = & \coker(\widetilde{\varphi}_3) & = & 0. & &
\end{array}
$$

\end{example}

\section{Assembling $H^*(M)$ from $H^*(M\setminus Z)$}\label{sec:coh results}

In this section, we study the cohomology of toric origami manifolds.
There are a number of cases where we can already deduce a number of facts about the cohomology of a toric
origami manifold.  We begin by reviewing these.

The first case is when the template graph is acyclic: this is the topic of or first paper \cite{holm-pires}.
In that case, the cohomology of $M$ is concentrated in even degrees and the equivariant cohomology 
is given by a GKM-type description as detailed in \cite{holm-pires}, or a Stanley-Reisner face ring 
\cite[Theorem~7.7]{masuda-panov}. Furthermore, the ring structure on $H^*(M;\Z)$ can be determined
completely from the discrete geometry of the orbit space, as described in \cite[Corollary~7.8]{masuda-panov}.

The second case where the cohomology ring is determined is when $M$ is prismatic. Then by 
Proposition~\ref{prop:YtimesT2}, $M$ is homeomorphic to $Y\times \T^2$, for a toric symplectic manifold $Y$.
Therefore the cohomology ring is determined by the K\"unneth formula, even over $\Z$ since the cohomology of each
factor is torsion-free.

We now focus on the non-prismatic case, where we can obtain some partial results even for the cyclic case.
We will use a Mayer-Vietoris sequence to obtain the Betti numbers of an arbitrary non-prismatic 
$4$-dimensional toric origami manifold.  
We first note that in general, 
$$H^0(M;\Z)\cong H^{2n}(M;\Z)=\mathbb{Z}$$ 
because $M$ is a connected $2n$-dimensional manifold.
Less trivially, $H^{2n-1}(M;\Z)\cong H_1(M;\Z)$ is the abelianization of $\pi_1(M)$. By Theorem~\ref{thm:fund group}, it is thus $N/N_X\times \mathbb{Z}^\ell$, where $\ell=1+R-L$ is the number of linearly independent cycles in the template graph, which has $L$ vertices and $R$ edges. We also note that the universal coefficients theorem then guarantees that $H^1(M;\Z)\cong \Z^\ell$ and that the torsion in $H^2(M;\Z)$ is precisely $N/N_X$. 

We now proceed with our Mayer-Vietoris sequence.  We enumerate the connected
components $A_1,\dots,A_L$ of $M\setminus Z$ and cover $M$ by open neighborhoods of each $A_i$.  These open 
neighborhoods may be chosen so that they deformation retract to the $A_i$, and their intersections deformation retract onto
components of $Z$.  The Mayer-Vietoris sequence, with integer coefficients, is
\begin{equation}\label{eq:MV}
{\small{ \xymatrix{
0\ar[r]  & H^0(M)\ar[r] & \bigoplus_{i=1}^L H^0(A_i)\ar[r] & H^0(Z)\ar[r] & H^1(M) \ar[r] & \bigoplus_{i=1}^L H^1(A_i) \ar[r] & H^1(Z) \ar `r[d]`[l] `[lllllld]`[d][dlllll]
&\\
& H^2(M)\ar[r] & \bigoplus_{i=1}^L H^{2}(A_i)\ar[r] &H^2(Z)\ar[r] & H^3(M)\ar[r] & \bigoplus_{i=1}^L H^{3}(A_i)\ar[r] &H^3(Z)  \ar `r[d]`[l] `[lllllld]`[d][dlllll]
& \\
& H^4(M) \ar[r] & \bigoplus_{i=1}^L H^4(A_i) \ar[r] & \ldots. &&&
}}}
\end{equation}
The techniques of Section~\ref{sec:coh Y/B} often allow us to compute the ranks of the terms $\bigoplus_{i=1}^L H^k(A_i)$.
The connected components of $Z$ are $\SS^1$-bundles over compact toric symplectic manifolds of dimension $2n-2$.  
In some cases, we may
identify these explicitly.  If we can make both of these computations, it may then be possible to determine the
Betti numbers of $M$.  In particular, when $2n=4$, we may complete all of these steps.  We will also include an
example in dimension $2n=6$.

\begin{theorem}\label{toric origami dim4}
Let $M$ be a $4$-dimensional toric origami manifold. If $M$ is prismatic, then it is homeomorphic to $\SS^2\times\T^2$ and its Betti numbers are
$$b^0(M)=b^4(M)=1 \mbox{ and }b^1(M)=b^2(M)=b^3(M)=2.$$
If $M$ is non-prismatic, let $L$ be the number of vertices and $R$ the number of edges of its template graph, and $M^{\T}$ denote the set of (isolated) fixed points. Then
$$
b^i(M) = \left\{\begin{array}{ll}
1 & i=0\  ,\ 4\\
1+R-L & i = 1\ ,\ 3\\
\# \left(M^{\T}\right)+2R-2L & i =2 
\end{array}\right.
$$
In particular, in both cases the Euler characteristic is $\chi(M) = \# \left(M^{\T}\right)$.
\end{theorem}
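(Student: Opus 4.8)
The plan is to split into the prismatic and non-prismatic cases, dispatching the prismatic case first since Proposition~\ref{prop:YtimesT2} already identifies $M$ with $\SS^2\times\T^2$ (a $4$-dimensional toric symplectic $Y=Y_F$ must be $\SS^2$), and the K\"unneth formula immediately gives the listed Betti numbers; the Euler characteristic is then $0$, which equals $\#(M^\T)$ since an $\SS^2\times\T^2$ has no $\T^2$-fixed points. So the substance is the non-prismatic case, where I would feed the computations of Section~\ref{sec:coh Y/B} into the Mayer--Vietoris sequence \eqref{eq:MV}. First I would record the easy entries: $b^0(M)=b^4(M)=1$ because $M$ is a connected $4$-manifold, and $b^1(M)=b^3(M)=1+R-L=\ell$ from the discussion preceding the theorem (Poincar\'e duality plus the fact that $H_1(M)$ is the abelianization of $\pi_1(M)\cong N/N_X\times\Z^\ell$, whose free rank is $\ell$). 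That leaves $b^2(M)$ as the only real unknown, and I would get it by an Euler characteristic bookkeeping argument rather than by chasing every map in \eqref{eq:MV}.

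The key computation is $\chi(M)$. I would write $M=M_+\sqcup_Z M_-$ — more precisely cover $M$ by open neighborhoods of the components $A_1,\dots,A_L$ of $M\setminus Z$ whose pairwise intersections retract onto the components of $Z$ — so that additivity of Euler characteristic gives
\[
\chi(M)=\sum_{i=1}^L \chi(A_i)-\sum_{\text{components } \mathcal Z_j\text{ of }Z}\chi(\mathcal Z_j).
\]
Each component $\mathcal Z_j$ is an $\SS^1$-bundle over a compact toric symplectic surface, hence over some $\SS^2$ or $\T^2$; in dimension $4$ the base is a toric symplectic surface so it is $\SS^2$, and an $\SS^1$-bundle over $\SS^2$ has $\chi=0$ (and the nonco\"orientable loop-edge case is excluded since $M$ is orientable). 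So all the fold terms vanish and $\chi(M)=\sum_i\chi(A_i)$. Now each $A_i$ is of the form $Y_i\setminus\mathcal B_i$ with $Y_i$ a toric symplectic surface and $\mathcal B_i$ a disjoint union of $\T$-fixed points (codimension-$2$ toric symplectic submanifolds of a surface are points), so by Proposition~\ref{co:euler}, $\chi(A_i)=\chi(Y_i)-\#\mathcal B_i = \#(Y_i^\T)-\#\mathcal B_i$. Summing over $i$, the points removed are exactly the vertices of the polytopes lying on fold facets; each fold facet $\Psi_E(e)$ of a $2$-dimensional polytope is an edge with two endpoints, and each such endpoint is removed once from the vertex count — with an edge $e$ of the template contributing $2$ removed vertices on each of its two adjacent polytopes, i.e.\ $4R$ in total if every edge is internal, but more carefully I should track dangling/loop edges; since $M$ has no boundary every edge is internal, contributing its two fold-facet vertices to each of its two (possibly equal, for a loop) adjacent polytopes. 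The upshot, after matching this against the well-known count $\#(M^\T)=\sum_i\#(Y_i^\T)-(\text{vertices killed by folding})$, is $\chi(M)=\#(M^\T)$.

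With $\chi(M)=\#(M^\T)$ in hand, I compute $b^2(M)$ from $\chi(M)=b^0-b^1+b^2-b^3+b^4 = 1-\ell+b^2-\ell+1 = 2-2\ell+b^2$, giving $b^2(M)=\#(M^\T)-2+2\ell = \#(M^\T)+2(1+R-L)-2 = \#(M^\T)+2R-2L$, which is the claimed formula; and the final sentence $\chi(M)=\#(M^\T)$ then holds in both cases. The main obstacle I anticipate is the honest bookkeeping in the Euler-characteristic additivity: getting the relation between $\sum_i\#(Y_i^\T)$, the number of fold-facet vertices removed, and $\#(M^\T)$ exactly right — in particular being careful that a vertex of $X$ which is an endpoint of a fold facet genuinely is \emph{not} a fixed point of $M$ (it lies in $Z$, whose points have nontrivial isotropy circles but no $\T$-fixed points on the $\SS^1$-fibers), whereas the honest vertices of $X$ (intersections of non-fold facets) correspond bijectively to $M^\T$. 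Everything else is a formal dimension count in the Mayer--Vietoris sequence, or can be bypassed entirely by the Euler characteristic route sketched above; one should double-check that no torsion or rank subtlety in \eqref{eq:MV} obstructs reading off $b^1(M)=b^3(M)=\ell$, but that is already guaranteed by the universal coefficients remark preceding the theorem.
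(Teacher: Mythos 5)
Your overall strategy is the same as the paper's: dispatch the prismatic case by Proposition~\ref{prop:YtimesT2} and K\"unneth, read off $b^0,b^4$ from connectedness, $b^1,b^3$ from $\pi_1(M)\cong N/N_X\times F_\ell$ (with $N/N_X$ finite in the non-prismatic case) together with Poincar\'e duality and universal coefficients, and then extract $b^2$ from an alternating-sum identity for the cover of $M$ by neighborhoods of the $A_i$. Where you differ is in how you evaluate that alternating sum: the paper computes every individual Betti number of every piece --- all four rows of Theorem~\ref{prop:prismatic vs not prismatic} for each $A_i$ (tracking the number $P$ of prismatic pieces), plus the identification of each component of $Z$ as $\SS^1\times\SS^2$, $\SS^3$ or a lens space (tracking the number $Q$ of $\SS^1\times\SS^2$'s) --- and then watches $P$ and $Q$ cancel in the long alternating sum of \eqref{eq:MV}. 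You instead collapse the whole computation to $\chi(M)=\sum_i\chi(A_i)-\chi(Z)$, kill the $Z$-terms because circle bundles over $\SS^2$ have vanishing Euler characteristic, and use only Proposition~\ref{co:euler} for the pieces. This is a genuine economy: it avoids Theorem~\ref{prop:prismatic vs not prismatic} entirely for this step and makes the cancellation of the $P$'s and $Q$'s automatic. Your final bookkeeping ($4R$ fold-facet vertices removed in total, and the identification of the surviving vertices of $X$ with $M^{\T}$) matches the paper's count $b^2(M)=\#\{\mbox{vertices of }X\}+2R-2L$.

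One statement in your argument is wrong as written, although it does not affect the numbers. For a $4$-dimensional $M$, each symplectic cut piece $Y_i$ is a $4$-manifold and $\mathcal{B}_i$ is a codimension-two toric symplectic submanifold, i.e.\ a disjoint union of toric $2$-spheres (one for each fold facet of the polytope $P_i$), \emph{not} a disjoint union of points; taken literally, your description of $A_i$ as $Y_i$ minus finitely many points would contradict Theorem~\ref{prop:prismatic vs not prismatic} (it would force $b^2(A_i)=b^2(Y_i)$ and $b^3(A_i)=0$). The Euler-characteristic computation survives because $\chi(\mathcal{B}_i)=\#(\mathcal{B}_i^{\T})$ equals exactly the number of vertices of $P_i$ lying on fold facets, which is the quantity $4R$ (summed over $i$) that you actually feed into the count. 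Replace ``$\mathcal{B}_i$ is a disjoint union of fixed points'' by ``$\chi(\mathcal{B}_i)$ equals the number of fold-facet vertices of $P_i$'' and the argument is correct and complete.
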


\begin{proof}
In the prismatic case, the result is a consequence of Proposition~\ref{prop:YtimesT2} and the K\"unneth formula. In this case, $\chi(M) = \# \left(M^{\T}\right)=0$.

We now turn to the non-prismatic case. Let $X=M/\T$ be the orbit space of $M$.  The fixed points $M^{\T}$ correspond to vertices of $X$ (not to be
confused with vertices of the template graph!).

We first consider the terms $ \bigoplus_{i=1}^L H^*(A_i)$ in \eqref{eq:MV}.
We begin by noting that when $\dim(M) = 4$, Theorem~\ref{prop:prismatic vs not prismatic} determines all the
Betti numbers of each  piece $A_i$.  
Let $P$ be the number of prismatic $A_i$'s. Note that for 2-dimensional polytopes the number of facets
(i.e.\ edges!) equals the number of vertices. A careful application of Theorem~\ref{prop:prismatic vs not prismatic} now gives us:
\begin{align*}
&\sum_{i=1}^Nb^1(A_i)=P\ ;\\
&\sum_{i=1}^Nb^2(A_i)=\#\big\{\text{vertices of $X$}\big\}+2R-2L+P;\\
&\sum_{i=1}^Nb^3(A_i)=2R-L;\\
&\sum_{i=1}^Nb^4(A_i)=0.
\end{align*}

We now turn to the terms $H^*(Z)$ in \eqref{eq:MV}.
When $\dim M = 2n=4$, each $\mathcal{Z}$ is an $\SS^1$-bundle over a toric symplectic 2-sphere, and 
is therefore diffeomorphic to $\SS^1\times \SS^2$, to $\SS^3$ or to a $3$-dimensional lens space $\mathcal{L}=L(p;1)$.
The Betti numbers of these spaces are:
$$b^j(\SS^1\times\SS^2)=\begin{cases}
1 &\mbox{if } j=0,1,2,3 \\
0&\mbox{otherwise} 
\end{cases}
\mbox{\,\,\, and \,\,\,\,}
b^j(\SS^3\mbox{ or }\mathcal{L})=\begin{cases}
1 &\mbox{if } j=0,3 \\
0&\mbox{otherwise} 
\end{cases}.$$

We do know that $b^0(M)=b^4(M) = 1$ and $b^1(M)=b^3(M)=1+R-L$.
Thus, the only group in the sequence \eqref{eq:MV} whose rank we do not know is $H^2(M)$. We proceed by dimension count.
Let $Q$ be the number of  connected components of $Z$ diffeormorphic to $\SS^1\times\SS^2$.  Taking the alternating sum 
of the dimensions of the groups in the Mayer-Vietoris sequence \eqref{eq:MV}, we have
\begin{align*}
& 1-L+R-(1+R-L)+P-Q+b^2(M)-\left(\#\big\{\text{vertices of $X$}\big\}+2R-2L+P\right)+\\
&\hspace{3in}+Q-(1-R+L)+(2R-L)-R+1=0\\
\iff & b^2(M)=\#\big\{\text{vertices of $X$}\big\}+2R-2L,
\end{align*}
completing the proof.
\end{proof}

\begin{remark}\label{non-eg: euler}
An edge (1-dimensional face) of the orbit space $X$ of a toric origami manifold $M$ is 
either is a loop or has two end vertices. In the first case the edge is the moment image 
of a 2-torus, in the second it is the moment image of a sphere, with the end vertices 
being the image of the north and south poles of that sphere. As a consequence, $X$ 
can never have exactly one vertex, and $M$ can never have exactly one fixed point.
Thus, the Euler characteristic of a toric origami manifold cannot be equal to $1$.

The manifold $\C P^2 \# (\SS^1\times\SS^3)$, made up of the building blocks mentioned in Remark~\ref{rmk:type L}, has Euler characteristic 
$$\chi\left( \C P^2 \# (\SS^1\times\SS^3) \right)=\chi(\C P^2)+\chi(\SS^1\times\SS^3)-\chi(\SS^4)=3+0-2=1$$
and therefore does not admit a toric origami structure.
\end{remark}

\begin{remark}
The second Betti number of a toric origami manifold bears a resemblance to that of a toric symplectic manifold.  We have
just seen that for a $4$-dimensional toric origami manifold $M$, setting $\ell=1+R-L$,
$$
b^2(M) = \# \big\{\mbox{vertices in } M/\T \big\} -2+2\ell\ .
$$
For a toric symplectic manifold $Y$ of dimension $2n$,
\begin{equation}\label{eq:vertices classic}
b^2(Y) = \# \big\{ \mbox{facets in } Y/\T \big\} -n\ .
\end{equation}
In dimension $4$, we can rewrite \eqref{eq:vertices classic} as
$$b^2(Y)= \# \big\{ \mbox{vertices in } Y/\T \big\} -2\ .$$
Thus, these two descriptions are the same, up to a correction for the rank $\ell$ of $\pi_1(M/\T)$.
\end{remark}

\begin{example}
Let $M_1$ be the toric origami manifold described in Figure~\ref{fig:4dim example}, left and center. This information completly determines the template and therefore the manifold. Its template graph has 4 vertices and 4 edges, and the manifold has 4 fixed points. Using Theorem~\ref{toric origami dim4} we conclude that the Betti numbers of this manifold are:
$$b^0(M_1)=b^1(M_1)=b^3(M_1)=b^4(M_1)=1\mbox{ and } b^2(M_1)=4.$$

\begin{center}
\begin{figure}[h]
\includegraphics[height=1in]{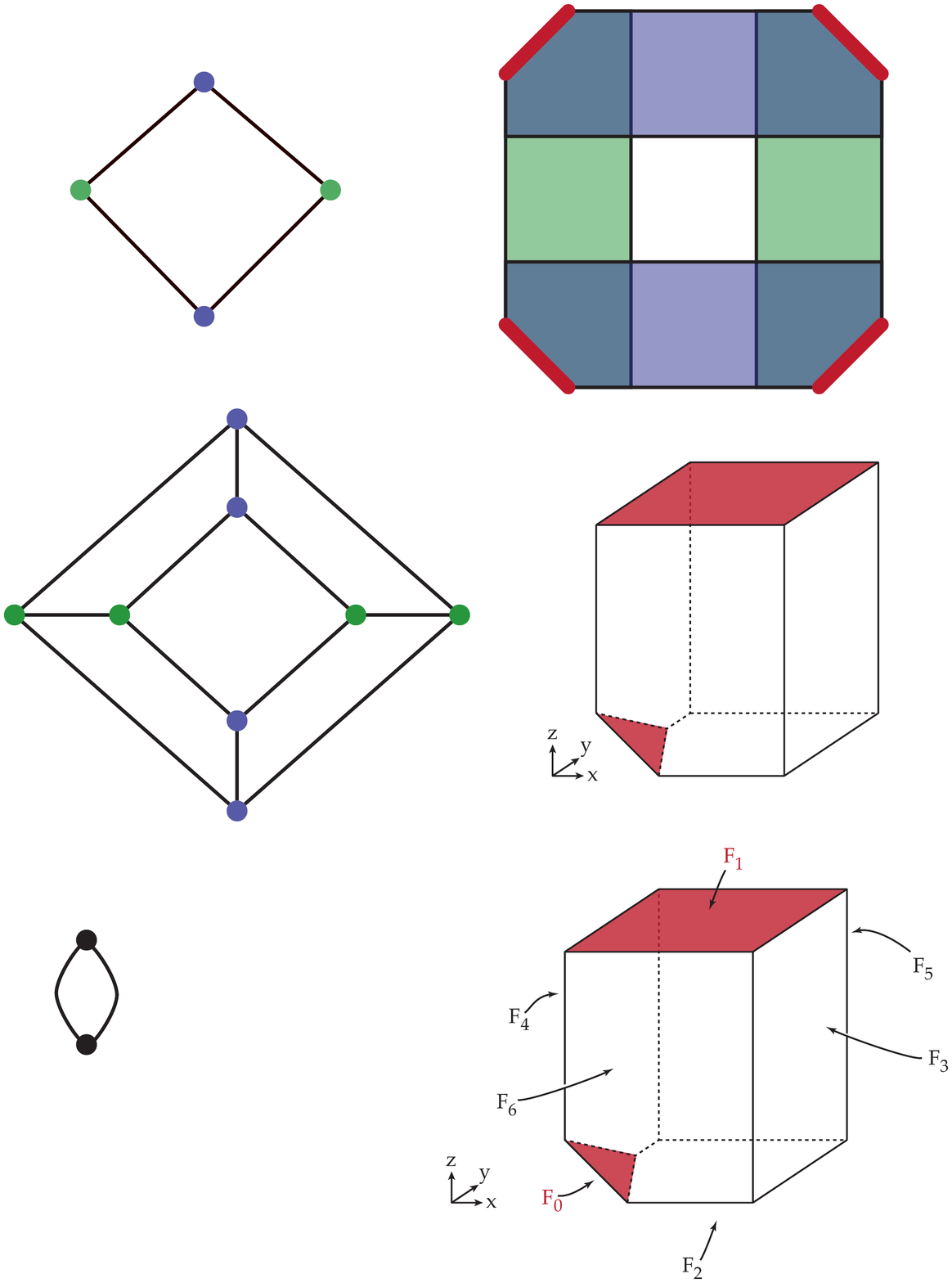} \hskip 0.3in
\includegraphics[height=1in]{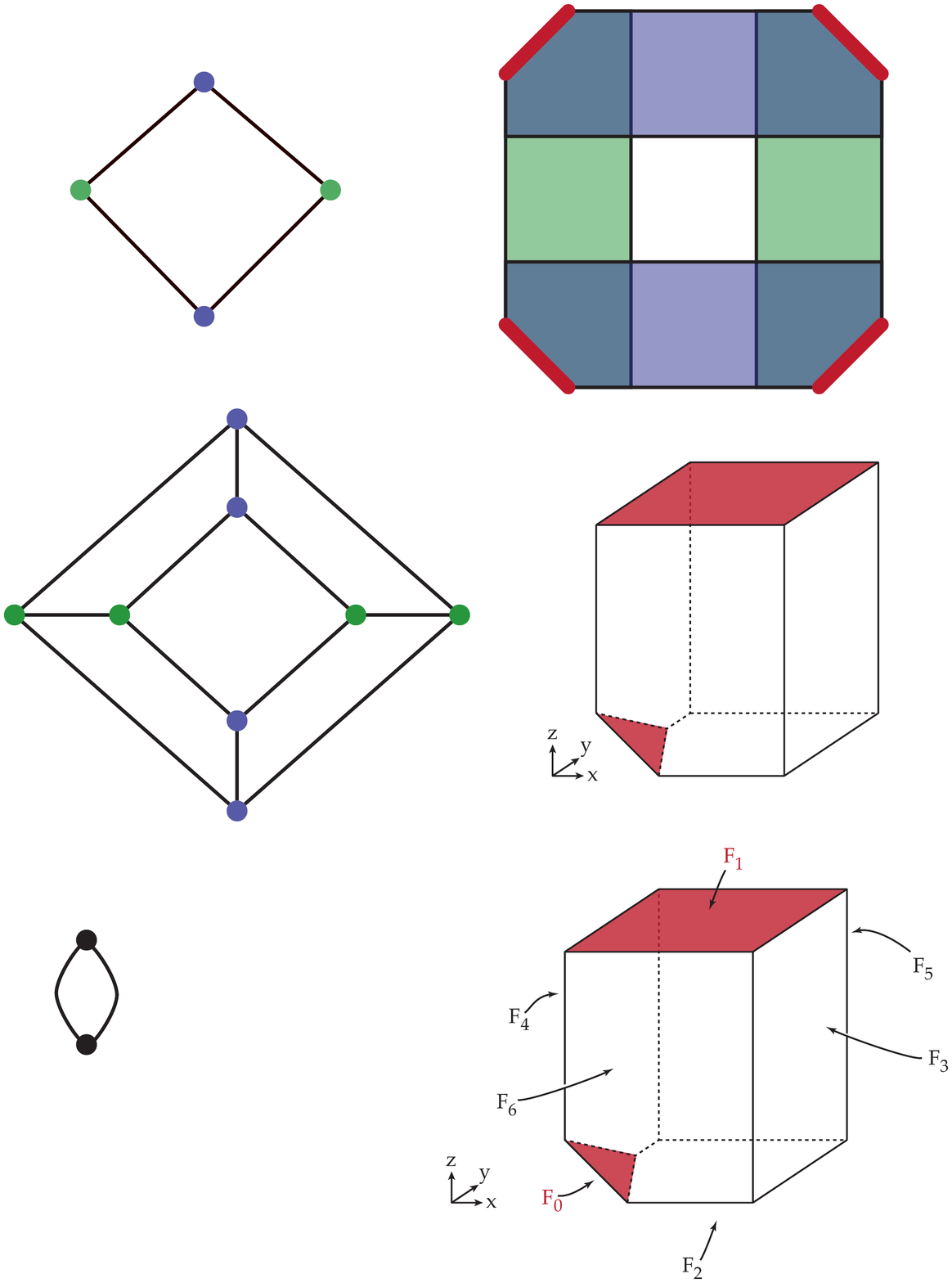} 
\caption{{\bf Left}: the template graph of a toric origami manifold $M_1$.\\ 
{\bf Center}: the moment image of the toric origami manifold $M_1$.\\ 
{\bf Right}: the template graph of a toric origami manifold $M_2$ obtained from taking two copies of $M_1$ 
and gluing their orbit spaces along 4 pairs of non-folded facets.}
\label{fig:4dim example}
\end{figure}
\end{center}

\noindent The orbit space of $M_1$ has 4 non-folded facets, each corresponding to a symplectic 2-sphere embeded in $M_1$.  Let $M_2$ be the toric origami manifold obtained by taking two copies of $M_1$ and gluing them together along each of the 4 pairs of symplectic 2-spheres with the same moment image. The resulting template graph is on the right hand side of Figure~\ref{fig:4dim example} and has 8 vertices and 12 edges. The vertices and edges that appear in each of the two concentric square rings of this template graph correspond to the two copies of $M_1$, the remaining 4 edges in the template graph correspond to the new connected components of the fold. The manifold $M_2$ thus created has no fixed points. Using Theorem~\ref{toric origami dim4} we obtain its Betti numbers:
$$b^0(M_2)=b^4(M_2)=1, \,\, b^1(M_2)=b^3(M_2)=5, \,\,b^2(M_2)=8.$$
\end{example}

\begin{example}
We now turn to a higher dimensional example for which the computations are still tractable and for which we can obtain all the Betti numbers. Let $M$ be obtained from two copies of the manifold examined in Example~\ref{ex:truncated cube}, glued together along the two agreeing pairs of facets marked in red.

\begin{center}
\begin{figure}[h]
\includegraphics[height=1.8in]{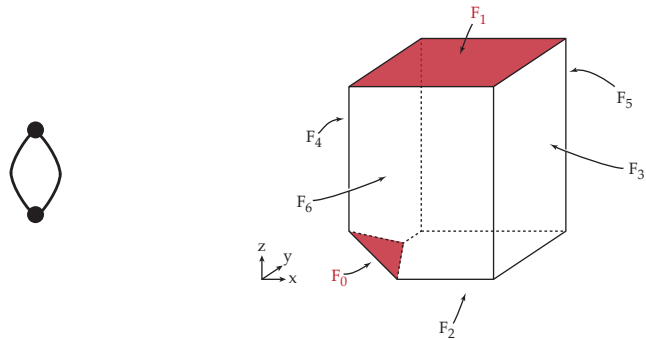} \hskip 0.8in
\includegraphics[height=1.8in]{truncatedcubewithlabels} 
\caption{Left: the template graph of the manifold $M$. Right: each vertex of the template correponds to a copy of the toric symplectic manifold with moment image a truncated cube (the same as in Figure~\ref{fig:truncated cube}). One edge of the template graph corresponds to gluing together the pair of facets $F_0$, the other edge to gluing togehter the pair of facets $F_1$.}
\label{fig:double truncated cube}
\end{figure}
\end{center}

Most of the terms in the Mayer Vietoris sequence with integer coefficients \eqref{eq:MV} are known, the $\bigoplus_{i=1}^L H^*(A_i)$ terms from Example~\ref{ex:truncated cube} and the $H^*(Z)$ terms from direct computation. Indeed, $Z$ is the disjoint union $Z=\SS^5 \sqcup  \left(\SS^2\times\SS^2\times\SS^1\right)$, the first with moment moment image the facet $F_0$ and the second with moment image the facet $F_1$ in Figure~\ref{fig:double truncated cube}, and therefore:
$$H^k(Z;\Z)=\begin{cases} 
\Z^2 &\mbox{for } k=0,2,3,5 \\ 
\Z & \mbox{for } k= 1,4.
\end{cases} $$
Furthermore, we know that $H^0(M;\Z)=H^6(M;\Z)=\Z$ because $M$ is a 6-dimensional connected manifold and that $H^1(M;\Z)=\Z$ and $H^5(M;\Z)=\Z$ because $\pi_1(M)=\Z$.
Taking an alternating sum of the ranks of the groups in the 
sequence \eqref{eq:MV}, we obtain the remaining Betti numbers of $M$:
$$b^0(M)=b^1(M)=b^3(M)=b^5(M)=b^6(M)=1 \mbox{ and } b^2(M)=b^4(M)=2.$$
\end{example}



\end{document}